\documentclass[10pt]{amsart}
\usepackage[utf8]{inputenc}
\usepackage[T1]{fontenc}
\usepackage{amsmath,amssymb,amsthm,mathtools,mathrsfs}
\usepackage{enumitem}
\usepackage{xcolor}

\usepackage[hypertexnames=false,hidelinks]{hyperref}
\usepackage[a4paper,hmargin=1.5in,vmargin=1in]{geometry}

\newcommand{\RR}{\mathbb R}
\newcommand{\SSS}{\mathbb S}
\newcommand{\HHH}{\mathbb H}
\newcommand{\QQ}{\mathbb Q}
\newcommand{\ip}[2]{\langle #1,#2\rangle}
\newcommand{\tr}{\operatorname{tr}}
\newcommand{\diver}{\operatorname{div}}
\newcommand{\Id}{\operatorname{Id}}
\newtheorem{theorem}{Theorem}[section]
\newtheorem{maintheorem}{Theorem}

\newtheorem{lemma}[theorem]{Lemma}
\newtheorem{proposition}[theorem]{Proposition}
\newtheorem{corollary}[theorem]{Corollary}
\theoremstyle{remark}
\newtheorem{remark}[theorem]{Remark}
\title[An Abresch--Rosenberg type tensor]{An Abresch--Rosenberg type tensor and a gap theorem for CMC hypersurfaces in $\SSS^n\times\RR$ and $\HHH^n\times\RR$}
\author{Hil{\'a}rio Alencar}
\address{Instituto de Matem{\'a}tica, Universidade Federal de Alagoas, 57072-900, Macei{\'o}, AL, Brazil}
\email{hilario@mat.ufal.br}
\author{M\'arcio Batista}
\address{CPMAT -- Instituto de Matem{\'a}tica, Universidade Federal de Alagoas, 57072-900, Macei{\'o}, AL, Brazil}
\email{mhbs@mat.ufal.br}
\author{Harold Rosenberg}
\address{13 rue du Faubourg Montmartre, 75009 Paris, France}
\email{rosenbergharold@gmail.com}
\subjclass{Primary 53C42; Secondary 53A10, 53C24}
\keywords{Abresch--Rosenberg tensor; constant mean curvature; gap theorem; Okumura inequality; rigidity; rotational hypersurfaces; Simons identity}
\date{\today}
\begin{document}
\begin{abstract}
We construct a symmetric trace-free tensor for constant-mean-curvature hypersurfaces in $\SSS^n\times\RR$ and $\HHH^n\times\RR$ that extends the Abresch--Rosenberg tensor. Within its natural class, it is characterized by the linearity of its Codazzi defect, and its complete connected zero set consists exactly of the rotational CMC spheres. An exact integrated Simons identity yields explicit gap theorems, valid for every $h>1$ in $\HHH^n\times\RR$ and every $h>(n\int_0^{\pi/2}\sin^{n-1}r\,dr)^{-1}$ in $\SSS^n\times\RR$. We compare the pinching thresholds with the pointwise algebraic level that implies convexity. In $\HHH^n\times\RR$ the explicit pinching conditions imply strict convexity, so there the method gives an independent proof with explicit constants. In $\SSS^n\times\RR$ the pinching region exceeds the convexity level for $n\in\{3,4\}$ and every admissible $h$, and for $n\geq5$ whenever $h^2\geq n/(6(n+2))$; there the gap theorem does not require convexity to be known in advance. We also determine second-order large-$h$ asymptotics of the thresholds.
\end{abstract}
\maketitle

\section{Introduction}
Hopf's holomorphic quadratic differential provides a fundamental rigidity mechanism for constant-mean-curvature (CMC) two-spheres. In the products $\SSS^2\times\RR$ and $\HHH^2\times\RR$, Abresch and Rosenberg replaced it by a holomorphic differential adapted to the ambient geometry \cite{AR}. The associated symmetric trace-free tensor was studied through Simons-type equations in \cite{Batista}. For a CMC surface with normalized mean curvature $h$, shape operator $A$, and vertical tangent field $T$, it takes the form
\begin{equation}\label{old}
S_2=2h(A-h\Id)-c\left(T\otimes T-\frac{|T|^2}{2}\Id\right),\qquad c\in\{-1,1\}.
\end{equation}
Its Codazzi property is special to dimension two. The holomorphic-differential approach was also extended to surfaces with parallel mean-curvature vector in higher-dimensional ambient products \cite{ACT,FR}; in those works the surface itself still has dimension two. Espinar and Trejos \cite{ET} showed that, on $H$-surfaces in the homogeneous spaces $\mathbb E(\kappa,\tau)$ with $\tau\neq0$, the Abresch--Rosenberg shape operator forms a Codazzi pair; they derived a Simons-type equation and a gap theorem. This is again a two-dimensional setting, in which the Codazzi property is available. For submanifolds of arbitrary dimension $m\geq3$ with parallel mean curvature vector in $M^n(c)\times\RR$, Fetcu and Rosenberg \cite{FR13} proved a Simons-type formula for the second fundamental form and its traceless part. That tensor carries no correction by the vertical field, and their gap theorems conclude that the submanifold is totally umbilical in a slice $M^{m+1}(c)$. It does not vanish on the rotational CMC spheres, which do not lie in slices, and therefore cannot characterize them. More recently, Fetcu and Nardulli \cite{FN} computed Simons-type equations for parallel-mean-curvature submanifolds of complex space forms and of their products with $\RR$, using the complex, respectively cosymplectic, structure; their tensors are built from the shape operator of a parallel normal field, without an Abresch--Rosenberg correction. We are not aware of a tensor adapted to the rotational CMC spheres of $\QQ^n_c\times\RR$ for $n\geq3$.

Rotational CMC spheres, on the other hand, exist in $\QQ_c^n\times\RR$ in every dimension $n\geq2$, where $\QQ_1^n=\SSS^n$ and $\QQ_{-1}^n=\HHH^n$. Their geometry and isoperimetry are discussed in \cite{HH,Pedrosa,LEN}. We use the explicit profile from \cite{LEN} to construct a dimension-dependent replacement of \eqref{old}. This choice matters: the literal trace-free extension of \eqref{old}, with $|T|^2/2$ replaced by $|T|^2/n$, does not vanish on the rotational spheres when $n>2$.

The central question is whether the two-dimensional tensor can be replaced by one adapted to the rotational model while retaining a differential structure useful for rigidity. We answer this within the class of tensors in \eqref{Bdef}: the model coefficient is also characterized by cancellation of the tensor-independent part of the Codazzi defect. The resulting defect is linear in the tensor and can therefore be retained, and estimated, in an integrated Simons identity. This construction and characterization, together with the classification of the complete zero set, provide the common mechanism for the gap theorems below.

\subsection*{The adapted tensor and its differential identity}
Throughout, $h=\tr A/n>0$. We assume the admissible range
\begin{equation}\label{range}
h>0\quad(c=1),\qquad nh>n-1\quad(c=-1).
\end{equation}
The second inequality is the existence range of the compact hyperbolic-product reference sphere. Section \ref{profile} defines a smooth scalar function $k_{n,c,h}:[0,1]\to\RR$ solely from its profile. For an arbitrary CMC immersion of the same mean curvature, put
\begin{equation}\label{Bdef}
B=\mathscr S_{n,c,h}:=2h(A-h\Id)-k_{n,c,h}(|T|^2)
\left(T\otimes T-\frac{|T|^2}{n}\Id\right).
\end{equation}

\begin{maintheorem}\label{main-equality}
If $n=2$, then $k_{2,c,h}\equiv c$ and $B=S_2$ on every CMC surface. For every $n\geq2$, the rotational sphere in the range \eqref{range} satisfies $B=0$. Conversely, any connected, complete, two-sided CMC immersion $x:\Sigma^n\to\QQ_c^n\times\RR$ with $n\geq2$, mean curvature $h$ in \eqref{range}, and $B\equiv0$ is congruent to that rotational sphere.
\end{maintheorem}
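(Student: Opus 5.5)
We prove the three assertions in order, and the converse is where the work lies. For $n=2$ the function $k_{2,c,h}$ is defined in Section \ref{profile} from the profile of the rotational sphere. We compute it directly from the explicit profile in \cite{LEN}. For $n=2$ the principal curvatures of the rotational sphere are explicit in the height angle. Setting $\nu=\ip{N}{\partial_t}$, so that $|T|^2=1-\nu^2$, the Abresch--Rosenberg relation $(\lambda_1-\lambda_2)h=\frac{c}{2}|T|^2$ on these spheres gives $k_{2,c,h}\equiv c$. With $|T|^2/2=|T|^2/n$, the identity $B=S_2$ then holds by substituting into \eqref{Bdef}.

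For the second assertion, the rotational sphere has two principal curvatures. The first is $\lambda$, on the profile direction, which is parallel to $T$. The second is $\mu$, with multiplicity $n-1$, on the orthogonal directions. In the frame $\{T/|T|,e_2,\dots,e_n\}$ both $A-h\Id$ and $T\otimes T-\frac{|T|^2}{n}\Id$ are multiples of $\operatorname{diag}(n-1,-1,\dots,-1)$. The coefficients are $(\lambda-\mu)/n$ and $|T|^2/n$ respectively. So $B=0$ is equivalent to $2h(\lambda-\mu)=k_{n,c,h}(|T|^2)|T|^2$. This is precisely how $k$ is defined from the profile: on the sphere, $|T|^2$ is a monotone function of the profile parameter on each hemisphere, and $\lambda-\mu$ is expressed through it. The remaining check is smoothness at $|T|^2=0$, the poles, and at $|T|^2=1$, the equator. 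We take this as part of Section \ref{profile}.

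For the converse, assume $B\equiv0$. Then $A=h\Id+\frac{k(|T|^2)}{2h}(T\otimes T-\frac{|T|^2}{n}\Id)$. Wherever $T\neq0$, $T$ is a principal direction and its orthogonal complement is umbilic with principal curvature $\mu=h-\frac{k|T|^2}{2hn}$. We use the structure equations $\nabla_XT=\nu AX$ and $X(\nu)=-\ip{AX}{T}$. With these, $A$ is a function of $\nu$ alone along $\Sigma$.

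The first step is to show that the level sets of $\nu$ are totally umbilic in $\Sigma$ and are mapped into horizontal slices as umbilic spheres of $\QQ^n_c$. The Codazzi equation for $A$ in $\QQ^n_c\times\RR$ gives the derivative of $\mu$ along $T$, which is automatically consistent by the choice of $k$; this is the Codazzi-defect characterization. Applied in the umbilic directions, it forces these leaves to be umbilic with constant curvature on each leaf. The standard argument for hypersurfaces with a rank-one deviation from umbilicity, as in the classification of rotational hypersurfaces of do Carmo--Dajczer type, then shows that $x$ is rotational about a vertical axis on the open set where $T\neq0$ and $\nu\neq\pm1$.

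The profile curve then satisfies the same ODE as the sphere's profile, namely the CMC equation with mean curvature $h$. Its first integral is fixed by the relation $B=0$: the constant of integration is pinned to that of the sphere, excluding Delaunay-type unduloids and nodoids. Uniqueness for the ODE, together with connectedness and completeness, extends this to all of $\Sigma$. The set where $T=0$ or $\nu=\pm1$ has empty interior unless $A=h\Id$ on an open slice piece, and a slice piece is impossible for $h>0$. Analyticity of CMC hypersurfaces handles the rest, giving the rotational sphere.

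The main obstacle is this last step: showing that the integration constant is forced and that no open umbilic region or complete noncompact profile occurs. Both issues arise in the range \eqref{range}, especially in $\HHH^n\times\RR$ near $nh=n-1$. Our first attempt is to evaluate $B=0$ at the points where the profile becomes horizontal, that is, where $\nu=\pm1$. There $B=0$ forces umbilicity, and this selects the sphere's initial data.
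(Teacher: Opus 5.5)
Your first two parts are acceptable. For $n=2$ you invoke the vanishing of the Abresch--Rosenberg differential on the rotational spheres, whereas the paper computes $k_{2,c,h}\equiv c$ directly from $\int_0^r f_c=(1-f_c')/c$; your model computation for $B=0$ is the paper's. The gap is in the converse, at exactly the step you flag yourself. Nothing in your argument shows that $B\equiv0$ selects the sphere rather than another rotational CMC hypersurface, and the test you propose does not do it. A rotational CMC hypersurface about a vertical axis has the first integral $\rho f_c(r)^{n-1}=nh\int_0^r f_c^{n-1}+C$.
\begin{itemize}
\item For $C<0$ the profile has a horizontal point at some $r_*>0$, where $b=0$ and $a=nh$, so umbilicity there does exclude it.
\item For $C>0$ (unduloid type) one has $\rho>0$ along the whole profile. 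Then $T$ never vanishes, and evaluating $B=0$ where $\nu=\pm1$ says nothing.
\end{itemize}
Also, you only have a profile at all once the piece is known to be rotational about a single vertical axis. The do Carmo--Dajczer criterion you cite is a space-form statement. In $\QQ_c^n\times\RR$ you must still show two things: that the horizontal leaves are geodesic spheres (in $\HHH^n$ an umbilic hypersurface may be a horosphere or an equidistant hypersurface), and that their centers in different slices coincide.

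The missing idea is that $B=0$ fixes the constant pointwise on $\{0<|T|^2<1\}$ and yields concentricity at the same time. There $A|_{e_1^\perp}=b(s)\Id$ with $b(s)/\sqrt s=m(r(s))$, where $r(s)$ is the model radius. This gives three facts for the horizontal unit field $E=(e_1-t\xi)/\nu$.
\begin{itemize}
\item $\nabla^{\QQ_c}_XE=m(r(s))X$ for $X\perp e_1$: the leaves are umbilic with factor $m(r(s))$, which exceeds $1$ when $c=-1$.
\item $\nabla_{e_1}e_1=0$ gives $\bar\nabla_{e_1}E=0$.
\item $e_1t=\nu a$, with $t=nhw(r)$ and $a=nhw'(r)$, gives $e_1r=\nu$.
\end{itemize}
Hence $p=f_c'(r)x_0-f_c(r)E$ is constant. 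So the leaf where $|T|^2=s$ is the geodesic sphere of radius exactly $r(s)$ about a fixed point, and $|T|=nhw(d(x_0,p))$, which is the relation with $C=0$. The height then satisfies that $z\mp Z(r)$ is constant (this is Lemma \ref{local}). If you prefer your route: once the piece is rotational with leaf radius $r$, you have $b=t\,m(r)$, while $B=0$ gives $b=t\,m(r(s))$. Since $m$ is strictly decreasing, $r=r(s)$ at every point.

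The global step also needs more care. Your condition $T\neq0$, $\nu\neq\pm1$ is a single condition, since $|T|^2+\nu^2=1$. What must be excluded separately is $\nu\equiv0$, which would give $AT=0$, contradicting $AT=a(1)T$ with $a(1)>0$. After analytic continuation into the compact reference sphere $\mathcal S$, congruence follows because $\Sigma\to\mathcal S$ is a local isometry from a complete manifold, hence a covering. It has one sheet because $\mathcal S\cong\SSS^n$ is simply connected.
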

The converse permits immersed hypersurfaces without an embeddedness assumption. Its local part identifies the horizontal levels as concentric geodesic spheres; completeness enters only after analytic continuation has placed the entire immersion in one reference sphere. For $n=2$ it gives, in the range \eqref{range}, a direct proof of the part of the classification in \cite{AR} concerning complete surfaces with vanishing Abresch--Rosenberg differential.

For $n>2$ the adapted tensor generally is not Codazzi. Its defect, however, is linear in $B$, with no term independent of $B$ (Proposition \ref{defect}). Proposition \ref{canonical} characterizes $k_{n,c,h}$ by this cancellation among coefficients continuous on $[0,1]$ and of class $C^1$ on $(0,1]$. The cancellation is required as a formal identity for every $s\in(0,1]$, rather than only at values realized by one immersion. Thus the characterization concerns the prescribed class \eqref{Bdef}. The linear defect leads to the following exact integrated identity.

\begin{maintheorem}\label{main-identity}
Let $\Sigma^n$ be a closed, two-sided CMC hypersurface in the range \eqref{range}. Write $D(X,Y)=(\nabla_XB)Y-(\nabla_YB)X$. In a local orthonormal frame diagonalizing $B$, let $\beta_i$ be its eigenvalues and $K_{ij}$ the sectional curvature of the $ij$-plane. Then
\begin{equation}\label{main-int}
0=\int_\Sigma\left\{|\nabla B|^2+
\sum_{i<j}K_{ij}(\beta_i-\beta_j)^2-
\frac12|D|^2-|\diver B|^2\right\}.
\end{equation}
Here $|D|^2=\sum_{i,j,k}D_{ijk}^2$, with $D_{ijk}=\ip{D(e_i,e_j)}{e_k}$, is summed over all ordered index triples. The explicit formula for $D$ is \eqref{linearD}. Both $D$ and $\diver B$ are of order zero in $B$; see \eqref{divB}.
\end{maintheorem}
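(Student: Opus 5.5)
We will derive \eqref{main-int} from a Bochner--Simons computation for a symmetric trace-free tensor that is not assumed to be Codazzi, and then integrate over the closed hypersurface $\Sigma$. Since $B$ is trace free and $h$ is constant, $\tr B=0$. Writing $B_{ij;k}$ for the components of $\nabla B$, the defect is $D_{kij}=B_{ij;k}-B_{ik;j}$, where we write $D_{kij}=\ip{D(e_k,e_i)}{e_j}$ up to the index convention of the statement. The starting point is the pointwise identity
\[
\tfrac12\Delta|B|^2=|\nabla B|^2+\sum_{i,j}B_{ij}B_{ij;kk}.
\]
Rather than commuting derivatives twice in the classical way, we will write the Laplacian term in divergence form. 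This is what makes the identity exact: we never expand $D$, and we only use that $D$ and $\diver B$ are given by the explicit zero-order formulas \eqref{linearD} and \eqref{divB} of Proposition \ref{defect}.

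The key steps are as follows. First, we rewrite $B_{ij;kk}=B_{ik;jk}+D_{kij;k}$. Second, we commute the covariant derivatives in $B_{ik;jk}$ by the Ricci identity,
\[
B_{ik;jk}=B_{ik;kj}+\sum_m\bigl(B_{mk}R_{mijk}+B_{im}R_{mkjk}\bigr),
\]
and note that $B_{ik;kj}=(\diver B)_{i;j}$. Contracting with $B_{ij}$ and diagonalizing $B$ in a frame $\{e_i\}$, the curvature terms collapse to the standard Simons term $\sum_{i<j}K_{ij}(\beta_i-\beta_j)^2$. This uses the intrinsic sectional curvatures of $\Sigma$, which is why no Gauss equation is needed here. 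Third, we integrate over $\Sigma$; the divergence theorem gives
\[
\int B_{ij}(\diver B)_{i;j}=-\int|\diver B|^2,\qquad \int B_{ij}D_{kij;k}=-\int B_{ij;k}D_{kij}.
\]
By the antisymmetry of $D_{kij}$ in $k,j$, we have $B_{ij;k}D_{kij}=\frac12(B_{ij;k}-B_{ik;j})D_{kij}=\frac12|D|^2$ when summed over all ordered triples. Its contribution therefore enters with the sign $-\frac12|D|^2$. The integral of $\frac12\Delta|B|^2$ vanishes, which yields \eqref{main-int}.

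The computation itself is routine. The main obstacle is bookkeeping: matching signs and index conventions so that the $D$-term comes out as exactly $-\frac12|D|^2$ and the divergence term as $-|\diver B|^2$, consistently with the convention $D_{ijk}=\ip{D(e_i,e_j)}{e_k}$. We also need to check that the frame-dependent expression $\sum_{i<j}K_{ij}(\beta_i-\beta_j)^2$ is a globally defined smooth function. To settle this, we will express it invariantly as $\sum B_{ij}(B_{mk}R_{mijk}+B_{im}R_{mkjk})$ before diagonalizing, so that the integrand is well defined even where eigenvalues coalesce. Two-sidedness and closedness are used only to make $B$ globally defined and to justify the integration by parts.
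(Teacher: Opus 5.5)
Your proposal is correct and is essentially the paper's proof: Lemma \ref{weitzen} likewise writes $\Delta B_{jk}=\nabla_iD_{ijk}+\nabla^2_{ij}B_{ik}$, commutes once via the Ricci identity to produce $\nabla_j(\diver B)_k$ and the Simons curvature term, and integrates by parts using $D_{ijk}=-D_{jik}$ to obtain $-\frac12|D|^2-|\diver B|^2$. The identity itself holds for any smooth symmetric $2$-tensor on a closed manifold, so \eqref{linearD} and \eqref{divB} are not needed for \eqref{main-int}; what is needed is that $B$ is smooth on all of $\Sigma$, including where $T=0$ or $\nu=0$, which the paper obtains from Lemma \ref{regularity} via Proposition \ref{modelzero}.
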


\subsection*{Quantitative rigidity}
The next result makes all constants explicit and holds for every $h$ in \eqref{range} for which the constant $a_0$ below is positive. The numbers $a_0=a_0(n,c,h)$ and $a_1=a_1(n,c,h)$ are defined in \eqref{constants}; they are elementary expressions in $n$ and $h$.

\begin{maintheorem}\label{main-gap}
Let $n\geq3$, let $h$ be in \eqref{range} with $a_0(n,c,h)>0$, and put
\begin{equation}\label{xplus}
x_+(n,c,h)=2h^2\left(\sqrt{a_1^2+a_0/h^2}-a_1\right).
\end{equation}
If a connected, closed, two-sided CMC immersion $\Sigma^n\to\QQ_c^n\times\RR$ has normalized mean curvature $h$ and
\begin{equation}\label{pinch}
\sup_\Sigma|\mathscr S_{n,c,h}|\leq x_+(n,c,h),
\end{equation}
then it is congruent to the rotational CMC sphere of mean curvature $h$. Moreover $a_0>0$ whenever $h^2\geq8$, and
\[
\lim_{h\to\infty}\frac{x_+(n,c,h)}{h^2}=2\sqrt{\frac{n}{n-1}}\qquad(c=\pm1).
\]
\end{maintheorem}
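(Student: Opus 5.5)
The plan is to feed pointwise lower bounds into the integrated identity of Theorem \ref{main-identity}. The aim is to show that, under the pinching \eqref{pinch}, the integrand is pointwise bounded below by a nonnegative multiple of $|B|^2$, with the multiple strictly positive unless $B\equiv 0$. Theorem \ref{main-equality} then identifies $\Sigma$ with the rotational sphere.

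The first step is to estimate the two negative terms, $\frac12|D|^2$ and $|\diver B|^2$. By Proposition \ref{defect} and \eqref{linearD}, $D$ is linear in $B$ with coefficients built from $T$, $A$, $k$ and $k'$; \eqref{divB} gives $\diver B$ in the same form. So I expect a bound
\[
\tfrac12|D|^2+|\diver B|^2\le \gamma\,|T|^2|B|^2 ,
\]
with an explicit $\gamma=\gamma(n,c,h)$, using $|T|\le1$ and sup bounds for $k_{n,c,h}$ and $(1-s)k'_{n,c,h}$ on $[0,1]$.

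Next I would replace the naive bound $|\nabla B|^2\ge0$ by a Kato-type refinement. Since $B$ is symmetric and trace-free and its Codazzi defect is controlled, one can aim for
\[
|\nabla B|^2\ge\tfrac{n+2}{n}\,|\nabla|B||^2-(\text{terms in }|D|^2,|\diver B|^2).
\]
If this bookkeeping becomes too delicate, I would simply drop $|\nabla B|^2$; the constants would change, but not the method. I would make the final choice after seeing which version produces the stated $a_0,a_1$.

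The curvature term is handled by Gauss's equation in $\QQ_c^n\times\RR$:
\[
K_{ij}=c\bigl(1-|T_i|^2-|T_j|^2\bigr)+\lambda_i\lambda_j
\]
in the frame diagonalizing $B$. Here $\lambda_i=h+\bigl(\beta_i+k(|T|^2)(T_i^2-|T|^2/n)\bigr)/(2h)$, obtained by inverting \eqref{Bdef}. Substituting and applying Okumura's inequality
\[
\Bigl|\sum\beta_i^3\Bigr|\le\frac{n-2}{\sqrt{n(n-1)}}|B|^3
\]
to the cubic terms, I expect
\[
\sum_{i<j}K_{ij}(\beta_i-\beta_j)^2\ge |B|^2\Bigl(a_0'-\tfrac{a_1}{h^2}|B|\cdot(\ldots)-\tfrac{1}{4h^2}|B|^2\Bigr),
\]
or an analogous expression. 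Combined with the bound on $D$ and $\diver B$, the integrand should be at least
\[
|B|^2\Bigl(a_0-4a_1h\cdot\tfrac{|B|}{2h}\cdot\ldots-\tfrac{|B|^2}{4h^2}\Bigr)\ge0 .
\]
Setting $x=|B|$, the bracket is a quadratic in $x$ whose positive root is exactly $x_+=2h^2\bigl(\sqrt{a_1^2+a_0/h^2}-a_1\bigr)$. This is how I would define $a_0$ and $a_1$ in \eqref{constants}.

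The rigidity argument then runs as follows. Under \eqref{pinch} the integrand is nonnegative, so it vanishes identically. Where $B\neq0$ and $|B|<x_+$ this forces $B=0$, a contradiction. In the borderline case $|B|=x_+$ somewhere, the equality cases of Okumura's inequality and of the $D$-estimate must hold. I would show these force $T=0$ on an open set; the $D$ term has a factor $|T|^2$, and equality in the curvature estimate is incompatible with $T=0$, $B\ne0$ for a closed hypersurface. Alternatively, I would use $a_0>0$ strictly to absorb the borderline case. Either way $B\equiv0$, and Theorem \ref{main-equality} concludes.

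The asymptotic statements come from the explicit formulas. As $h\to\infty$, $k_{n,c,h}$ stays bounded (from its definition via the profile), so $a_1\to a_1^\infty$ and $a_0/h^2\to$ a limit, and the limit of $x_+/h^2$ follows by taking the limit in \eqref{xplus}. The claim $a_0>0$ for $h^2\ge8$ follows from bounding the $h$-independent negative contributions ($|c|\le1$, $|T|\le1$, bounds on $k$) by a multiple of $h^2$.

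The main obstacle is the first step: obtaining a sharp, explicit bound of $|D|^2$ and $|\diver B|^2$ in terms of $|T|^2|B|^2$ that matches the constants. This needs uniform control of $k_{n,c,h}$ and $k'_{n,c,h}$ near $s=1$, where $k$ is only $C^1$ on $(0,1]$.
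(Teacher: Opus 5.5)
Your outline matches the paper's: the identity \eqref{main-int}, a bound on the defect linear in $|B|^2$, a Gauss--Okumura lower bound for the curvature term, and the quadratic $P(x)=a_0-a_1x-x^2/(4h^2)$. The Kato refinement is unnecessary; the paper simply keeps $|\nabla B|^2\ge0$. There are, however, three gaps.

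\textbf{The Gauss step is wrong as written.} In a frame diagonalizing $B$, the shape operator is not diagonal. By \eqref{Bdef}, $A_{ij}=\frac{k}{2h}T_iT_j$ for $i\neq j$, so \eqref{gauss} gives $K_{ij}=c(1-T_i^2-T_j^2)+A_{ii}A_{jj}-A_{ij}^2$. Your formula $K_{ij}=c(\cdots)+\lambda_i\lambda_j$ drops $-\frac{k^2}{4h^2}T_i^2T_j^2$. It therefore overstates $\sum_{i<j}K_{ij}(\beta_i-\beta_j)^2$ by the nonnegative amount $\frac{k^2}{4h^2}\sum_{i<j}T_i^2T_j^2(\beta_i-\beta_j)^2$, so a lower bound for your expression is not a lower bound for the true term. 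If you keep $A_{ij}^2$, the $T_i^2T_j^2$ contributions cancel and you get the exact formula \eqref{curvexact}, which contains $-\frac{k^2s}{4h^2}|BT|^2$.

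\textbf{The stated constants are never derived.} The theorem concerns the specific $a_0,a_1$ of \eqref{constants}, built from $\mathcal K_{n,c}$ and $\sigma_{n,c}(h)$. Reaching them requires the explicit bounds of Lemma \ref{bounds}: $|k|\le\mathcal K_{n,c}$, $k\ge0$ for $c=1$, and $|sk'|\le\sigma_{n,c}(h)$, proved from the Riccati equation \eqref{riccati}. This is the real analytic input, and your proposal supplies none of it. Three smaller corrections:
\begin{itemize}
\item $k'$ enters \eqref{linearD} only through $sk'$, not $(1-s)k'$.
\item $\varepsilon$ comes from using $|\nu||T|\le\frac12$, not $|T|\le1$.
\item The obstacle you name is misplaced: $k_{n,c,h}$ is smooth on all of $[0,1]$ (Lemma \ref{regularity}). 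The $C^1((0,1])$ class only concerns the competitors in Proposition \ref{canonical}.
\end{itemize}

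\textbf{The endpoint case $\sup_\Sigma|B|=x_+$ is not handled.} Since $P(x_+)=0$, the strict positivity of $a_0$ cannot absorb it. Your claim that the equality cases force $T=0$ on an open set is unsupported, and it is not what is needed.

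The paper uses slack in the estimate of $-\frac{k^2s}{4h^2}|BT|^2$, exactly the term missing from your Gauss formula. That term was bounded by $-\frac{(n-1)\mathcal K^2}{4nh^2}|B|^2$. Keeping the difference gives $0\ge\int_\Sigma\{|\nabla B|^2+|B|^2(P(|B|)+\mathfrak e)\}$ with $\mathfrak e=\frac{(n-1)(\mathcal K^2-k^2s^2)}{4nh^2}$. This is strictly positive everywhere by Lemma \ref{bounds}(d), and $B\equiv0$ follows.

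Your remark that equality is incompatible with $T=0$, $B\neq0$ can be made into an alternative proof, but the logic must run the other way. The integrand is $\ge0$ with integral $0$, so it vanishes identically. Hence $\nabla B\equiv0$, $|B|$ is constant, and equality holds in every estimate at every point. At a maximum of the height function, $T=0$. There the terms $\frac{(n-1)\mathcal K^2}{4nh^2}$ in $a_0$ and $\frac{\mathcal K}{2h^2}\sqrt{(n-1)/n}$ in $a_1$ are pure slack unless $B=0$. Constancy of $|B|$ then gives $B\equiv0$.

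The remaining claims are routine once \eqref{constants} is available. The limit of $x_+/h^2$ follows from $\sigma=O(h^{-2})$ and $\varepsilon=O(1)$. Positivity of $a_0$ for $h^2\ge8$ reduces to the arithmetic bound $\varepsilon<3/2$ of Lemma \ref{a0-positive}.
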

Equivalently, $x_+=2a_0\big/\big(\sqrt{a_1^2+a_0/h^2}+a_1\big)$; this form avoids the subtraction of nearly equal numbers. The positivity of $a_0$ for $h^2\geq8$ is Lemma \ref{a0-positive}. The threshold $x_+$ estimates the curvature terms separately. Minimizing them jointly, using Okumura's inequality with one eigenvalue prescribed and its equality case \cite[Lemma (2.6)]{AdC}, with $k(s)$ relaxed to an interval independent of $s$, gives a threshold $x_{\rm opt}\geq x_+$ for which the same conclusion holds under the strict pinching $\sup_\Sigma|\mathscr S_{n,c,h}|<x_{\rm opt}$ (Theorem \ref{opt-gap}). For $n=2$ the corresponding gap theorem for the Abresch--Rosenberg tensor is due to Batista \cite{Batista}; for this reason Theorems \ref{main-gap}--\ref{main-asymp} are stated for $n\geq3$, while Theorems \ref{main-equality} and \ref{main-identity} hold for $n\geq2$. The condition $a_0>0$ is an explicit inequality in $n$ and $h$, so it can be checked directly for given values with $h^2<8$.

The pinching \eqref{pinch} forces strict convexity (Proposition \ref{convex-forced}). Consequently, the classification of de Lima, Manfio and dos Santos \cite[Theorem 17]{LMS} also gives the conclusion of Theorem \ref{main-gap}. Here the adapted tensor provides an independent integral proof and an explicit sufficient pinching condition. In $\SSS^n\times\RR$ the uniform convexity estimate also implies compactness under completeness.

\begin{corollary}\label{complete-sphere}
Let $n\geq3$ and let $h>0$ satisfy $a_0(n,1,h)>0$. A connected, complete, two-sided CMC immersion $\Sigma^n\to\SSS^n\times\RR$ with normalized mean curvature $h$ and $\sup_\Sigma|\mathscr S_{n,1,h}|\leq x_+(n,1,h)$ is congruent to the rotational CMC sphere of mean curvature $h$.
\end{corollary}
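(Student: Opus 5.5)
The plan is to reduce the corollary to Theorem \ref{main-gap}, and the only missing hypothesis there is closedness. The paper states that the pinching $\sup_\Sigma|\mathscr S_{n,1,h}|\le x_+(n,1,h)$ forces strict convexity (Proposition \ref{convex-forced}). I expect that argument to be pointwise and algebraic: $B=2h(A-h\Id)-k(|T|^2)(T\otimes T-\frac{|T|^2}{n}\Id)$, the function $k$ is bounded on $[0,1]$, and $|T|\le1$. Hence a bound on $|B|$ controls $|A-h\Id|$ up to the explicit correction, and this yields a lower bound on the smallest principal curvature that depends only on $n$ and $h$. So the first step is to extract from Proposition \ref{convex-forced} a uniform estimate $\lambda_{\min}(A)\ge\delta>0$ on $\Sigma$, with $\delta=\delta(n,h)$. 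This estimate is pointwise and so does not use compactness. If the proposition is stated only qualitatively, I would redo its algebra to make the constant explicit. Because $|B|\le x_+$ is a sup bound and the algebra is continuous on the compact set of admissible $(|T|^2,B)$, the positivity is automatically uniform.

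Next, use completeness together with the uniform convexity to show that $\Sigma$ is compact. In $\SSS^n\times\RR$ the sectional curvatures of the ambient space lie in $[0,1]$. By the Gauss equation, the sectional curvature of $\Sigma$ on the $ij$-plane is
\[
K_{ij}=\bar K_{ij}+\lambda_i\lambda_j\ge \delta^2 ,
\]
because $\bar K_{ij}\ge0$ and $\lambda_i,\lambda_j\ge\delta$. Since $\Sigma$ is complete with sectional, and in particular Ricci, curvature bounded below by $\delta^2>0$, the Bonnet--Myers theorem shows that $\Sigma$ is compact with diameter at most $\pi/\delta$. It has no boundary, so it is closed. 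This is exactly where $c=1$ matters: in $\HHH^n\times\RR$ the ambient curvature is negative and this argument would need $\delta^2>1$, which explains why the corollary is stated only for the sphere product.

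Finally, $\Sigma$ is now a connected, closed, two-sided CMC immersion with $a_0(n,1,h)>0$ and the pinching \eqref{pinch}. Theorem \ref{main-gap} therefore applies and gives congruence with the rotational sphere. The main obstacle is the first step, namely making sure that the convexity from Proposition \ref{convex-forced} is uniform rather than merely strict, since Bonnet--Myers needs a positive lower bound on curvature. I would handle this by checking that the convexity argument bounds the principal curvatures through $\sup|B|$ and $\sup_{[0,1]}|k|$ only. Bonnet--Myers needs only the lower bound on the principal curvatures, so no control of the Hessian or higher derivatives is needed.
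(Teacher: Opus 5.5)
Your proposal is correct and follows the paper's proof essentially step for step. The bound $\lambda_{\min}(A)\geq h-\eta_0x_+/(2h)-\varkappa_1=:\delta>0$ from Proposition \ref{convex-forced} is pointwise and already uniform, the ambient Gauss term is nonnegative for $c=1$, Bonnet--Myers gives compactness, and Theorem \ref{main-gap} finishes. One refinement: to bound every sectional curvature, use that for arbitrary orthonormal $X,Y$ the determinant $\ip{AX}{X}\ip{AY}{Y}-\ip{AX}{Y}^2$ of $A$ restricted to their plane is at least $\delta^2$, as the paper does, rather than the formula $K_{ij}=\bar K_{ij}+\lambda_i\lambda_j$, which covers only the principal planes.
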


In $\HHH^n\times\RR$ the same argument only gives $K_\Sigma\geq\delta^2-1$ for a positive lower bound $\delta$ of the principal curvatures. Under a stronger pinching we obtain a uniform statement in both products.

\begin{corollary}\label{complete}
Let $n\geq3$ and $h\geq2\sqrt2$. A connected, complete, two-sided CMC immersion $\Sigma^n\to\QQ_c^n\times\RR$ with normalized mean curvature $h$ and $\sup_\Sigma|\mathscr S_{n,c,h}|\leq h^2/4$ is congruent to the rotational CMC sphere of mean curvature $h$.
\end{corollary}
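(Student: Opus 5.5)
The approach is to reduce Corollary \ref{complete} to Theorem \ref{main-gap}: first show that the pinching $\sup_\Sigma|\mathscr S_{n,c,h}|\leq h^2/4$ lies below the threshold $x_+(n,c,h)$, and then show that the immersion is compact. Two points have to be checked.

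\emph{Admissibility.} Since $h^2\geq8$, Lemma \ref{a0-positive} gives $a_0(n,c,h)>0$, and $h$ lies in \eqref{range} for both $c=\pm1$ (note $h\geq2\sqrt2>1>(n-1)/n$). I then need the explicit bound $x_+(n,c,h)\geq h^2/4$ for all $h^2\geq8$ and all $n\geq3$. Using the stable form $x_+=2a_0/(\sqrt{a_1^2+a_0/h^2}+a_1)$ and the explicit expressions \eqref{constants}, I would show monotonicity of $x_+/h^2$ in $h$, or bound $a_0$ below and $a_1$ above directly. The limit $2\sqrt{n/(n-1)}>2$ suggests there is ample room, since $1/4$ is far below it. This is an elementary but possibly tedious inequality check at $h^2=8$ together with a monotonicity argument.

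\emph{Compactness.} Next I use Proposition \ref{convex-forced} quantitatively. From $|B|\leq h^2/4$ and $|k(s)|$ bounded (by an interval independent of $s$, as in the discussion of $x_{\rm opt}$), each eigenvalue of $A-h\Id$ satisfies
\[
2h|\lambda_i-h|\leq|B|+C\,|T|^2\leq h^2/4+C,
\]
so that $\lambda_i\geq h-h/8-C/(2h)\geq\delta>0$ uniformly, for an explicit $\delta$ comparable to $h$. By the Gauss equation, $K_{ij}=\lambda_i\lambda_j+\bar K_{ij}$, where the ambient sectional curvature satisfies $\bar K_{ij}\geq-1$ in $\HHH^n\times\RR$ and $\bar K_{ij}\geq 0$ in $\SSS^n\times\RR$. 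Hence $K_\Sigma\geq\delta^2-1$. The point of the stronger pinching is that $\delta^2>1$, which gives a uniform positive lower bound on sectional curvature, hence on Ricci curvature. Myers' theorem then shows that the complete immersion $\Sigma$ is compact; it is also connected and two-sided. In $\SSS^n\times\RR$ one could instead invoke Corollary \ref{complete-sphere}, but the uniform argument covers both cases at once.

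\emph{Conclusion and main obstacle.} Once $\Sigma$ is known to be closed, Theorem \ref{main-gap} applies with $\sup|\mathscr S|\leq h^2/4\leq x_+$, and $\Sigma$ is congruent to the rotational sphere. The main obstacle is making the constants explicit: I need the bound on $|k_{n,c,h}|$ on $[0,1]$ in terms of $h$, and a verification that $\delta>1$ and $x_+\geq h^2/4$ already hold at $h=2\sqrt2$. I would first try the crude bound $|k|\leq$ (its value at $s=1$), read off from the profile in Section \ref{profile}, and check both inequalities numerically at the endpoint before proving monotonicity in $h$.
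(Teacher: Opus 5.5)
Your proposal is essentially the paper's proof: the bound $|A-h\Id|\le |B|/(2h)+|k|/(2h)\le h/8+1/h\le h/4$ gives principal curvatures at least $3h/4$. Hence $K_\Sigma\ge 9h^2/16-1>0$; for a non-principal plane, use that the determinant of $A$ restricted to the plane is at least $(3h/4)^2$, since your formula $K_{ij}=\lambda_i\lambda_j+\bar K_{ij}$ holds only on principal planes. Bonnet--Myers then gives compactness, and Theorem~\ref{main-gap} applies.

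Two simplifications. First, the coefficient bound you need is the $h$-independent $|k|\le\mathcal K_{n,c}<2$ of Lemma~\ref{bounds}(d). Your fallback $|k|\le|k(1)|$ fails for $c=1$: there $k$ is decreasing, so $k(1)$ is its minimum. Second, $h^2/4<x_+$ needs no monotonicity in $h$. Since $P(x)=a_0-a_1x-x^2/(4h^2)$ is decreasing with positive zero $x_+$, it suffices that $P(h^2/4)>0$, and this follows in one line from $a_0>nh^2-n-\frac5{16}$ and $a_1<\frac{n-1}2+\frac18$ in Lemma~\ref{a0-positive}.
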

For $c=1$, Corollary \ref{complete} is contained in Corollary \ref{complete-sphere}. Both corollaries follow by first obtaining compactness from a uniform positive lower bound for sectional curvature and then applying Theorem \ref{main-gap}.

The constants $a_0$ and $a_1$ come from uniform bounds on $k$ and $sk'$ (Lemma \ref{bounds}), and they require $h$ to be moderately large. Using instead the sign structure of the model coefficient (Lemma \ref{model-signs}), an exact block computation of the Codazzi defect (Lemma \ref{sharp-defect}) and the sectional curvatures of the reference sphere, we obtain a second gap theorem on a much larger range of $h$. Put
\[
h_{\SSS}(n)=\Big(n\int_0^{\pi/2}\sin^{n-1}r\,dr\Big)^{-1}.
\]
The functions $q_{n,c,h}(s)$ and $\mathcal P_{n,c,h}(s)$ in \eqref{qPdef} below depend only on the rotational profile. In particular, the following threshold is an effective one-variable quantity, without a large-mean-curvature assumption.

\begin{maintheorem}\label{main-sharp}
Let $n\geq3$ and assume
\begin{equation}\label{gap-range}
h>h_{\SSS}(n)\quad\text{for } c=1,\qquad h>1\quad\text{for } c=-1.
\end{equation}
Then $q_{n,c,h}(s)>0$ for every $s\in[0,1]$ and
\begin{equation}\label{radius}
R_*(n,c,h)=\min_{0\leq s\leq1}2h^2\left(\sqrt{\mathcal P_{n,c,h}(s)^2+q_{n,c,h}(s)/h^2}-\mathcal P_{n,c,h}(s)\right)>0.
\end{equation}
Every connected, closed, two-sided CMC immersion $\Sigma^n\to\QQ_c^n\times\RR$ with normalized mean curvature $h$ satisfying
\begin{equation}\label{pinch-sharp}
\sup_\Sigma|\mathscr S_{n,c,h}|<R_*(n,c,h)
\end{equation}
is congruent to the rotational CMC sphere of mean curvature $h$.
\end{maintheorem}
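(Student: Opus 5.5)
We prove Theorem \ref{main-sharp} by running the integrated identity \eqref{main-int} of Theorem \ref{main-identity} with $s$-dependent estimates in place of the uniform constants $a_0,a_1$. The plan has three steps: bound the curvature term from below, bound the defect terms from above, and compare the two under the pinching \eqref{pinch-sharp}.

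\textbf{Step 1: lower bound for the curvature term.} By the Gauss equation in $\QQ_c^n\times\RR$,
\[
K_{ij}=c\bigl(1-|T_i|^2-|T_j|^2\bigr)+\lambda_i\lambda_j ,
\]
where $\lambda_i$ are the principal curvatures and $T_i=\ip{T}{e_i}$. In a frame diagonalizing $B$ we express $A=h\Id+\frac{1}{2h}\bigl(B+k(|T|^2)(T\otimes T-\frac{|T|^2}{n}\Id)\bigr)$. Then $\sum_{i<j}K_{ij}(\beta_i-\beta_j)^2$ becomes $2h^2q$-type terms at order $|B|^2$, plus a linear correction of size $\mathcal P\,|B|^3$, plus a nonnegative remainder. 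Here $s=|T|^2$ is evaluated pointwise.

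The point of the definitions \eqref{qPdef} is exactly this: $q_{n,c,h}(s)$ should be the minimal curvature coefficient obtained from the sectional curvatures of the reference sphere at height parameter $s$. Okumura's inequality \cite[Lemma (2.6)]{AdC} controls the cubic term $\sum\beta_i\beta_j(\beta_i-\beta_j)^2$.

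\textbf{Step 2: upper bound for the defect terms.} We use the exact block computation of Lemma \ref{sharp-defect}. Since $D$ and $\diver B$ are linear in $B$ by Proposition \ref{defect} and \eqref{divB}, this gives
\[
\tfrac12|D|^2+|\diver B|^2\le |\nabla B|^2+\text{(terms absorbed into } q,\mathcal P\text{)}.
\]
Alternatively, we absorb the defect via a Kato-type inequality $|\nabla B|^2\ge$ (defect contribution) and put the leftover into $q$. The sign information of Lemma \ref{model-signs} ($k$ and $sk'$ having fixed signs) is what makes the block estimate sharp.

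\textbf{Step 3: comparison under the pinching.} Combining Steps 1 and 2 gives
\[
0\ge\int_\Sigma |B|^2\bigl(q(s)-2\mathcal P(s)|B|/(2h^2)\cdot h^2-\tfrac{1}{4h^2}|B|^2\cdot\ldots\bigr),
\]
i.e. a pointwise quadratic in $x=|B|$ of the form $q-2\mathcal P x/(2h^2)\cdot\ldots$. Its positive root is precisely
\[
2h^2\Bigl(\sqrt{\mathcal P^2+q/h^2}-\mathcal P\Bigr).
\]
If $\sup|B|<R_*$, the integrand is nonnegative, and it is strictly positive wherever $B\neq0$. Hence $B\equiv0$, and Theorem \ref{main-equality} then yields congruence with the rotational sphere.

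\textbf{Main obstacle: positivity of $q$.} The hardest step is proving $q_{n,c,h}(s)>0$ on $[0,1]$ in the range \eqref{gap-range}, which is also what gives $R_*>0$, since $[0,1]$ is compact and $q$ is continuous. I would first reduce $q$ via the profile to the sectional curvatures of the reference sphere. There, for $c=1$, positivity should amount to the sphere not reaching the equator $r=\pi/2$ of $\SSS^n$. That condition is equivalent to $h>h_{\SSS}(n)$ through the first integral of the profile ODE, $nh\int_0^{r}\sin^{n-1}=\sin^{n-1}r\cdot(\text{vertical component})$. For $c=-1$, the analogous monotonicity comparison reduces to $h>1$.
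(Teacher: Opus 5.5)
Your architecture matches the paper's: integrate \eqref{main-int}, bound the zeroth-order defect by Lemma \ref{sharp-defect}, obtain a pointwise quadratic $q-\mathcal P u-u^2/(4h^2)$ whose positive root is the expression in \eqref{radius}, conclude $B\equiv0$, and apply Theorem \ref{main-equality}. That last step is fine. The gaps are in the two places where the actual work lies.

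First, the lower bound \eqref{polynomial-lower} with $q$ exactly as in \eqref{qPdef} is asserted rather than derived, and your sketch misses its key input. The identity \eqref{main-int} is written in a frame diagonalizing $B$. In that frame $A$ is not diagonal: its off-diagonal entries are $\frac{k}{2h}T_iT_j$. So $K_{ij}\neq c(1-T_i^2-T_j^2)+\lambda_i\lambda_j$ there. Also, $\sum_{i<j}\beta_i\beta_j(\beta_i-\beta_j)^2=-|B|^4$ exactly; Okumura's inequality is needed for $\tr(B^3)$ instead. Carrying out \eqref{curvexact} and regrouping as in \eqref{exact-gauss} leaves the term $\zeta\,(su^2+n|BT|^2)$ with $\zeta=\frac{kb}{2h}-c$. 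The bound $0\le|BT|^2\le\frac{n-1}{n}su^2$ can only be applied at one endpoint, so you need the sign of $\zeta$. This is the actual role of Lemma \ref{model-signs}:
\begin{itemize}
\item for $c=1$, $\zeta<0$ yields the coefficient $nK_{\rm mer}$; this sign requires $r_0<\pi/2$, i.e.\ $h>h_{\SSS}(n)$;
\item for $c=-1$, $\zeta>0$ yields $K_{\rm mer}+(n-1)K_{\rm orb}$.
\end{itemize}
You assign Lemma \ref{model-signs} to the defect estimate, where it plays no role. The Kato-type absorption in your Step 2 is also not available: for a general trace-free symmetric tensor, $\frac12|D|^2+|\diver B|^2$ can exceed $|\nabla B|^2$. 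It is not needed either. The argument simply discards $|\nabla B|^2\geq0$ and subtracts $\frac{s(1-s)}{4h^2}\Lambda(s)|B|^2$ from the curvature coefficient.

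Second, and more seriously, you do not prove that $q_{n,c,h}>0$. This is part of the statement, and it is what gives $R_*>0$. Your proposed reduction to the sectional curvatures of the reference sphere would not prove it. The reason is that $q$ also contains the defect subtraction $-\frac{s(1-s)}{4h^2}\Lambda(s)$, and the content of Lemma \ref{positive-q} is that this subtraction is dominated.

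For $c=1$ one has $q=nab+(1-s)\bigl(n-\frac{s\Lambda}{4h^2}\bigr)$. The condition $r_0<\pi/2$ gives only $ab>0$. The second factor needs two further facts:
\begin{itemize}
\item $\Lambda=k^2$ for $r\le\pi/2$, which is \eqref{lambda-equals-k}; this rests on $-1<sk'/k<0$, obtained from \eqref{dprime} and Lemma \ref{bounds}(f);
\item $\frac{k^2s}{4h^2}=\frac{d_+^2}{w^2}<n$, which the paper proves by comparing $w$ with $(nm+\sqrt n)^{-1}$.
\end{itemize}

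For $c=-1$, the pole value $q(0)=n(h^2-1)$ shows why $h>1$ is necessary. Sufficiency requires four further facts:
\begin{itemize}
\item $\Lambda=k^2$ for $r\le r_{\HHH}$, via $0<sk'/k<1/n$ (the $\mathcal J$-argument);
\item the bound $nd_-<(nw)^2$ from \eqref{dsmall};
\item positivity of $q_{n,-1,1}$, which is \eqref{hyper-critical};
\item the increment \eqref{hyper-h-growth} in $h$.
\end{itemize}
Your phrase ``the analogous monotonicity comparison'' names none of these. As it stands, the first assertion of the theorem, and with it $R_*>0$, is unproved.
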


The ranges \eqref{gap-range} are exactly the ranges on which $q_{n,c,h}$ is positive everywhere: at the boundary it vanishes at the poles in the hyperbolic product and at the equator in the spherical product (Lemma \ref{positive-q}). The spherical range is also exactly the strict-convexity range of the reference sphere (Remark \ref{sharp-convexity}). For numerical evaluation, the equivalent expression
\[
R_*=\min_{s\in[0,1]}\frac{2q_{n,c,h}(s)}{\sqrt{\mathcal P_{n,c,h}(s)^2+q_{n,c,h}(s)/h^2}+\mathcal P_{n,c,h}(s)}
\]
avoids subtraction of nearly equal numbers. Theorems \ref{main-gap} and \ref{main-sharp} use different lower bounds in \eqref{main-int}; no general ordering of $x_+$ and $R_*$ is asserted. Theorem \ref{main-gap} has elementary closed-form constants and admits the endpoint $\sup_\Sigma|B|=x_+$, while Theorem \ref{main-sharp} uses the geometry of the reference sphere and covers the whole range \eqref{gap-range}.

\subsection*{Comparison with convexity}
Let $X_{\rm conv}(n,c,h)$ denote the pointwise algebraic level below which the pinching forces all principal curvatures to be positive (Proposition \ref{convex-criterion}). Comparing $R_*$ with $X_{\rm conv}$ decides whether Theorem \ref{main-sharp} goes beyond the classification of convex hypersurfaces in \cite[Theorem 17]{LMS}.

\begin{maintheorem}\label{main-compare}
Let $n\geq3$ and let $h$ satisfy \eqref{gap-range}.
\begin{enumerate}
\item[$($i$)$ ] If $c=-1$, then $R_*(n,-1,h)<X_{\rm conv}(n,-1,h)$ for every $h>1$. Therefore every hypersurface satisfying the hypotheses of Theorem \ref{main-sharp} in $\HHH^n\times\RR$ is strictly convex.
\item[$($ii$)$] If $c=1$, then $R_*(n,1,h)>X_{\rm conv}(n,1,h)$ provided either $n\in\{3,4\}$, or $n\geq5$ and $h^2\geq n/(6(n+2))$. In these cases the pinching region of Theorem \ref{main-sharp} contains algebraic data $(B,T)$ for which the operator determined by \eqref{Bdef} has a negative eigenvalue.
\end{enumerate}
\end{maintheorem}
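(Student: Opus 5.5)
We will compare the two thresholds by reducing both to explicit one-variable quantities and comparing them pointwise in $s=|T|^2\in[0,1]$. The first step is to make $X_{\rm conv}$ explicit from Proposition \ref{convex-criterion}. Solving \eqref{Bdef} for $A$ gives
\[
A=h\Id+\frac{1}{2h}\Big(B+k(s)\big(T\otimes T-\tfrac{s}{n}\Id\big)\Big).
\]
The smallest eigenvalue of $A$ at fixed $|B|=x$ and fixed $s$ is minimized by aligning $B$ optimally with the correction term. By the Okumura-type extremal configuration (trace-free, with one eigenvalue prescribed, \cite[Lemma (2.6)]{AdC}), the minimum of the smallest eigenvalue of a trace-free $B$ with $|B|=x$ is $-x\sqrt{(n-1)/n}$. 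It is attained with the negative direction either along $T$ or orthogonal to it, and both placements have to be tested. This yields
\[
X_{\rm conv}=\min_s X(s),
\]
where $X(s)$ is the largest $x$ for which $2h^2+k(s)\,\lambda(s)-x\sqrt{(n-1)/n}>0$. Here $\lambda(s)$ is the relevant eigenvalue of the correction term, equal to $s(n-1)/n$ or $-s/n$.

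Once $X_{\rm conv}$ is in this form, the comparison with $R_*$ reduces to comparing two explicit functions of $s$ built from $k_{n,c,h}$, $q_{n,c,h}$ and $\mathcal P_{n,c,h}$. We will prove the two cases separately.

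\emph{Case (i), $c=-1$.} Here we want $R_*<X_{\rm conv}$. It suffices to exhibit one $s_0$ at which the expression in \eqref{radius} lies below $X_{\rm conv}$. The natural candidate is a pole, $s_0=0$, or the point where $q_{n,-1,h}$ is smallest: by Lemma \ref{positive-q}, $q$ degenerates at the poles as $h\to1$. We will therefore evaluate $q$ and $\mathcal P$ at $s=0$ using the profile of the hyperbolic reference sphere. At $s=0$ the correction in $A$ vanishes, so $X(0)=2h^2\sqrt{n/(n-1)}$. We then check the inequality
\[
2q(0)\big/\big(\sqrt{\mathcal P(0)^2+q(0)/h^2}+\mathcal P(0)\big)<2h^2\sqrt{n/(n-1)},
\]
and verify that it is also below $\min_s X(s)$, using Lemma \ref{model-signs}, which gives the sign of $k$ for $c=-1$. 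The statement that every such hypersurface is strictly convex then follows from Proposition \ref{convex-criterion}.

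\emph{Case (ii), $c=1$.} Here we want $R_*>X_{\rm conv}$. The inequality is reversed, so this time we must bound $R_*$ from below uniformly in $s$. We will take $s_1$ to be the minimizer defining $X_{\rm conv}$. For $c=1$, $k>0$ by Lemma \ref{model-signs}, so the worst placement should be the equator, $s=1$, with the negative eigenvalue orthogonal to $T$. Then $X_{\rm conv}\le X(1)$, an explicit function of $h$ and $k(1)$. We then show that the function $s\mapsto 2q/(\sqrt{\mathcal P^2+q/h^2}+\mathcal P)$ stays above $X(1)$ on all of $[0,1]$. This uses the explicit profile formulas for $q$ and $\mathcal P$ from \eqref{qPdef}, together with monotonicity in $s$, to reduce the claim to an inequality at the endpoints. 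The condition $h^2\ge n/(6(n+2))$, and its automatic validity in the range $h>h_{\SSS}(n)$ for $n=3,4$ (checked via the explicit values of $\int\sin^{n-1}$), should appear exactly as the polynomial inequality that results. The final assertion follows from the construction: take $|B|=x$ with $X_{\rm conv}<x<R_*$ in the extremal configuration. By the definition of $X_{\rm conv}$, the operator $A$ from \eqref{Bdef} then has a negative eigenvalue.

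The main obstacle is the uniform-in-$s$ lower bound for $R_*$ in case (ii). The expression involves the profile-defined $k$, whose values are not elementary, so controlling $q$ and $\mathcal P$ across the whole interval needs the sign and monotonicity information of Lemma \ref{model-signs}, and possibly Lemma \ref{bounds}. If monotonicity fails, we will fall back on a crude bound $q\ge q_{\min}$ and $\mathcal P\le\mathcal P_{\max}$, and expect that to suffice only under the stated restriction on $h$ for $n\ge5$.
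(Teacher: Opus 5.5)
Your outline has the same architecture as the paper. You write $X_{\rm conv}=\frac{2h}{\eta_0}\min_s\min\{a(s),b(s)\}$, test the pole for $c=-1$, compare every $s$ with the equator for $c=1$, and obtain both final assertions from Proposition \ref{convex-criterion}. However, the inequalities that make up the proof are left as ``we will check'', and in (ii) the mechanism you propose does not work as stated.

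In (i), your first check, against $X(0)$, is vacuous, since $\Phi_0(X(0))=-n$ identically. Because $k<0$ and $k'<0$, $\mathfrak m=a$ is minimized at the equator. So $X_{\rm conv}=2h\bar a/\eta_0<X(0)$, and the whole content is to show $\Phi_0(2h\bar a/\eta_0)<0$, where $\Phi_0(u)=n(h^2-1)-p_0u-u^2/(4h^2)$. Write $\bar a=h-\bar\delta$ with $\bar\delta=(n-1)|\bar k|/(2hn)$. Then the $h^2$-terms cancel and one gets $n(|\bar k|/2-1)-n\bar\delta^2/(n-1)$. Its negativity needs the size bound $|\bar k|<\mathcal K_{n,-1}<2$ from Lemma \ref{bounds}(d). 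The sign of $k$, which is what you invoke, only tells you which placement of $B$ is worse.

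The genuine gap is in (ii). Nothing available shows that $s\mapsto R_h(s)$, or $s\mapsto\Phi_s(X_{\rm conv})$, is minimized at an endpoint. The paper records this only as numerical evidence (Remark \ref{window}) and does not need it. Instead, it first uses $\Lambda=k^2$ (valid since $r_0<\pi/2$) to make $q$ explicit. Then, with $ks=2h(a-b)$ and $nh=a+(n-1)b$, it proves two exact identities (Lemma \ref{equator}): $\Phi_1(X_{\rm conv})=\frac{(n-2)^2}{n-1}\bar b(\bar a-\bar b)>0$, and $\Phi_s(X_{\rm conv})-\Phi_1(X_{\rm conv})=n\beta(a-(n-3)\bar b)+(1-s)(n-k^2s/(4h^2))$, with $\beta=b-\bar b\geq0$.

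The coefficient $n-3$ appears because the $s$-dependence of $\mathcal P(s)X_{\rm conv}$ is kept: it adds $+2n\bar b\beta$ to $n(ab-\bar a\bar b)=n\beta(a-(n-1)\bar b)$. This makes $n=3,4$ unconditional ($a>0$, resp.\ $a\geq h\geq\bar b$), with no appeal to the values of $\int_0^{\pi/2}\sin^{n-1}$. For $n\geq5$, the bounds $\beta\leq F_0(1-s)/(2hn)$ (from $0<(ks)'<k<F_0$), $a\geq h\geq\bar b$ and $k<F_0$ give the lower bound $(1-s)\frac{n}{n+2}\big(6-\frac{n}{(n+2)h^2}\big)$. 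This is where $h^2\geq n/(6(n+2))$ comes from.

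Your fallback, $q\geq q_{\min}$ and $\mathcal P\leq\mathcal P_{\max}=\mathcal P(1)$, decouples $q$ from $\mathcal P$ and discards exactly the term $+2n\bar b\beta$. With the same elementary bounds it yields the factor $4-\frac{n}{(n+2)h^2}$ instead. That estimate fails near $h_{\SSS}(4)$: there $h_{\SSS}(4)^2=9/64<1/6$, and the equator slack $\Phi_1(X_{\rm conv})$ tends to $0$ because $\bar b\to0$. For $n\geq5$ it covers a strictly smaller range than claimed.

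So the missing ingredients are the exact equator identity and the difference formula that retains $\mathcal P(s)$.
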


Part ($i$) shows that in $\HHH^n\times\RR$ neither Theorem \ref{main-gap} nor Theorem \ref{main-sharp} gives rigidity beyond \cite[Theorem 17]{LMS}, and the Alexander--Hsiang--Hsiang route of Remark \ref{convexity} is also available there; in the hyperbolic product the contribution of the method is an independent proof through the adapted tensor, with explicit constants. Part ($ii$) is what distinguishes the method from convexity in $\SSS^n\times\RR$. Its proof rests on an exact identity at the equator of the reference sphere (Lemma \ref{equator}). For $n\geq5$ and $h_{\SSS}(n)<h<\sqrt{n/(6(n+2))}$ the inequality $R_*>X_{\rm conv}$ is supported by numerical integration of the profile (Remark \ref{window}) but is not proved here. By Theorem \ref{main-asymp}, the gain $R_*-X_{\rm conv}$ is a window of width $O(1)$ at the scale $2\sqrt{n/(n-1)}\,h^2$.

\subsection*{Asymptotics}
For fixed $n$ and $c$, all thresholds have the same leading scale $\lambda h^2$, $\lambda=2\sqrt{n/(n-1)}$; the next-order terms are as follows.

\begin{maintheorem}\label{main-asymp}
Let $n\geq3$, $c=\pm1$ and $\lambda=2\sqrt{n/(n-1)}$. As $h\to\infty$:
\begin{enumerate}
\item[$($a$)$] $x_{\rm opt}(n,c,h)=\lambda h^2+\mu_c+o(1)$, where
\[
\mu_1=\frac{2(n^2-n-4)}{(n+2)\sqrt{n(n-1)}}>0,\qquad \mu_{-1}=-2\sqrt{\tfrac{n-1}{n}};
\]
\item[$($b$)$] with $\rho_1=\dfrac{2(n-4)(n-1)}{(n+2)\sqrt{n(n-1)}}$,
\begin{align*}
R_*(n,1,h)&=\lambda h^2+\rho_1+O(h^{-2}),\\
X_{\rm conv}(n,1,h)&=\lambda h^2-\frac{2\sqrt n}{(n+2)\sqrt{n-1}}+O(h^{-2}),\\
R_*(n,-1,h)&=\lambda h^2+\mu_{-1}+O(h^{-2}),\\
X_{\rm conv}(n,-1,h)&=\lambda h^2-\frac{2\sqrt{n(n-1)}}{n+2}+O(h^{-2});
\end{align*}
\item[$($c$)$] consequently, for $c=1$,
\[
R_*-X_{\rm conv}\to\frac{2(n-2)^2}{(n+2)\sqrt{n(n-1)}},\qquad x_{\rm opt}-R_*\to\frac{8(n-2)}{(n+2)\sqrt{n(n-1)}},
\]
while for $c=-1$,
\[
X_{\rm conv}-R_*\to\frac{4\sqrt{n-1}}{(n+2)\sqrt n},\qquad x_{\rm opt}-R_*\to0 .
\]
\end{enumerate}
In particular, for $c=1$ and all sufficiently large $h$, the pinching region of Theorem \ref{opt-gap} also contains algebraic data $(B,T)$ for which the operator determined by \eqref{Bdef} has a negative eigenvalue (Proposition \ref{nonconvex}).
\end{maintheorem}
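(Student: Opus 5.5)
The plan is to carry out a second-order expansion of each threshold in powers of $h^{-2}$. The input is the large-$h$ behaviour of the profile data $k_{n,c,h}(s)$, $sk'_{n,c,h}(s)$, $q_{n,c,h}(s)$ and $\mathcal P_{n,c,h}(s)$.

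\textbf{Step 1: profile of the reference sphere.} As $h\to\infty$ the reference sphere shrinks to radius of order $1/h$. After rescaling by $h$, its profile converges to the round sphere in Euclidean $\RR^{n+1}$. I will write the profile ODE in the variable $s=|T|^2$ and expand $k_{n,c,h}(s)=k_0(s)+h^{-2}k_1(s)+O(h^{-4})$ uniformly on $[0,1]$, with the same expansion for $sk'$. In the leading term, umbilicity of the Euclidean model forces $A-h\Id$ to be $O(h^{-1})$ with respect to the $T$-direction. The next order carries the ambient curvature $c$ linearly.

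I will then substitute these expansions into $q$ and $\mathcal P$ from \eqref{qPdef}, and into the constants entering $x_{\rm opt}$ and $X_{\rm conv}$. I expect $q/h^2\to$ a constant of order $h^2$ and $\mathcal P\to$ a constant, so that every threshold has the form $2h^2(\sqrt{\mathcal P^2+q/h^2}-\mathcal P)$. After writing $q=h^4q_0+h^2q_1(s)+O(1)$, such a threshold expands as
\[
\lambda h^2+\frac{h^2 q_1/h^2}{\sqrt{q_0}}\ \text{-type correction}-2\mathcal P_0(s)+O(h^{-2}).
\]
Here the leading coefficient is $\lambda=2\sqrt{q_0}$, and it comes out $s$-independent and equal to $2\sqrt{n/(n-1)}$. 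This is consistent with Theorem \ref{main-gap}.

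\textbf{Step 2: minimization and the individual thresholds.} For $R_*$, the constant term is $\min_s\bigl(q_1(s)/\sqrt{q_0}-2\mathcal P_0(s)\bigr)$. Since these coefficients are explicit polynomials in $s$ (with $c$ appearing linearly), I will locate the minimizer, which I expect to be an endpoint: the equator $s=1$ or the pole $s=0$. For $c=1$ the equator identity, Lemma \ref{equator}, should give $\rho_1$ directly. For $X_{\rm conv}$ I will use Proposition \ref{convex-criterion}: the smallest principal curvature of $A=h\Id+(B+k(T\otimes T-\tfrac{s}{n}\Id))/(2h)$ vanishes at a level given by an Okumura-type extremal configuration. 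I will expand that level the same way and minimize over $s$. For $x_{\rm opt}$, Theorem \ref{opt-gap} replaces $k(s)$ by an $s$-independent interval. Its endpoints are $\sup k$ and $\inf k$, which expand as $c$ plus $O(h^{-2})$, so $\mu_c$ comes out of the same square-root expansion.

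\textbf{Step 3: consequences.} Part (c) is subtraction of the constants from (a) and (b). The final assertion then follows as for Theorem \ref{main-compare}(ii): since $x_{\rm opt}-X_{\rm conv}\to\mu_1-(X\text{-constant})>0$, Proposition \ref{nonconvex} applies for large $h$.

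\textbf{Main obstacle.} The hardest part is the uniform-in-$s$ second-order expansion of $k$ and $sk'$ near the poles $s\to0$. There the profile parametrization by $|T|^2$ degenerates, so I would first check that the $h^{-2}$ remainders are uniform there. I would do this by expanding in arclength of the generating curve rather than directly in $s$.
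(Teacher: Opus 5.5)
Your plan for (b) is the right kind of argument: expand $q_{n,c,h}$ and $\mathcal P_{n,c,h}$ uniformly in $s$, Taylor-expand the square root, then minimize over $s$. But two of its inputs are wrong, and with them every constant comes out wrong.

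First, $k_{n,c,h}$ does not tend to $c$ when $n\geq3$. Lemma \ref{regularity}, Lemma \ref{bounds}(d) and $w\leq1/(nh)$ on $(0,r_0]$ give $0\leq F_0-k(s)\leq Q/(n^2h^2)$ uniformly on $[0,1]$ (this is \eqref{k-uniform}), with $F_0=2nc/(n+2)$. So $I_{n,c}(h)$ shrinks to $\{F_0\}$, not to $\{c\}$. With $c$ in place of $F_0$ you would get, for example, $X_{\rm conv}(n,1,h)=\lambda h^2-1/\sqrt{n(n-1)}+O(h^{-2})$ instead of $\lambda h^2-2\sqrt n/((n+2)\sqrt{n-1})+O(h^{-2})$. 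The same bound, together with $|sk'|\leq\sigma_{n,c}(h)=O(h^{-2})$, also removes your \emph{main obstacle}. No second-order expansion of $k$ is needed, because $k$ enters $q$ and $\mathcal P$ only through $ks$ and through terms already carrying $h^{-2}$; in any case $k$ is analytic through $s=0$.

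Second, your scaling is off. In fact $q_{n,c,h}(s)=nh^2+\mathfrak q_c(s)+O(h^{-2})$ with $\mathfrak q_c$ affine in $s$, and $\mathcal P_{n,c,h}(s)=p_0+\gamma(s)h^{-2}+O(h^{-4})$ with $p_0=\frac{n(n-2)}{2\sqrt{n(n-1)}}$ of order one. So $\mathcal P^2$ and $q/h^2$ are comparable. The leading coefficient is $2(\sqrt{p_0^2+n}-p_0)=\lambda$, not $2\sqrt{q_0}$; with your $q\sim h^4q_0$ the threshold would even be of order $h^3$. The constant term is $(\mathfrak q_c(s)-\lambda\gamma(s))/\varpi$ with $\varpi=\sqrt{p_0^2+n}$, so it is the $h^{-2}$-coefficient $\gamma$ of $\mathcal P$ that enters, not $\mathcal P_0$. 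With the correct data this equals $\psi_c(s)/\varpi$, where $\psi_c(s)=\mathfrak q_c(s)-|F_0|s$ is affine and is minimized at $s=1$ for $c=1$ and at $s=0$ for $c=-1$.

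For (a) the proposal has no working mechanism, because $x_{\rm opt}$ is not a square-root expression. It is the first positive zero of $\Pi$. The part $\Theta$ of $\Pi$ is a joint minimum over $s$, over $k\in I_{n,c}(h)$, and over the eigenvalue $y=x\eta$ of $B$ in the direction of $T$, entering through the prescribed-eigenvalue Okumura function $C_n(x,y)$ of Lemma \ref{okumura-y}.

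The missing idea is concentration. After the rescaling $x=\mathfrak xh^2$, the term $\frac n2\mathfrak xh^2(\hat C(\eta)+o_n)\geq0$ carries a large factor and vanishes exactly at $\eta\in\{-\eta_0,\eta_1\}$ (Lemma \ref{Chat}). In the limit only these two values survive, and $\mu_c=M(\lambda)/\varpi$ with $M(\lambda)=cn+\min\{0,\kappa(-\eta_0),\kappa(\eta_1)\}$, where $\kappa(\eta)=-c-\frac{F_0}2+n\big(\frac{F_0}2-c\big)\eta^2-\frac{F_0\lambda\eta}2$. The $\eta$-dependent terms come from $|BT|^2$ and $\ip{BT}{T}$ evaluated at the Okumura configuration, rather than bounded separately. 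They are exactly what makes $\mu_1-\rho_1=\frac{8(n-2)}{(n+2)\sqrt{n(n-1)}}\neq0$; an expansion of the type you describe reproduces $R_*$, not $x_{\rm opt}$.

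Because $\Pi$ is not quadratic, you also need two further steps: uniform convergence of the rescaled correction on compact sets, and a first-zero argument (positivity on $(0,\lambda-\delta]$ and a sign change in the window $\mathfrak x=\lambda+\tau/h^2$). Your derivation of the final assertion from $x_{\rm opt}-X_{\rm conv}\to\mu_1+\frac{2\sqrt n}{(n+2)\sqrt{n-1}}>0$ is fine once (a) and (b) are correct.
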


\begin{remark}\label{algebraic-remark}
Theorem \ref{main-compare}($ii$), Theorem \ref{main-asymp} and Propositions \ref{convex-criterion}($ii$) and \ref{nonconvex} concern pointwise algebraic data $(B,T)$. They show that the pinching conditions are not contained in the region where a pointwise estimate based only on $B$ and $T$ forces convexity. These data are not asserted to satisfy the compatibility equations of an immersion, and no nonconvex CMC hypersurface satisfying the pinching is claimed to exist: every closed immersion satisfying the hypotheses of Theorems \ref{main-gap} or \ref{main-sharp} is identified with the strictly convex reference sphere. Likewise, the positivity range of $q_{n,c,h}$ is exact only for the estimate of Section \ref{sharp-section}, and $x_{\rm opt}$ optimizes a relaxed algebraic estimate; neither is claimed to be an optimal geometric rigidity threshold.
\end{remark}

\subsection*{Scope and comparison of the results}
Table \ref{threshold-summary} records the hypotheses and endpoint conventions of the three gap criteria. In every row, $n\geq3$, $h$ lies in \eqref{range}, and the immersion is connected, closed and two-sided. The complete versions established here are Corollaries \ref{complete-sphere} and \ref{complete}.

\begin{table}[htbp]
\caption{Gap criteria for the adapted tensor.}\label{threshold-summary}
\centering
\small
\renewcommand{\arraystretch}{1.25}
\begin{tabular}{@{}p{0.16\textwidth}p{0.26\textwidth}p{0.21\textwidth}p{0.27\textwidth}@{}}
\hline
Result & Additional condition & Pinching & Comparison established \\
\hline
Theorem \ref{main-gap} & $a_0>0$ & $\sup_\Sigma|B|\leq x_+$ & Forces strict convexity \\
Theorem \ref{main-sharp} & $h>h_{\SSS}(n)$ if $c=1$; $h>1$ if $c=-1$ & $\sup_\Sigma|B|<R_*$ & $R_*<X_{\rm conv}$ if $c=-1$; $R_*>X_{\rm conv}$ if $c=1$ (Theorem \ref{main-compare}) \\
Theorem \ref{opt-gap} & $x_{\rm opt}>0$ & $\sup_\Sigma|B|<x_{\rm opt}$ & $x_{\rm opt}\geq x_+$ if $a_0>0$ \\
\hline
\end{tabular}
\end{table}

Theorem \ref{main-compare} compares $R_*$ with $X_{\rm conv}$ for all $h>1$ in the hyperbolic product and, in the spherical product, on the range stated there. Theorem \ref{main-asymp} gives, in addition, $x_{\rm opt}>R_*>X_{\rm conv}$ in the spherical product for all sufficiently large $h$; in the hyperbolic product it gives $x_{\rm opt}-R_*\to0$, without asserting an eventual ordering of $x_{\rm opt}$ and $R_*$. The non-strict endpoint in Theorem \ref{main-gap} follows from a positive remainder in its estimate; no such endpoint conclusion is claimed for the other two criteria.

These comparisons separate the contribution of the tensor method from the convexity classification in \cite{LMS}. Once strict convexity is known, that classification supplies rotational rigidity. The present construction instead characterizes a tensor, classifies its complete zero set, and obtains sufficient pinching conditions from its differential identity. In the spherical product, the improved conditions are not contained in the pointwise algebraic region forcing convexity (Remark \ref{algebraic-remark}). The practical gain is that Theorem \ref{main-sharp} applies without knowing in advance that $\Sigma$ is convex. We do not claim a closed CMC hypersurface covered by Theorem \ref{main-sharp} and not by \cite{LMS}: every hypersurface satisfying its hypotheses is the strictly convex reference sphere.

The curvature-commutation argument in \eqref{main-int} is related to Cheng and Yau's formula \cite{CY}, with all terms arising from the Codazzi defect retained. The algebraic estimate for Theorem \ref{main-gap} follows the method of \cite{AdC}, including Okumura's inequality \cite{Okumura}. Sections \ref{sharp-section} and \ref{opt-section} refine this estimate by using the model geometry and joint minimization, respectively.

\textbf{The paper is organized as follows}. Section \ref{geometry} fixes conventions, and Section \ref{profile} constructs the model coefficient. Section \ref{equal-section} classifies the complete zero set of $B$. Section \ref{simons-section} computes its Codazzi defect and proves the integrated Simons identity. Section \ref{gap-section} establishes the explicit gap theorem and its completeness consequences. Section \ref{sharp-section} proves the gap theorem throughout the positivity range and compares its threshold with convexity, proving Theorem \ref{main-compare}. Section \ref{opt-section} develops the joint algebraic estimate and the asymptotic comparisons.

\section{Geometry and sign conventions}\label{geometry}
Our curvature convention, for orthonormal $X,Y$, is
\begin{gather*}
R(X,Y)Z=\nabla_X\nabla_YZ-\nabla_Y\nabla_XZ-\nabla_{[X,Y]}Z,\\ K(X,Y)=\ip{R(X,Y)Y}{X}.
\end{gather*}
Throughout, \emph{closed} means compact without boundary. Two-sidedness is a genuine assumption for immersions: a closed immersed hypersurface of a simply connected manifold need not be two-sided, as immersions of the projective plane in $\RR^3$ show. Let $\xi=\partial_t$ be the parallel vertical unit field. For a chosen unit normal $N$, set
\[
\xi=T+\nu N,\quad |T|^2+\nu^2=1,\quad A X=-\bar\nabla_XN,\quad h=\frac1n\tr A.
\]
For a CMC immersion in $\QQ_c^n\times\RR$, the compatibility identities are
\begin{align}
\nabla_XT&=\nu AX,& X(\nu)&=-\ip{AX}{T},\label{compat}\\
(\nabla_XA)Y-(\nabla_YA)X
&=-c\nu\big(\ip XT Y-\ip YT X\big).\label{codA}
\end{align}
In particular, with $s=|T|^2$,
\begin{equation}\label{grads}
Xs=2\nu\ip{AX}{T},\qquad \nabla s=2\nu AT.
\end{equation}
The Gauss equation for an orthonormal basis $X,Y$ of a tangent plane is
\begin{equation}\label{gauss}
K_\Sigma(X,Y)=c(1-\ip XT^2-\ip YT^2)
+\ip{AX}{X}\ip{AY}{Y}-\ip{AX}{Y}^2.
\end{equation}
The first term is at least $-1$ in either product.

For completeness, here are the derivations. Write $V^*=V-\ip V\xi\xi$ for the horizontal part. The curvature tensor of $\QQ_c^n\times\RR$ is $\bar R(X,Y)Z=c\big(\ip{Y^*}{Z^*}X^*-\ip{X^*}{Z^*}Y^*\big)$.
\begin{enumerate}[label=(\alph*)]
\item Since $\xi$ is parallel, $0=\bar\nabla_X(T+\nu N)=\nabla_XT+\ip{AX}{T}N+X(\nu)N-\nu AX$. The tangent and normal parts give \eqref{compat}.
\item Differentiating $AY=-\bar\nabla_YN$ gives $(\nabla_XA)Y-(\nabla_YA)X=-(\bar R(X,Y)N)^\top$. Since $\ip{Y^*}{N^*}=-\nu\ip YT$ and $(X^*)^\top=X-\ip XT T$, this gives \eqref{codA}.
\item For orthonormal $X,Y$ we have $|X^*|^2=1-\ip XT^2$ and $\ip{X^*}{Y^*}=-\ip XT\ip YT$. Hence $\ip{\bar R(X,Y)Y}{X}=c(1-\ip XT^2-\ip YT^2)$, and the Gauss equation gives \eqref{gauss}.
\end{enumerate}

\section{The model coefficient}\label{profile}
Set
\[
f_c(r)=\begin{cases}\sin r&c=1,\\ \sinh r&c=-1,\end{cases}\quad
m(r)=\frac{f_c'(r)}{f_c(r)},\quad
w(r)=\frac{\int_0^r f_c(u)^{n-1}\,du}{f_c(r)^{n-1}}.
\]
Here $r\in(0,\pi)$ for $c=1$ and $r\in(0,\infty)$ for $c=-1$. Differentiation gives
\begin{equation}\label{odew}
w'=1-(n-1)mw,\qquad m'=-c-m^2.
\end{equation}
The function $w$ increases strictly from $0$; it tends to infinity at $\pi$ for $c=1$ and to $1/(n-1)$ at infinity for $c=-1$ \cite[Lemma 1]{LEN}. For completeness we note the elementary proofs. For $c=1$ the numerator of $w$ tends to $\int_0^\pi\sin^{n-1}>0$ and the denominator to $0$. For $c=-1$, l'H\^opital's rule gives the limit $1/(n-1)$. Moreover $w<1/(n-1)$, because $\frac{d}{dr}\frac{\sinh^{n-1}r}{n-1}=\sinh^{n-2}r\cosh r>\sinh^{n-1}r$ and both sides vanish at $0$. Strict monotonicity, $w'>0$, is proved in Lemma \ref{bounds}(c) for $n\geq3$; for $n=2$ one has $w=\tan(r/2)$ or $w=\tanh(r/2)$. Hence, in the range \eqref{range}, a unique $r_0$ solves $nhw(r_0)=1$. The lower hemisphere of the rotational sphere is a radial graph whose slope variable and principal curvatures are \cite[\S4]{LEN}
\begin{equation}\label{model}
\begin{gathered}
\rho(r)=nhw(r),\qquad z'(r)=\frac{\rho(r)}{\sqrt{1-\rho(r)^2}},\\
a=\rho'=nhw',\qquad b=\rho m=nhwm,
\end{gathered}
\end{equation}
where $a$ has meridian multiplicity one, $b$ orbit multiplicity $n-1$, $nh=a+(n-1)b$, and $|T|=\rho$. Reflection in the upper horizontal level completes the sphere. We use the orientation for which $h>0$.

These formulas follow directly. Write the lower hemisphere as $\{(x_0,z):\ d(x_0,p)=r,\ z=Z(r)\}$ and let $\rho=\sin\vartheta$, where $\vartheta\in[0,\pi/2)$ is the angle between the meridian and the horizontal, so that $Z'=\rho/\sqrt{1-\rho^2}$. Take the upward normal $N=-\sin\vartheta\,\partial_r+\cos\vartheta\,\partial_z$.
\begin{enumerate}[label=(\alph*)]
\item Along the meridian, parametrized by arclength $\varsigma$ in the flat $(r,z)$-plane, $dN/d\varsigma=-(d\vartheta/d\varsigma)e_1$. So $a=d\vartheta/d\varsigma=\cos\vartheta\,d\vartheta/dr=\rho'$.
\item For $X$ tangent to the geodesic sphere, $\bar\nabla_XN=-\sin\vartheta\,\nabla^{\QQ_c}_X\partial_r=-\rho mX$. So $b=\rho m$.
\end{enumerate}
Then $nh=\rho'+(n-1)m\rho=(f_c^{n-1}\rho)'/f_c^{n-1}$ with $\rho(0)=0$, which integrates to $\rho=nhw$. Finally $\nu=\ip N\xi=\cos\vartheta$, so $|T|=\sin\vartheta=\rho$.

For $s\in(0,1]$, let $r(s)\in(0,r_0]$ be uniquely determined by $s=(nhw(r))^2$. Define
\begin{equation}\label{defk}
\begin{aligned}
k_{n,c,h}(s)=\frac{2h(a-b)}s
&=\frac{2}{nw(r(s))^2}-\frac{2m(r(s))}{w(r(s))}\\
&=\frac{2[1-nm(r(s))w(r(s))]}{nw(r(s))^2}.
\end{aligned}
\end{equation}
Thus $k_{n,c,h}=F_c\circ r$, where
\begin{equation}\label{Fdef}
F_c(r)=\frac{2[1-nm(r)w(r)]}{nw(r)^2}
\end{equation}
does not depend on $h$; the mean curvature enters only through $r(s)$.

\begin{lemma}\label{regularity}
The function $k_{n,c,h}$ extends real analytically through $s=0$ and smoothly to $s=1$, and
\begin{equation}\label{kzero}
k_{n,c,h}(0)=F_0:=\frac{2nc}{n+2}.
\end{equation}
For $n=2$ it is identically $c$ on $[0,1]$.
\end{lemma}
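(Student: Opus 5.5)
The plan is to work with $F_c$ in the variable $r$ and transfer the regularity to $s$ through the change of variables $s=(nhw(r))^2$. The key structural fact is that everything is even in $r$ near $r=0$. First, $f_c^{n-1}$ is an odd or even analytic function with leading term $r^{n-1}$, so $\int_0^r f_c^{n-1}=r^n\cdot(\text{even analytic})$. Hence $w=r\cdot\phi(r^2)$ with $\phi$ analytic and $\phi(0)=1/n$. Similarly, $m=r^{-1}\psi(r^2)$ with $\psi$ analytic and $\psi(0)=1$. Consequently $mw$ and $w^2$ are even analytic functions of $r$.

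Next I will compute the expansions to second order. From $f_c^{n-1}=r^{n-1}\bigl(1-c(n-1)r^2/6+O(r^4)\bigr)$ one gets
\[
w=\frac rn\Bigl(1+\frac{c(n-1)}{3(n+2)}r^2+O(r^4)\Bigr),\qquad m=\frac1r-\frac{c\,r}{3}+O(r^3).
\]
It follows that
\[
1-nmw=\frac{c\,r^2}{n+2}+O(r^4),\qquad nw^2=\frac{r^2}{n}\bigl(1+O(r^2)\bigr).
\]
The factor $r^2$ cancels in \eqref{Fdef}, so $F_c$ extends to an even real-analytic function $G(r^2)$ near $r=0$, with $F_c(0)=2nc/(n+2)=F_0$. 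This gives \eqref{kzero}.

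For the change of variables, note that $s=n^2h^2w^2=n^2h^2r^2\phi(r^2)^2=:\Phi(r^2)$. Here $\Phi$ is analytic with $\Phi(0)=0$ and $\Phi'(0)=h^2\neq0$. By the analytic inverse function theorem, $r^2$ is an analytic function of $s$ near $s=0$. Therefore $k_{n,c,h}(s)=G(r(s)^2)$ is analytic through $s=0$; in fact it extends to a neighbourhood of $0$ in $\RR$.

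On $(0,1]$ we have $ds/dr=2n^2h^2ww'>0$, because $w>0$ and $w'>0$ (the strict monotonicity from Lemma \ref{bounds}(c) for $n\geq3$, and explicit for $n=2$). Since $F_c$ is analytic on $(0,\pi)$ or $(0,\infty)$, which contains $r_0$ in the interior, the inverse function theorem makes $r(s)$, and hence $k$, smooth (indeed analytic) up to and slightly beyond $s=1$. This is where the range \eqref{range} enters: it guarantees that $r_0$ is an interior point at which $w'(r_0)>0$.

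For $n=2$ I will compute directly. For $c=1$ we have $w=\tan(r/2)=(1-\cos r)/\sin r$ and $m=\cot r$. Then
\[
1-2mw=\frac{1-\cos r}{1+\cos r}=w^2,
\]
so $F_1=(1-2mw)/w^2\equiv1$. For $c=-1$ the same computation with $\tanh(r/2)$ and $\coth r$ gives $1-2mw=-w^2$, hence $F_{-1}\equiv-1$. Thus $k_{2,c,h}\equiv c$.

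The main obstacle is the second-order expansion of $w$ and $m$ and the resulting cancellation in $1-nmw$. This expansion is the only genuine computation. The analyticity in $s$ (as opposed to $\sqrt s$) rests on recognizing that $F_c$ and $s$ are both functions of $r^2$, so this parity must be checked carefully.
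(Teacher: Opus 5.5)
Your proposal is correct and follows essentially the same route as the paper: the expansions of $w$ and $m$ near $r=0$, analyticity of $F_c$ in $r^2$ after the $r^2$ cancellation in $1-nmw$, a local analytic inverse of $s=\Phi(r^2)$ using $\Phi'(0)=h^2$, and $ds/dr=2n^2h^2ww'>0$ at $r_0$. The only difference is cosmetic: for $n=2$ you check $1-2mw=c\,w^2$ separately for each sign via half-angle formulas, whereas the paper treats both signs at once using $\int_0^r f_c=(1-f_c')/c$ and $(f_c')^2+cf_c^2=1$.
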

\begin{proof}
Near zero,
\begin{align*}
f_c(r)&=r-\frac c6r^3+O(r^5),\\
w(r)&=\frac rn\left(1+\frac{c(n-1)}{3(n+2)}r^2+O(r^4)\right),\\
m(r)&=\frac1r-\frac c3r+O(r^3).
\end{align*}
Consequently $1-nmw=cr^2/(n+2)+O(r^4)$, yielding \eqref{kzero}. In fact $w(r)/r$ and $rm(r)$ are analytic in $r^2$. Cancellation in the final expression of \eqref{defk} makes $k$ analytic in $r^2$, while $s=h^2r^2+O(r^4)$ has a local analytic inverse in $r^2$. At $s=1$, $r_0>0$ and $ds/dr=2(nh)^2ww'>0$.

For $n=2$, put $I(r)=\int_0^r f_c(u)\,du=(1-f_c'(r))/c$. Since $w=I/f_c$ and $(f_c')^2+cf_c^2=1$,
\[
k_{2,c,h}=\frac{f_c^2-2f_c'I}{I^2}
=\frac{(1-f_c')^2/c}{I^2}=c.
\]
\end{proof}

The coefficient satisfies a Riccati equation, which gives explicit bounds. Put
\begin{equation}\label{Kn}
\begin{gathered}
\mathcal K_{n,1}=\frac{2n}{n+2},\qquad \mathcal K_{n,-1}=\frac{2(n-1)}{n},\\
\theta_{n,c}(h)=\begin{cases}1,&c=1,\\[2pt] 1-\left(\dfrac{n-1}{nh}\right)^2,&c=-1,\end{cases}
\end{gathered}
\end{equation}
and
\begin{equation}\label{sigma}
\sigma_{n,c}(h)=\frac{(n-2)\mathcal K_{n,c}^2}{4n\,h^2\,\theta_{n,c}(h)}.
\end{equation}
Note that $\theta_{n,-1}(h)>0$ exactly in the range \eqref{range}, and $\mathcal K_{n,c}<2$.

\begin{lemma}\label{bounds}
Let $n\geq3$. Then:
\begin{enumerate}
\item[(a)] $F_c$ satisfies
\begin{equation}\label{riccati}
F_c'=\frac2w\left[c-\frac{n+2}{2n}F_c-\frac{n-2}{4}w^2F_c^2\right].
\end{equation}
\item[(b)] For $c=1$, $F_1>0$ on $(0,\pi)$. For $c=-1$, $F_{-1}>-2(n-1)/n$ on $(0,\infty)$.
\item[(c)] For $c=1$, $w'>1/n$ on $(0,\pi)$. For $c=-1$, $w'>(1-(n-1)^2w^2)/n>0$ on $(0,\infty)$. Hence $w'\geq\theta_{n,c}(h)/n$ on $(0,r_0]$.
\item[(d)] $F_c<F_0$ for $r>0$. Consequently $|k_{n,c,h}|\leq \mathcal K_{n,c}$ on $[0,1]$; more precisely $0<k<\mathcal K$ for $c=1$ and $s>0$, and $|k|<\mathcal K$ for $c=-1$. Moreover, with $Q=n(n-2)\mathcal K_{n,c}^2/(2(n+2))$, one has $-Qw^2\leq F_c-F_0$, and the bracket in \eqref{riccati} has absolute value at most $(n-2)\mathcal K_{n,c}^2w^2/4$.
\item[(e)] For every $h$ in \eqref{range} and $s\in(0,1]$,
\begin{equation}\label{skprime}
|s\,k_{n,c,h}'(s)|\leq\sigma_{n,c}(h).
\end{equation}
\item[(f)] $F_c'<0$ on its whole domain, $(0,\pi)$ for $c=1$ and $(0,\infty)$ for $c=-1$; this does not involve $h$. Thus, $k_{n,c,h}'<0$ on $(0,1]$ for every admissible $h$.
\end{enumerate}
\end{lemma}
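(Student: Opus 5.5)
The plan is to prove the six items in the order (a), (b), (c), (d), (e), (f). Everything runs through one substitution that eliminates $m$. From \eqref{Fdef}, $nmw=1-\tfrac n2 w^2F_c$, so $m=(1-\tfrac n2w^2F_c)/(nw)$. Inserting this into \eqref{odew} gives the identity
\[
w'=\frac1n+\frac{n-1}{2}w^2F_c,
\]
which I will use repeatedly.

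\emph{Steps (a)--(c).} For (a), I differentiate $F_c=2(1-nmw)/(nw^2)$ using $w'=1-(n-1)mw$ and $m'=-c-m^2$. I then eliminate $m$ by the substitution above and collect terms, which should give \eqref{riccati}; this is routine algebra.

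For (b) and (c), set $F_0=2nc/(n+2)$ and write \eqref{riccati} as $F_c'=\frac2w\,\Phi(w,F_c)$. Note that $\Phi(0,F_0)=0$, and the expansion in Lemma \ref{regularity} gives $F_c-F_0\sim -\mathrm{const}\cdot r^2$ near $0$.
\begin{itemize}
\item[$\cdot$] For $c=1$: at a first zero of $F_1$ we would have $\Phi=1>0$, so $F_1$ cannot cross $0$ downward, and $F_1>0$.
\item[$\cdot$] For $c=-1$: put $\mathcal K=\mathcal K_{n,-1}=2(n-1)/n$. Since $F_0>-\mathcal K$ (equivalent to $n^2<(n-1)(n+2)$), we start above the barrier. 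At $F=-\mathcal K$, the bound $w<1/(n-1)$ gives $w^2\mathcal K^2<4/n^2$, and hence
\[
\Phi>-1+\frac{(n+2)(n-1)}{n^2}-\frac{n-2}{n^2}=0,
\]
so the barrier $-\mathcal K$ is never reached.
\end{itemize}
Part (c) then follows from the identity for $w'$. For $c=1$, $F_1>0$ gives $w'>1/n$. For $c=-1$, $F_{-1}>-\mathcal K$ gives $w'>(1-(n-1)^2w^2)/n>0$. On $(0,r_0]$ we have $w\le 1/(nh)$, which yields $w'\ge\theta_{n,c}(h)/n$.

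\emph{Step (d).} Write $G=F_c-F_0$, so that
\[
\Phi=-\tfrac{n+2}{2n}G-\tfrac{n-2}{4}w^2F_c^2 .
\]
\begin{itemize}
\item[$\cdot$] Upper bound: at $G=0$ with $r>0$, $\Phi<0$. Together with $G<0$ near $0$, a first-contact argument gives $G<0$.
\item[$\cdot$] The bound $|F_c|\le\mathcal K$ then follows from $F_0=\mathcal K_{n,1}$ when $c=1$, and from $|F_0|<\mathcal K$ together with (b) when $c=-1$.
\item[$\cdot$] Lower barrier: using $F_c^2\le\mathcal K^2$, at $G=-Qw^2$ one gets $G'\ge w[(n+2)Q/n-(n-2)\mathcal K^2/2]=0$. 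The barrier itself has derivative $-2Qww'<0$ by (c), so $G$ stays above $-Qw^2$.
\end{itemize}
For the bound on the bracket, $\Phi$ is the difference of two nonnegative terms, each at most $(n-2)\mathcal K^2w^2/4$. Hence $|\Phi|\le(n-2)\mathcal K^2w^2/4$.

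\emph{Steps (e) and (f).} For (e), use $s=n^2h^2w^2$ and $ds/dr=2n^2h^2ww'$ to get
\[
sk'=\frac{wF_c'}{2w'}=\frac{\Phi}{w'} .
\]
Combining $|\Phi|\le(n-2)\mathcal K^2w^2/4$, $w^2\le 1/(n^2h^2)$ and $w'\ge\theta/n$ gives exactly $\sigma_{n,c}(h)$.

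I expect (f) to be the main obstacle, since the two terms of $\Phi$ have opposite signs. Near $0$ we have $\Phi<0$. Suppose $r_1$ is a first zero of $\Phi$, so that $F_c'(r_1)=0$. Differentiating $\Phi$ gives
\[
\Phi'(r_1)=-\tfrac{n-2}{2}ww'F_c^2 ,
\]
which is strictly negative unless $F_c(r_1)=0$. Strict negativity is impossible at a first zero approached from below. The case $F_c(r_1)=0$ is also excluded, because it would give $\Phi=c\neq0$. Hence $F_c'<0$ on the whole domain. Finally, $r(s)$ is increasing, so $k'_{n,c,h}<0$ on $(0,1]$.
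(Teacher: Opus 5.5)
Your route is the paper's. You use the Riccati equation \eqref{riccati}, constant barriers in (b), and the identity $w'=\frac1n+\frac{n-1}{2}w^2F_c$ for (c); this is the paper's $mw<1/n$, resp.\ $nmw<1+(n-1)w^2$. Then come first-contact arguments in (d), $sk'=\Phi/w'$ in (e), and the sign of the second derivative at a first critical point in (f). These computations are correct.

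The gap is at $r=0$, where every comparison in (d), and the start of (f), must be initialized. You attribute $F_c-F_0\sim-\mathrm{const}\cdot r^2$ to Lemma \ref{regularity}. Its expansions only determine $F_c(0)=F_0$; the $r^2$-coefficient of $F_c$ would need $1-nmw$ to order $r^4$. For the upper bound $G<0$, and for ``near $0$ we have $\Phi<0$'' in (f), you need the sign of that coefficient.

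The lower barrier needs more. Both $G$ and $-Qw^2$ vanish to second order at $r=0$. Your touching computation only shows that $G+Qw^2$ has positive derivative at its zeros, so it can only cross zero upward. That does not exclude $G<-Qw^2$ on an initial interval $(0,\epsilon)$. You must know that $\lim_{r\to0}(F_c-F_0)/w^2>-Q$, and nothing in your proposal gives the value of this limit.

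The missing input is exactly what the paper computes. By Lemma \ref{regularity}, $\mathfrak q=(F_c-F_0)/w^2$ is analytic in $r^2$, and \eqref{riccati} is equivalent to
\[
w\,\mathfrak q'=-\Big(2w'+\frac{n+2}{n}\Big)\mathfrak q-\frac{n-2}{2}F_c^2 .
\]
As $r\to0$ we have $w\mathfrak q'\to0$ and $w'\to1/n$, which gives $\mathfrak q(0)=-n(n-2)F_0^2/(2(n+4))$. This is negative. It is also strictly larger than $-Q=-n(n-2)\mathcal K_{n,c}^2/(2(n+2))$, because $F_0^2\le\mathcal K_{n,c}^2$ and $n+4>n+2$.

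With $\mathfrak q(0)\in(-Q,0)$, your first-contact arguments for $G<0$ and for $G\geq-Qw^2$ start correctly. Also $F_c'=\mathfrak q'w^2+2\mathfrak q\,ww'=2\mathfrak q(0)ww'+O(r^3)<0$, which gives $\Phi<0$ near $0$ for (f). The rest of your proof then goes through as written.
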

\begin{proof}
The items are proved in the order (a), (b), (c), (d); inside (d) we first prove $F<F_0$, then deduce $|F|\leq \mathcal K$, and only then prove $F-F_0\geq-Qw^2$. No item uses a later one.

(a) Put $u=1-nmw$, so $F=2u/(nw^2)$. By \eqref{odew},
$u'=-n(m'w+mw')=n(cw-mu)$. Hence
\[
F'=\frac{2}{nw^3}\big(w\,u'-2uw'\big)
=\frac{2}{nw^3}\big[ncw^2-2u+(n-2)muw\big].
\]
Now substitute $mw=(1-u)/n$ and $u=nw^2F/2$; this gives \eqref{riccati}. (For $n=2$ it reduces to $F'=2(c-F)/w$, which is consistent with $F\equiv c$.)

(b), $c=1$. $F_1(0)=F_0>0$. If $r_1$ were the first zero of $F_1$, then $F_1'(r_1)\leq0$, whereas \eqref{riccati} gives $F_1'(r_1)=2/w>0$. So $F_1>0$.

(b), $c=-1$. Recall that $\mathcal K_{n,-1}=2(n-1)/n$. Since $n\geq3$, $F_0=-2n/(n+2)>-\mathcal K_{n,-1}$. At a point where $F_{-1}=-\mathcal K_{n,-1}$, the bracket in \eqref{riccati} equals
\begin{align*}
&-1+\frac{(n+2)(n-1)}{n^2}-\frac{(n-2)(n-1)^2}{n^2}w^2=\frac{(n-2)\big(1-(n-1)^2w^2\big)}{n^2}>0,
\end{align*}
because $w<1/(n-1)$. So $F_{-1}$ cannot reach $-\mathcal K_{n,-1}$ from above: at a first contact point we would have $F_{-1}'\leq 0$, contradicting \eqref{riccati}.

(c) For $c=1$, $u>0$ by (b). Hence $mw<1/n$, and $w'=1-(n-1)mw>1/n$. (Where $m\leq0$ one even has $w'\geq1$.) For $c=-1$, (b) gives $u=nw^2F/2>-(n-1)w^2$. Therefore $nmw<1+(n-1)w^2$ and $w'>(1-(n-1)^2w^2)/n$. On $(0,r_0]$ we have $w\leq1/(nh)$, which gives the last assertion.

(d) Let $g=F-F_0$ and $\mathfrak q=g/w^2$. Since $c=(n+2)F_0/(2n)$, \eqref{riccati} reads $g'=-(n+2)g/(nw)-(n-2)wF^2/2$. This is equivalent to
\begin{equation}\label{qeq}
w\,\mathfrak q'=-\bar\beta \mathfrak q-\frac{n-2}{2}F^2,\qquad \bar\beta=2w'+\frac{n+2}{n}\geq\frac{n+2}{n},
\end{equation}
where the inequality comes from (c). By Lemma \ref{regularity}, $F-F_0$ and $w^2$ are analytic in $r^2$ with $w^2=r^2/n^2+O(r^4)$. So $\mathfrak q$ is analytic in $r^2$, and $w\mathfrak q'=O(r^2)$. Letting $r\to0$ in \eqref{qeq}, with $w'(0)=1/n$, gives $\mathfrak q(0)=-n(n-2)F_0^2/(2(n+4))\in(-Q,0)$.

If $r_1>0$ were the first zero of $\mathfrak q$, then $\mathfrak q'(r_1)\geq0$. But at $r_1$ we have $F=F_0$, so \eqref{qeq} gives $w\mathfrak q'=-(n-2)F_0^2/2<0$. Hence $\mathfrak q<0$, that is $F<F_0$, for $r>0$. Together with (b) this gives $0<F_1<F_0=\mathcal K_{n,1}$ and $-\mathcal K_{n,-1}<F_{-1}<F_0<0$, hence $|F|\leq \mathcal K_{n,c}$; since $k=F\circ r$ and $k(0)=F_0$, the assertions on $k$ follow. Now, on any interval where $\mathfrak q<-Q$, we get $w\mathfrak q'>Q(n+2)/n-(n-2)\mathcal K_{n,c}^2/2=0$ by $|F|\leq \mathcal K_{n,c}$, so $\mathfrak q$ never drops below $-Q$.

Finally, the bracket in \eqref{riccati} equals $w^2\big[-\frac{n+2}{2n}\mathfrak q-\frac{n-2}{4}F^2\big]$. The two summands have opposite signs, and each has absolute value at most $(n-2)\mathcal K_{n,c}^2/4$.

(e) Since $k=F\circ r$, $s=n^2h^2w^2$ and $ds/dr=2n^2h^2ww'$, \eqref{riccati} gives
\[
s\,k'(s)=\frac{1}{w'}\left[c-\frac{n+2}{2n}F-\frac{n-2}{4}w^2F^2\right].
\]
By (d), (c) and $w\leq1/(nh)$ on $(0,r_0]$,
$|sk'|\leq (n-2)\mathcal K_{n,c}^2w^2\,n/(4\theta)\leq\sigma_{n,c}(h)$.

(f) By the proof of (d), $F-F_0=\mathfrak q w^2$ with $\mathfrak q$ analytic in $r^2$ and $\mathfrak q(0)<0$. Hence $F'=\mathfrak q'w^2+2\mathfrak q ww'=2\mathfrak q(0)ww'+O(r^3)<0$ for small $r>0$, since $w=r/n+O(r^3)$ and $w'\to1/n$. Suppose $r_1$ is the first zero of $F'$. By \eqref{riccati} the bracket vanishes at $r_1$; in particular $F(r_1)\neq0$, since otherwise the bracket would equal $c$. Differentiating \eqref{riccati} at $r_1$ and using $F'(r_1)=0$,
\[
F''(r_1)=\frac{2}{w}\cdot\Big(-\frac{n-2}{2}\,ww'F^2\Big)=-(n-2)\,w'(r_1)F(r_1)^2<0
\]
by (c). But $F'<0$ on $(0,r_1)$ and $F'(r_1)=0$ force $F''(r_1)\geq0$, a contradiction. Hence $F'<0$ on the whole domain. Since $k=F\circ r$ and $dr/ds>0$, also $k'<0$.
\end{proof}

\begin{proposition}\label{modelzero}
The tensor $B$ of \eqref{Bdef} is smooth, symmetric, and trace-free. It agrees with \eqref{old} when $n=2$, and vanishes identically on the rotational reference sphere for every $n\geq2$.
\end{proposition}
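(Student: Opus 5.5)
The proof has four parts: smoothness, symmetry with trace-freeness, the case $n=2$, and vanishing on the reference sphere.

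\emph{Smoothness.} By Lemma \ref{regularity}, $k_{n,c,h}$ is real analytic on $[0,1)$ and smooth up to $s=1$. The function $s=|T|^2$ is smooth on $\Sigma$ and takes values in $[0,1]$. Hence $k_{n,c,h}(|T|^2)$ is smooth. The one point needing care is $s=1$, i.e.\ where $\nu=0$. There I would use a smooth extension of $k$ past $1$, which is what "smoothly to $s=1$" provides, since $ds/dr>0$ at $r_0$. The composition is then smooth, because $s\le1$ everywhere.

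\emph{Symmetry and trace.} $A$ is self-adjoint and $T\otimes T$ is symmetric, so $B$ is symmetric. For the trace,
\[
\tr(A-h\Id)=nh-nh=0,\qquad \tr\Big(T\otimes T-\tfrac{|T|^2}{n}\Id\Big)=|T|^2-|T|^2=0 ,
\]
so $\tr B=0$.

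\emph{The case $n=2$.} Lemma \ref{regularity} gives $k_{2,c,h}\equiv c$. Also $|T|^2/n=|T|^2/2$ when $n=2$. Substituting both into \eqref{Bdef} gives exactly \eqref{old}.

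\emph{Vanishing on the reference sphere.} This is the main step. Work on the lower hemisphere, at a point away from the poles, and use an orthonormal frame $e_1$ along the meridian, $e_2,\dots,e_n$ tangent to the orbit.
\begin{itemize}
\item $T$ is the tangential projection of $\xi$, so it points along the meridian: $T=\pm\rho e_1$ and $s=\rho^2$.
\item $A=\mathrm{diag}(a,b,\dots,b)$ with $nh=a+(n-1)b$. Hence $A-h\Id$ has eigenvalue $a-h=\frac{n-1}{n}(a-b)$ on $e_1$ and $b-h=-\frac1n(a-b)$ on the orbit directions.
\item $T\otimes T-\frac sn\Id$ has eigenvalue $\frac{n-1}{n}s$ on $e_1$ and $-\frac sn$ on the orbit directions.
\end{itemize}
So $B$ is the diagonal matrix $\mathrm{diag}\big(\tfrac{n-1}{n},-\tfrac1n,\dots,-\tfrac1n\big)$ multiplied by the scalar
\[
2h(a-b)-k(s)\,s .
\]
It remains to check that this scalar vanishes. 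By \eqref{model}, $s=\rho^2=(nhw(r))^2$, so $r(s)$ in \eqref{defk} is exactly the profile parameter $r$ of the point. Definition \eqref{defk} then gives $k(s)\,s=2h(a-b)$, and the scalar is zero.

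\emph{Remaining points.} I would settle the following explicitly.
\begin{enumerate}
\item \emph{Orientation.} The sign of $T$ is irrelevant because $T$ enters only through $T\otimes T$. The formulas for $a,b$ were derived for the upward normal, which is the orientation with $h>0$.
\item \emph{Upper hemisphere.} Reflection in the top horizontal level is an isometry. It preserves $|T|$, $A$ and $h$ (with the compatible orientation), so the vanishing transfers.
\item \emph{Equator and poles.} At the equator $s=1$ and $r=r_0$. At the poles $s=0$ and the point is umbilic with $T=0$, so $B=0$ directly. Alternatively, $B=0$ at both follows by continuity from the open dense set where the frame computation holds.
\end{enumerate}
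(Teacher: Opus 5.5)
Your proposal is correct and follows the paper's proof. Trace-freeness comes from both summands, and smoothness and the $n=2$ case come from Lemma \ref{regularity}. On the model, your eigenvalue computation is the identity $A-h\Id=(a-b)\big(\tfrac{T\otimes T}{s}-\tfrac1n\Id\big)$, which combined with $ks=2h(a-b)$ from \eqref{defk} gives $B=0$; the poles follow by continuity or umbilicity. Your remarks on orientation, the reflected hemisphere, and the smooth extension of $k$ past $s=1$ are accurate details that the paper leaves implicit.
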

\begin{proof}
Both summands in \eqref{Bdef} are trace-free, and Lemma \ref{regularity} supplies smoothness and the $n=2$ statement. On the model, for $s>0$,
\[
A-h\Id=(a-b)\left(\frac{T\otimes T}s-\frac1n\Id\right).
\]
The defining identity $ks=2h(a-b)$ proves $B=0$. Continuity covers the poles.
\end{proof}

\section{The complete equality case}\label{equal-section}
\begin{lemma}\label{local}
Let $n\geq2$ and suppose a CMC immersion in the range \eqref{range} has $B=0$. At every point with $0<|T|^2<1$ its image is locally congruent to an open piece of the reference rotational sphere.
\end{lemma}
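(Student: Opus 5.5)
The plan is to show that $B=0$ forces the shape operator to coincide pointwise with that of the model, written as a function of $s=|T|^2$. We then reconstruct the rotational structure by proving that a suitably defined ``center'' of the horizontal projection is locally constant.

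\textbf{Step 1: algebraic form of $A$.} On the open set $U=\{0<s<1\}$, $B=0$ reads
\[
A=h\Id+\frac{k(s)}{2h}\Big(T\otimes T-\frac sn\Id\Big).
\]
Hence $T$ is a principal direction, and $T^\perp$ is an eigenspace of multiplicity $n-1$. With $r=r(s)$ from \eqref{defk}, the identity $ks=2h(a-b)$ together with $nh=a+(n-1)b$ gives
\[
\lambda:=\ip{AT}{T}/s=h+\tfrac{n-1}{n}(a-b)=a(r(s)),\qquad \mu=h-\tfrac1n(a-b)=b(r(s)).
\]
So the principal curvatures are exactly the model ones $a,b$ evaluated at $r(s)$. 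Since $0<s<1$, $\nu\neq0$ on $U$; shrinking to a connected neighborhood, we fix the sign of $\nu$. If necessary we reverse the orientation of the model comparison so that $\nu>0$ corresponds to the lower hemisphere.

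\textbf{Step 2: the level sets of $s$.} By \eqref{grads}, $\nabla s=2\nu\lambda T$. Whenever $a\neq0$, $ds\neq0$ and the level sets of $s$ (equivalently of $r$) are hypersurfaces of $\Sigma$ with unit normal $e=T/|T|$. Using \eqref{compat}, $\nabla_XT=\nu AX=\nu\mu X$ for $X\perp T$. Hence these level sets are totally umbilic in $\Sigma$, with umbilicity factor $\nu\mu/|T|$. Moreover, by \eqref{compat} and \eqref{codA}, $e$ is a geodesic field along which $r$ has derivative of modulus one. The calculation $dr/d\varsigma=1$ should follow from $ds/dr=2(nh)^2ww'$ and $a=nhw'$, with $|T|=\rho=nhw$. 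The points where $a$ vanishes are isolated levels, and we handle them afterwards by continuity and analyticity.

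\textbf{Step 3: the center map (main obstacle).} Let $\pi:\QQ^n_c\times\RR\to\QQ^n_c$ and $t$ be the projections. Let $\eta(x)$ be the unit horizontal vector at $\pi(x)$ proportional to $-N^*$ (equivalently to $T^*$, since $N^*=-\nu T^*/|T|^2\cdot|T|^2/\ldots$ is parallel to $T^*$ up to sign). Define
\[
p(x)=\exp_{\pi(x)}\big(r(s(x))\,\eta(x)\big).
\]
The claim is that $dp=0$ on $U$. Differentiate in the two kinds of directions:
\begin{itemize}
\item In directions $X\perp T$: $\pi_*X=X$ is horizontal and orthogonal to $\eta$, and $r$ is constant. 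The derivative of $\eta$ is $-\mu X/\sin\vartheta=-mX$, using $b=\rho m$. This is precisely the derivative of a Jacobi field vanishing at $p$ with $f_c'/f_c=m$, so $dp(X)=0$.
\item In the direction $T$: the horizontal motion $|T|\,\eta$-component combines with $dr$ and with the rotation of $\eta$ at rate $a$, via the identity $a=\rho'$. The result should again cancel.
\end{itemize}
I expect this bookkeeping of Jacobi fields in $\QQ^n_c$ to be the delicate part. If it becomes unwieldy, I would instead verify that the distance function $d(p_0,\pi(x))=r(s(x))$ for the point $p_0=p(x_0)$, by checking that both sides satisfy the same first-order system along $e$-geodesics and along level sets.

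\textbf{Step 4: conclusion.} With $p$ constant, $\pi(\Sigma\cap U)$ lies in the region where $d(p,\cdot)=r(s)$, and $\Sigma$ is transverse to the radial level sets. Moreover $\partial_\varsigma t=\ip{e}{\xi}=|T|=\rho$ along the meridian geodesics, so $dt/dr=\rho/\sqrt{1-\rho^2}=Z'(r)$. Thus $t=Z(r)+\text{const}$, and a vertical translation together with an isometry of $\QQ^n_c$ moving $p$ to the pole identifies the piece with an open subset of the reference sphere.
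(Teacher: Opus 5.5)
Your strategy is the paper's: read off the model principal curvatures $a,b$, build a center in the base, show it is locally constant, then integrate the height. The paper's point $p=f_c'(r)x_0-f_c(r)E$ in the quadric model is exactly $\exp_{x_0}(-rE)$, with $E=(e_1-|T|\xi)/\nu$. However, the step you flag as the main obstacle is left as ``should again cancel'', and the ingredients you list for it would not make it cancel.

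\emph{The sign of $\eta$.} With $h>0$ one has $N^*=-|T|E$, and $E$ points away from the center (on the model $E=\partial_r$). So $\eta\propto -N^*$ gives $\eta=E$, and $\exp_{\pi(x)}(rE)$ is not constant. Your formula $\nabla_X\eta=-mX$ holds only for $\eta=-E$, i.e.\ $\eta\propto N^*$.

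\emph{The rate of $r$.} It is $e(r)=\nu$, not $\pm1$. By \eqref{grads}, $e(|T|)=\nu a$, and $|T|=nhw(r)$ with $a=nhw'(r)>0$ everywhere. In particular there are no levels with $a=0$ to treat separately. Your Step 4 in fact needs $dr/d\varsigma=\sqrt{1-\rho^2}$, which contradicts the claim in Step 2.

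\emph{The missing idea.} The center stays fixed because $\eta$ does \emph{not} rotate along $e$: $\bar\nabla_eE=0$. The rotation at rate $a$ belongs to $e$ and $N$ inside the vertical plane spanned by $E$ and $\xi$, not to $\eta$. To prove it, use $\nabla_ee=0$ and the Gauss formula to get $\bar\nabla_ee=aN=a(-|T|E+\nu\xi)$. Compare this with the derivative of $e=\nu E+|T|\xi$, using $e(\nu)=-a|T|$ and $e(|T|)=\nu a$; what remains is $\nu\bar\nabla_eE=0$. Then $\pi_*e=\nu E$ moves the base point along the geodesic through the center at speed $\nu$, while $r$ grows at the same rate $\nu$. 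Hence $\exp_{\pi(x)}(-rE)$ is constant; equivalently, in the quadric, $e(p)=\nu(f_c''+cf_c)x_0=0$. With the rate $e(r)=1$ from your Step 2, or with a rotating $\eta$, the center would drift, so your second bullet fails as written.

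Once these three points are corrected, your first bullet (the Jacobi field $J(\tau)=(f_c'(\tau)-m(r)f_c(\tau))P_\tau X$ vanishing at $\tau=r$) and your height integration in Step 4 go through as in the paper.
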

\begin{proof}
Put $s=|T|^2$, $t=\sqrt s$, and $e_1=T/t$. The equation $B=0$ gives
\begin{equation}\label{a0}
A=h\Id+\frac{k(s)}{2h}\left(T\otimes T-\frac{s}{n}\Id\right),
\quad Ae_1=a(s)e_1,\quad A|_{e_1^\perp}=b(s)\Id,
\end{equation}
where
\begin{equation}\label{ab}
a(s)=h+\frac{(n-1)k(s)s}{2hn},\qquad
b(s)=h-\frac{k(s)s}{2hn}.
\end{equation}
By \eqref{defk} these coincide with the functions $a,b$ in \eqref{model}. Equations \eqref{compat}--\eqref{grads} imply, for $X\perp e_1$,
\begin{equation}\label{levelid}
X(t)=X(\nu)=0,\quad e_1(t)=\nu a,\quad
e_1(\nu)=-at,\quad\nabla_{e_1}e_1=0.
\end{equation}
Since $T$ is the gradient of the height function, the distribution $e_1^\perp$ is the tangent bundle to horizontal level hypersurfaces. Since $\nu\neq0$, the field $E=(e_1-t\xi)/\nu$ is horizontal and has unit length, because $|e_1-t\xi|^2=1-t^2=\nu^2$. Moreover
\begin{equation}\label{E}
N=-tE+\nu\xi,\qquad e_1=\nu E+t\xi.
\end{equation}
Let $X\perp e_1$. Then $X\perp\xi$, and $X(t)=X(\nu)=0$. Hence $-bX=-AX=\bar\nabla_XN=-t\,\bar\nabla_XE$, which gives
\begin{equation}\label{umb}
\nabla_X^{\QQ_c}E=\frac{b(s)}{t}X
=m(r(s))X.
\end{equation}
Next we show $\bar\nabla_{e_1}E=0$. The Gauss formula and \eqref{levelid} give $\bar\nabla_{e_1}e_1=\ip{Ae_1}{e_1}N=aN$. Differentiating $e_1=\nu E+t\xi$ along $e_1$ and using $e_1\nu=-at$, $e_1t=\nu a$ gives $-atE+\nu\bar\nabla_{e_1}E+\nu a\xi=a(-tE+\nu\xi)$. Hence $\nu\bar\nabla_{e_1}E=0$, and $\nu\neq0$ on the present neighborhood. Since $E$ is horizontal and the metric is a product, $\bar\nabla_VE=\nabla^{\QQ_c}_{\pi_*V}E$, where $\pi$ is the projection to the base. Note also $\pi_*X=X$ for $X\perp e_1$, and $\pi_*e_1=\nu E$.

Put $r=r(s)$ on the neighborhood. Since $t=nhw(r)$ and $a=nhw'(r)>0$, the identity $e_1t=\nu a$ gives $e_1r=\nu$, and $X(r)=0$ for $X\perp e_1$. Realize $\QQ_c^n$ as the quadric $\{\ip{x_0}{x_0}=c\}$ in $\RR^{n+1}$ (Euclidean for $c=1$; Lorentzian, upper sheet, for $c=-1$), with position vector $x_0$. For fields $V,W$ tangent to $\QQ_c^n$, the flat derivative is $D_WV=\nabla^{\QQ_c}_WV-c\ip WV x_0$. Define
\[
p=f_c'(r)\,x_0-f_c(r)\,E .
\]
For $X\perp e_1$, \eqref{umb} gives $D_XE=m(r)X$, so $X(p)=f_c'X-f_cmX=0$. Along $e_1$, we have $D_{e_1}x_0=\nu E$ and $D_{e_1}E=\nabla^{\QQ_c}_{\nu E}E-c\nu x_0=-c\nu x_0$. Therefore
\[
e_1(p)=\nu f_c''x_0+\nu f_c'E-\nu f_c'E+c\nu f_cx_0=\nu\,(f_c''+cf_c)\,x_0=0 .
\]
Thus $p$ is constant near the point. Moreover $\ip pp=c(f_c'^2+cf_c^2)=c$, so $p\in\QQ_c^n$, and $\ip{x_0}{p}=cf_c'(r)$. This says exactly that the base point $x_0$ lies at distance $r=r(s)$ from the fixed point $p$. (For $n=2$ the same computation applies to the level curves.)

For the height $z$, we have $X(z)=0$ and $e_1(z)=\ip{e_1}\xi=t$. Let $Z(r)=\int_0^r\rho/\sqrt{1-\rho^2}$ be the model profile of \eqref{model}. It is finite at $r_0$, since $1-\rho\sim a(1)(r_0-r)$ with $a(1)>0$. Let $\varsigma=\operatorname{sign}\nu$. Since $\rho(r(s))=t$ and $\sqrt{1-t^2}=|\nu|$,
\[
e_1\big(z-\varsigma Z(r)\big)=t-\varsigma\frac{t}{|\nu|}\,\nu=0,\qquad X\big(z-\varsigma Z(r)\big)=0 .
\]
So $z-\varsigma Z(r)$ is constant near the point. Hence a neighborhood is mapped into the set $\{(x_0,z):\ d(x_0,p)=r,\ z=z_c+\varsigma Z(r)\}$. For $\varsigma=1$ this is the lower hemisphere of a reference sphere. For $\varsigma=-1$ it is the upper hemisphere of the reference sphere whose lowest point has height $z_c-2Z(r_0)$. Both sets are hypersurfaces, so the neighborhood is mapped onto an open piece of a reference sphere. Local congruence follows.
\end{proof}

\begin{lemma}\label{analytic}
Let a connected CMC immersion into the real-analytic ambient manifold $\QQ_c^n\times\RR$ agree on a nonempty open subset, up to a fixed ambient isometry, with an open subset of an embedded real-analytic CMC hypersurface $\mathcal S$ which is a closed subset of $\QQ_c^n\times\RR$. Then its entire image lies in that same $\mathcal S$. The reference spheres, being compact and embedded, satisfy this hypothesis.
\end{lemma}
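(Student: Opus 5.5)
The plan is a standard open–closed argument on $\Sigma$, resting on two facts. First, a CMC immersion into a real-analytic Riemannian manifold is real analytic: CMC is an elliptic quasilinear equation with analytic coefficients for a local graph, so Morrey's theorem applies. Second, two analytic objects that agree on an open set agree on the whole connected domain.

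Write $x:\Sigma\to\QQ_c^n\times\RR$ for the immersion, composed with the inverse of the fixed ambient isometry so that $x$ itself agrees with $\mathcal S$ near some point. Let $W\subset\Sigma$ be the set of points $p$ having a neighborhood $U$ such that $x(U)\subset\mathcal S$. By hypothesis $W$ is open and nonempty, and it remains to prove that $W$ is closed.

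Take $p\in\overline W$. By continuity of $x$ and closedness of $\mathcal S$, we have $x(p)\in\mathcal S$. Since $\mathcal S$ is an embedded analytic hypersurface, near $x(p)$ it is the zero set of an analytic function $\phi$ with $d\phi\neq0$. Equivalently, in suitable analytic coordinates around $x(p)$, namely Fermi coordinates over $\mathcal S$ with $\mathcal S=\{y_{n+1}=0\}$, we may write $\mathcal S$ this way.

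The composite $\phi\circ x$ is real analytic on a connected coordinate ball $U\ni p$. It vanishes on the nonempty open set $U\cap W$: this set is nonempty because $p\in\overline W$, and open because $W$ is open. By the identity theorem, $\phi\circ x\equiv0$ on $U$, so $x(U)\subset\mathcal S$ and $p\in W$. Connectedness of $\Sigma$ then gives $W=\Sigma$.

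The main obstacle is the analyticity input. I would handle it as follows. Near any point, write $x(\Sigma)$ locally as a graph $y_{n+1}=u(y')$ in analytic Fermi coordinates. The equation $H[u]=nh$ is then a quasilinear elliptic PDE with analytic coefficients, and $u$ is smooth because the immersion is smooth. Hence $u$ is analytic, and $x$ is analytic in the induced coordinates on $\Sigma$, which supplies an analytic structure on $\Sigma$ compatible with $x$.

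Orientation must be consistent: on $W$ the two mean curvature vectors agree. Since $\mathcal S$ is embedded and analytic, the identity theorem applied to $\phi\circ x$ does not require matching normals, so no sign issue arises in the above argument. It only requires that $\phi\circ x$ be analytic on a connected neighborhood.

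For the last sentence of the statement, a reference sphere is compact, hence closed in the ambient space. It is embedded as a surface of revolution over a geodesic sphere. It is real analytic: it is CMC and embedded, so the same elliptic regularity applies. Alternatively, one can read analyticity off the profile in \eqref{model}, which is analytic away from the poles and the equator, and then invoke regularity of CMC hypersurfaces at those points.
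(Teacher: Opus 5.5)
Your argument is correct. Its skeleton matches the paper's: Morrey's theorem gives local analyticity, and an open--closed argument on the set of points having a neighborhood mapped into $\mathcal S$ is closed by the identity theorem.

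The difference lies in the closedness step. The paper first shows, by continuity along $p_j\to p$, that $dx(T_p\Sigma)=T_{x(p)}\mathcal S$. Then near $x(p)$ both $x(U_p)$ and $\mathcal S$ are graphs $u$, $v$ over the same hyperplane in a coordinate cylinder, and the identity theorem is applied to $u-v$. You instead take a local analytic defining function $\phi$ of $\mathcal S$ on an ambient neighborhood $O$ of $x(p)$. Embeddedness enters here exactly as in the paper's statement that $\mathcal S\cap Z$ is the graph of $v$. You then apply the identity theorem to $\phi\circ x$ on a connected neighborhood $U$ of $p$ with $x(U)\subset O$. This needs only that $x$ be analytic in some chart near $p$, so the tangent-plane matching becomes unnecessary; it follows a posteriori.

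For the same reason, in your analyticity paragraph the local graph should be taken over $dx(T_q\Sigma)$, for instance in normal coordinates at $x(q)$ as in the paper, rather than in Fermi coordinates over $\mathcal S$. You have not shown beforehand that $x$ is a graph over $\mathcal S$ near $p$. With that reading the step is fine. Your remark on orientation is superfluous but harmless.
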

\begin{proof}
\emph{Analyticity.} Let $M=\QQ_c^n\times\RR$, whose metric is real analytic in suitable coordinates. Fix $q\in\Sigma$. In normal coordinates of $M$ centered at $x(q)$, which are analytic, a neighborhood of $q$ is mapped onto the graph of a function $u$ over a ball $W$ of the tangent hyperplane $\mathbf P=dx(T_q\Sigma)$. The CMC condition is a quasilinear elliptic equation for $u$ with real-analytic coefficients. By Morrey's theorem \cite{Morrey}, $u$ is real analytic. The same applies to $\mathcal S$. The reference spheres are, in addition, analytic because their profiles solve analytic ODEs.

\emph{Continuation.} Replace $\mathcal S$ by its image under the fixed isometry. Let $V\subset\Sigma$ be the set of points having a neighborhood mapped into $\mathcal S$. By hypothesis $V$ is open and nonempty. We show that $V$ is closed.

Let $p_j\in V$ with $p_j\to p$. Since $\mathcal S$ is a closed subset of the ambient space, $x(p)\in\mathcal S$. The tangent spaces satisfy $dx(T_{p_j}\Sigma)=T_{x(p_j)}\mathcal S$, and by continuity $dx(T_p\Sigma)=T_{x(p)}\mathcal S=:\mathbf P$.

Choose normal coordinates at $x(p)$, identify $\mathbf P$ with a hyperplane of the coordinate space, and let $\nu_\mathbf P$ be a unit normal to $\mathbf P$. Choose a small ball $W\subset\mathbf P$ centered at $0$ and $\epsilon>0$, and let $Z=\{y+t\nu_\mathbf P:\ y\in W,\ |t|<\epsilon\}$ be the corresponding small coordinate cylinder around $x(p)$. We take $W$ and $\epsilon$ so small that the following hold. A neighborhood $U_p$ of $p$ is mapped diffeomorphically onto the graph in $Z$ of an analytic function $u:W\to(-\epsilon,\epsilon)$. Because $\mathcal S$ is embedded and tangent to $\mathbf P$ at $x(p)$, $\mathcal S\cap Z$ is the graph of an analytic function $v:W\to(-\epsilon,\epsilon)$. Only this local cylinder is used.

For large $j$, $p_j\in U_p$, and a neighborhood of $p_j$ inside $U_p$ is mapped into $\mathcal S\cap Z$, that is, into the graph of $v$. Its projection is an open subset of $W$ on which $u=v$. Since $W$ is connected, the identity theorem gives $u\equiv v$ on $W$. Hence $U_p$ is mapped into $\mathcal S$, and $p\in V$.

Since $\Sigma$ is connected, $V=\Sigma$.
\end{proof}

\begin{proof}[Proof of Theorem \ref{main-equality}]
The first assertions follow from Proposition \ref{modelzero}. Suppose $n\geq2$, $B=0$, and $\Sigma$ connected. The set $U=\{0<|T|^2<1\}$ is nonempty. Otherwise the continuous function $|T|^2$ takes values only in $\{0,1\}$ and is constant. If it is zero, the height function is constant, so the immersion lies in a totally geodesic horizontal slice. Then $A=0$, contradicting $h>0$. If it is one, then $\nu\equiv0$, and \eqref{compat} gives $\ip{AX}{T}=-X(\nu)=0$ for all $X$, so $AT=0$. But \eqref{a0} at $s=1$ gives $AT=a(1)T$ with $a(1)=nh\,w'(r_0)>0$.

Lemma \ref{local} identifies an open subset with a portion of one fixed reference sphere $\mathcal S$. Lemma \ref{analytic} places the entire image in this same sphere, including points at which $T=0$ or $\nu=0$. Since $\mathcal S$ is an embedded submanifold, $x$ factors through a smooth immersion $\Sigma\to\mathcal S$ between manifolds of the same dimension. It is therefore a local isometry for the induced metrics. A local isometry from a complete connected Riemannian manifold to a connected one is a covering map \cite[Ch.~7]{dC}, so $\Sigma\to\mathcal S$ is a surjective covering. Since $\mathcal S\cong\SSS^n$ is simply connected for $n\geq2$, the covering has one sheet.
\end{proof}

\section{The linear Codazzi defect and a Simons identity}\label{simons-section}
\begin{proposition}\label{defect}
For $B$ as in \eqref{Bdef}, $s=|T|^2$, $k=k_{n,c,h}(s)$, and $D(X,Y)=(\nabla_XB)Y-(\nabla_YB)X$, one has
\begin{align}\label{linearD}
D(X,Y)=\frac{\nu}{2h}\Big\{&
-k\big(\ip YT BX-\ip XT BY\big)\notag\\
&+\frac{2(k+sk')}{n}\big(\ip{BX}{T}Y-\ip{BY}{T}X\big)\notag\\
&-2k'\big(\ip{BX}{T}\ip YT-\ip{BY}{T}\ip XT\big)T\Big\}.
\end{align}
In particular $D$ vanishes whenever $B$ does.
\end{proposition}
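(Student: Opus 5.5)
The plan is a direct computation. Split $B=2h(A-h\Id)-P$ with $P=k(s)\big(T\otimes T-\tfrac sn\Id\big)$, so that $D_B=2h\,D_A-D_P$. Here $D_A(X,Y)=-c\nu\big(\ip XT Y-\ip YT X\big)$ by \eqref{codA}, since $h$ is constant.

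\textbf{Step 1: differentiate $P$.} Using \eqref{compat} and \eqref{grads}, i.e.\ $\nabla_XT=\nu AX$, $X(s)=2\nu\ip{AX}{T}$ and $X(k)=2\nu k'\ip{AX}{T}$, we get
\[
(\nabla_XP)Y=2\nu k'\ip{AX}{T}\Big(\ip YT T-\tfrac sn Y\Big)+k\nu\Big(\ip{AX}{Y}T+\ip YT AX-\tfrac2n\ip{AX}{T}Y\Big).
\]
Antisymmetrizing in $X,Y$ kills the term $\ip{AX}{Y}T$, because $A$ is symmetric.

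\textbf{Step 2: substitute for $A$.} Since $B$ is a linear function of $A$, we can invert \eqref{Bdef}:
\[
A=h\Id+\frac{1}{2h}B+\frac{k}{2h}\Big(T\otimes T-\frac sn\Id\Big).
\]
Insert this into $D_P$. The contribution of the $B$-part is linear in $B$. Collecting its coefficients gives the three groups in \eqref{linearD}:
\begin{itemize}
\item the $k\ip YT BX$ terms come from $k\nu\ip YT AX$;
\item the $\ip{BX}{T}Y$ terms come from the $-\tfrac{2}{n}k\nu\ip{AX}{T}Y$ term together with the $-\tfrac sn$ part of the $k'$ term, which produces the factor $2(k+sk')/n$;
\item the $\ip{BX}{T}\ip YT\,T$ term comes from the $k'$ term.
\end{itemize}
Checking the signs and the overall factor $\nu/(2h)$ is routine bookkeeping.

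\textbf{Step 3: the $B$-independent part must vanish.} This part is obtained by inserting the model shape operator $A_0=h\Id+\frac{k}{2h}(T\otimes T-\frac sn\Id)$, whose eigenvalues are $a(s),b(s)$ of \eqref{ab}, into $2hD_A-D_P$. Using $A_0T=a\,T$ and $A_0X=b\,X$ on $T^\perp$, every surviving term is a multiple of $\nu\big(\ip XT Y-\ip YT X\big)$. The reason is that $\ip{A_0X}{T}=a\ip XT$, and the $T\otimes T$ pieces cancel under antisymmetrization. The scalar coefficient is
\[
-2hc+kb+\tfrac2n k a+\tfrac{2s}{n}k'a-\text{(matching terms)},
\]
and its vanishing is a scalar identity between $k$, $sk'$, $s$ and $h$.

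I expect this cancellation to be the main obstacle. I would verify it by converting the Riccati equation \eqref{riccati} of Lemma \ref{bounds}(a) to the variable $s$, as in the proof of Lemma \ref{bounds}(e). There one has $s=n^2h^2w^2$ and $w'=a/(nh)$ with $a=h+(n-1)ks/(2hn)$, which gives
\[
\frac{a}{nh}\,s k'=c-\frac{n+2}{2n}k-\frac{n-2}{4n^2h^2}\,s k^2 .
\]
Multiplying out shows that this is exactly the required vanishing of the coefficient. For $n=2$ it reduces to $k\equiv c$, which is consistent with Lemma \ref{regularity}.

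As an independent sanity check: on the reference sphere $B\equiv0$ by Proposition \ref{modelzero}, so $D\equiv0$ there, and the model realizes every $s\in(0,1)$ with $\nu\neq0$. Since the $B$-independent part is a universal scalar function of $s$ times $\nu(\ip XT Y-\ip YT X)$, it must therefore vanish identically.

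The final assertion, that $D=0$ whenever $B=0$, is then immediate from \eqref{linearD}. At points where $T=0$ (so $s=0$), the formula persists by continuity, since $k$ and $sk'$ are smooth by Lemma \ref{regularity}.
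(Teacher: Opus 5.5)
Your proposal is correct and follows the paper's proof. The product rule gives the uncancelled defect \eqref{uncancelled}, and the substitution $A=A_0+B/(2h)$ isolates the part linear in $B$. The $A_0$-part is exactly $\nu\big[-2hc+kb+\frac{2a}{n}(k+sk')\big]\big(\ip XT Y-\ip YT X\big)$, with no further ``matching terms'', and it vanishes by \eqref{kODE}.

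Your derivation of \eqref{kODE} from the Riccati equation of Lemma \ref{bounds}(a),(e) differs only mildly from the paper's direct computation from $m'+m^2=-c$; the paper itself notes this equivalence in Lemma \ref{model-signs}. Your model-sphere ``sanity check'' is in fact already a complete, computation-free proof of the cancellation: $B\equiv0$ on the reference sphere, and that sphere realizes every $s\in(0,1)$ with $\nu\neq0$.
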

\begin{proof}
For $P=T\otimes T$ one has
$(\nabla_XP)Y=\nu\ip{AX}{Y}T+\nu\ip YT AX$.
The first term is symmetric in $X,Y$. The product rule applied to $k(s)(P-s\Id/n)$, together with \eqref{codA} and \eqref{grads}, gives
\begin{align}\label{uncancelled}
D(X,Y)=\nu\Big\{&
-2hc(\ip XT Y-\ip YT X)
-k(\ip YT AX-\ip XT AY)\notag\\
&+\frac{2(k+sk')}{n}(\ip{AX}{T}Y-\ip{AY}{T}X)\notag\\
&-2k'(\ip{AX}{T}\ip YT-\ip{AY}{T}\ip XT)T\Big\}.
\end{align}
Substitute $A=A_0+B/(2h)$, where $A_0$ is the right side of \eqref{a0}. In a basis $e_1=T/\sqrt s$, $e_i\perp T$, the expression involving only $A_0$ vanishes on $(e_i,e_j)$, $i,j>1$, and on $(e_1,e_i)$ equals
\[
\nu\sqrt s\left[-2hc+kb+\frac{2a}{n}(k+sk')\right]e_i.
\]
This bracket is zero by the scalar identity
\begin{equation}\label{kODE}
\frac{a}{hn}(k+sk')+\frac{kb}{2h}=c.
\end{equation}
Here is a direct independent verification. From \eqref{model} and $s=(nhw)^2$,
\[
2a\frac{db}{ds}-\frac{(a-b)b}{s}
=\frac{w'm+wm'}{w}-\frac{(w'-wm)m}{w}
=m'+m^2=-c.
\]
Substituting \eqref{ab} into the left side and rearranging gives \eqref{kODE}. Thus the $A_0$ contribution vanishes. The remaining contribution of $B/(2h)$ to \eqref{uncancelled} is precisely \eqref{linearD}. Smoothness covers $s=0$.
\end{proof}

The linearity of the defect characterizes the coefficient.

\begin{proposition}\label{canonical}
Let $\tilde k\in C^0([0,1])\cap C^1((0,1])$, and let $\tilde B$ be defined by \eqref{Bdef} with $\tilde k$ in place of $k_{n,c,h}$. Let $\tilde a,\tilde b$ be given by \eqref{ab} with $\tilde k$, and put
\[
\Xi(s)=-2hc+\tilde k\tilde b+\frac{2\tilde a}{n}(\tilde k+s\tilde k'),\qquad 0<s\leq1.
\]
Then, on every CMC immersion with mean curvature $h$ and wherever $T\neq0$,
\begin{equation}\label{Dgeneral}
\tilde D(X,Y)=\nu\,\Xi(s)\big(\ip XT Y-\ip YT X\big)+\tilde L(\tilde B)(X,Y),
\end{equation}
where $\tilde L(\tilde B)$ denotes the right side of \eqref{linearD} with $(k,B)$ replaced by $(\tilde k,\tilde B)$. Moreover $\Xi\equiv0$ on $(0,1]$ if and only if $\tilde k=k_{n,c,h}$.
\end{proposition}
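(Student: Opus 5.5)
The decomposition \eqref{Dgeneral} will come from repeating the computation in the proof of Proposition \ref{defect} with $\tilde k$ in place of $k$. The only difference is that the $A_0$-contribution is no longer cancelled. Nothing in the derivation of \eqref{uncancelled} used the specific form of $k$; it used only \eqref{codA}, \eqref{grads}, the product rule, and the fact that $\tilde k\in C^1$ where $s>0$. So \eqref{uncancelled} holds verbatim with $\tilde k,\tilde k'$.

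Next write $A=\tilde A_0+\tilde B/(2h)$, where $\tilde A_0=h\Id+\frac{\tilde k}{2h}(T\otimes T-\frac sn\Id)$. Its eigenvalues are $\tilde a$ on $e_1=T/\sqrt s$ and $\tilde b$ on $e_1^\perp$. Because \eqref{uncancelled} is affine in $A$, the $\tilde B/(2h)$ part reproduces $\tilde L(\tilde B)$ exactly. The $\tilde A_0$ part, together with the $-2hc$ term, vanishes on pairs $(e_i,e_j)$ with $i,j>1$. On $(e_1,e_i)$ it equals $\nu\sqrt s\,\Xi(s)e_i$. The tensor $\nu\Xi(\ip XT Y-\ip YT X)$ is skew in $X,Y$, vanishes on $e_1^\perp\times e_1^\perp$, and on $(e_1,e_i)$ equals $\nu\sqrt s\,\Xi\,e_i$. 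Since both sides are skew-symmetric bilinear in $(X,Y)$, they agree. This gives \eqref{Dgeneral} wherever $T\neq0$.

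For the characterization, the direction $\tilde k=k_{n,c,h}\Rightarrow\Xi\equiv0$ is the scalar identity \eqref{kODE}, since $2h\cdot$\eqref{kODE} is exactly $\Xi=0$. For the converse, I will substitute \eqref{ab} into $\Xi$ and read $\Xi\equiv0$ as a first-order linear ODE for $g(s)=s\tilde k(s)$ on $(0,1]$. Since $\tilde k+s\tilde k'=g'$, $\tilde a=h+\frac{(n-1)g}{2hn}$ and $\tilde b=h-\frac{g}{2hn}$, the condition becomes
\[
\frac{2}{n}\Big(h+\frac{(n-1)g}{2hn}\Big)g'+\frac{g}{s}\Big(h-\frac{g}{2hn}\Big)=2hc .
\]
This is nonlinear in $g$ but regular wherever $\tilde a\neq0$. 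A cleaner route is to set $\tilde b=h-g/(2hn)$, so that $\tilde a=nh-(n-1)\tilde b$ and $\tilde a-\tilde b=g/(2h)$. By the derivation of \eqref{kODE}, $\Xi=0$ is then equivalent to
\[
2\tilde a\,\frac{d\tilde b}{ds}-\frac{(\tilde a-\tilde b)\tilde b}{s}=-c .
\]
This is an explicit ODE $\tilde b'=G(s,\tilde b)$, with $G$ smooth away from $\tilde a=0$, and the model $b(s)$ solves it.

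The main obstacle is uniqueness: the ODE is singular at $s=0$, so there is no initial condition there in the usual sense. I will use continuity of $\tilde k$ at $0$, which gives $g(s)=s\tilde k(s)\to0$, so $\tilde b(s)\to h$ and $\tilde a\to h$. I will then compare with the model through the difference $\delta=g-g_{\rm model}=s(\tilde k-k)$. Subtracting the two equations, $\delta$ satisfies a linear equation of the form $\alpha(s)\delta'+\big(\tfrac{1}{s}\big(h+O(g)\big)+O(|g'|)\big)\delta=0$ with $\alpha\to 2h/n>0$. Integrating gives $|\delta(s)|=|\delta(s_1)|\exp\!\big(\int_s^{s_1}(\frac{n}{2s}+O(\cdot))\big)\sim C s^{-n/2}$ near $0$. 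This is unbounded unless $C=0$, which contradicts $\delta=s(\tilde k-k)\to0$.

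The delicate points are controlling the $O(\cdot)$ terms. They involve $g'=\tilde k+s\tilde k'$, and a bound on $s\tilde k'$ is not given a priori. I would handle this by first showing from the ODE itself that $\Xi=0$ forces $g'$ to stay bounded near $0$. Solving for $g'$ expresses it as a continuous function of $(s,g/s)=(s,\tilde k)$, and $\tilde k$ is bounded. I would also need to check that $\tilde a$ stays positive on all of $(0,1]$, or else apply uniqueness on the maximal interval near $0$ and propagate with standard Picard uniqueness wherever the equation is regular.
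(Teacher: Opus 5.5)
Your proposal is correct and follows essentially the paper's route. The same splitting $A=\tilde A_0+\tilde B/(2h)$ yields \eqref{Dgeneral}, and the converse reduces to a first-order ODE that is singular at $s=0$, whose $1/s$ coefficient forces any nonzero difference from the model to blow up, followed by local-uniqueness continuation on the rest of $(0,1]$.

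Subtracting the two equations exactly gives a linear equation for $\delta=s(\tilde k-k)$ with coefficient $\frac{n}{2s}+O(1)$, which even makes the paper's preliminary step $\tilde k(0)=F_0$ unnecessary. In the continuation step you should make one point explicit, as the paper does: wherever $\tilde k=k$ one has $\tilde a=a=nhw'>0$. So the equation is regular at the endpoint of the maximal interval of agreement, and Picard uniqueness extends that interval.
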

The $C^1$ regularity on $(0,1]$ is needed to define the defect $\tilde D$; continuity at $s=0$ is used to determine $\tilde k(0)$. The proposition is a uniqueness statement within the class of tensors of the form \eqref{Bdef} with such a coefficient.
\begin{proof}
The derivation of \eqref{uncancelled} and the splitting $A=\tilde A_0+\tilde B/(2h)$ use only that $\tilde k$ is $C^1$. The $\tilde A_0$-part is an antisymmetric vector-valued form. It vanishes on $(e_i,e_j)$ with $i,j>1$ and equals $\nu\sqrt s\,\Xi(s)e_i$ on $(e_1,e_i)$. This is exactly the first term in \eqref{Dgeneral}.

By Proposition \ref{defect}, $\Xi\equiv0$ for $\tilde k=k_{n,c,h}$. Conversely, suppose $\Xi\equiv0$. Since $\tilde k$ is continuous at $0$, $\tilde a(s)=h+\frac{(n-1)s\tilde k(s)}{2hn}\to h>0$, so $\tilde a>0$ on some interval $(0,s_0]$. There we may divide by $\tilde a$, and $\Xi\equiv0$ becomes
\[
s\tilde k'=\Psi(s,\tilde k),\qquad
\Psi(s,\kappa)=\frac{n\,(2hc-\tilde b\kappa)}{2\tilde a}-\kappa,
\]
where $\tilde a=h+\frac{(n-1)s\kappa}{2hn}$ and $\tilde b=h-\frac{s\kappa}{2hn}$.
Here $\Psi$ is smooth near $\{s=0\}$, and $\Psi(0,\kappa)=nc-(n+2)\kappa/2$. If $\tilde k(0)\neq F_0$, then $s\tilde k'(s)$ tends to a nonzero limit, so $\tilde k$ is unbounded as $s\to0$. This is a contradiction. Hence $\tilde k(0)=F_0=k(0)$.

Let $v=\tilde k-k$. Since $\partial_\kappa\Psi(0,F_0)=-(n+2)/2$, there is $s_1>0$ such that $s\,(v^2)'\leq-\frac{n+2}{2}v^2$ on $(0,s_1]$. So $s^{(n+2)/2}v(s)^2$ is nonincreasing there. It tends to $0$ as $s\to0$, which forces $v\equiv0$ on $(0,s_1]$.

On $[s_1,1]$ we argue near the graph of the known solution $k$. At a point $s_*$ with $\tilde k(s_*)=k(s_*)$ we have $\tilde a(s_*)=a(s_*)=nhw'(r(s_*))>0$, so $\tilde a>0$ nearby. There $\Xi\equiv0$ is again equivalent to $\tilde k'=\Psi(s,\tilde k)/s$, whose right side is locally Lipschitz; the same holds for $k$. By uniqueness, $v=0$ near $s_*$. The set where $v=0$ is therefore open and closed in $[s_1,1]$, and nonempty, so $v\equiv0$. No sign condition on $\tilde a$ is assumed away from the graph of $k$.
\end{proof}

\begin{lemma}\label{weitzen}
Let $B$ be any symmetric trace-free tensor on a Riemannian $n$-manifold and put $D_{ijk}=\nabla_iB_{jk}-\nabla_jB_{ik}$. In a local orthonormal frame diagonalizing $B$, with eigenvalues $\beta_i$,
\begin{align}\label{pointwise}
\frac12\Delta|B|^2={}&|\nabla B|^2+
\sum_{i<j}K_{ij}(\beta_i-\beta_j)^2
+B_{jk}\nabla_iD_{ijk}
+B_{jk}\nabla_j(\diver B)_k .
\end{align}
If the manifold is closed, integration yields \eqref{main-int}.
\end{lemma}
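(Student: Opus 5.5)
The plan is to run the classical Simons computation for the rough Laplacian of $B$, but without ever invoking the Codazzi property. At the single place where Codazzi symmetry would normally be used, I will write $\nabla_iB_{jk}=\nabla_jB_{ik}+D_{ijk}$ and keep the defect. Then I commute covariant derivatives, which brings in curvature, and integrate by parts.

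\emph{Step 1 (the Laplacian).} Start from
\[
\tfrac12\Delta|B|^2=|\nabla B|^2+B_{jk}\nabla_i\nabla_iB_{jk}.
\]
Differentiating the identity $\nabla_iB_{jk}=\nabla_jB_{ik}+D_{ijk}$ in the direction $e_i$ and summing over $i$ gives
\[
\nabla_i\nabla_iB_{jk}=\nabla_i\nabla_jB_{ik}+\nabla_iD_{ijk}.
\]
This accounts for the term $B_{jk}\nabla_iD_{ijk}$ in \eqref{pointwise}.

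\emph{Step 2 (commuting derivatives).} Apply the Ricci identity for a $2$-tensor to $\nabla_i\nabla_jB_{ik}$. With the curvature convention of Section \ref{geometry}, this produces $\nabla_j\nabla_iB_{ik}=\nabla_j(\diver B)_k$ plus two curvature terms, each of the form $R\ast B$. One term comes from contracting the first index of $B$ (a Ricci term), and the other from the second index (a full Riemann term). I then evaluate $B_{jk}$ times these curvature terms in an orthonormal frame that diagonalizes $B$ at the point. Only the components $R_{ijji}=K_{ij}$ survive, and the contraction becomes
\[
\sum_{i,j}K_{ij}\bigl(\beta_j^2-\beta_i\beta_j\bigr)=\sum_{i<j}K_{ij}(\beta_i-\beta_j)^2 .
\]
Together with Step 1, this gives \eqref{pointwise}.

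The main obstacle is bookkeeping of signs. The order of indices in the Ricci identity, the sign convention for $R(X,Y)Z$, and the placement of the index of $B$ that is contracted with $\diver$ all have to be consistent. I will check the outcome on a round sphere, where $K_{ij}\equiv1$ and the curvature term must be $n|B|^2$ for trace-free $B$. Trace-freeness is not needed for the pointwise identity itself. Symmetry of $B$ is used to identify $\nabla_iB_{ik}$ with $(\diver B)_k$.

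\emph{Step 3 (integration).} On a closed manifold, $\int\Delta|B|^2=0$. Integrating by parts,
\[
\int B_{jk}\nabla_iD_{ijk}=-\int\nabla_iB_{jk}\,D_{ijk}.
\]
Since $D_{ijk}=-D_{jik}$, antisymmetrizing gives
\[
\nabla_iB_{jk}D_{ijk}=\tfrac12(\nabla_iB_{jk}-\nabla_jB_{ik})D_{ijk}=\tfrac12|D|^2.
\]
Similarly,
\[
\int B_{jk}\nabla_j(\diver B)_k=-\int\nabla_jB_{jk}(\diver B)_k=-\int|\diver B|^2 ,
\]
using the symmetry of $B$. Substituting these two integrals into the integrated form of \eqref{pointwise} yields \eqref{main-int}. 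The frame-dependent sum over $i<j$ is a globally well-defined function, because it equals the invariant curvature contraction from Step 2, so it can be integrated.
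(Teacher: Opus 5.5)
Your proposal is correct and follows the paper's proof essentially step for step: the paper also writes $\Delta B_{jk}=\sum_i\nabla_iD_{ijk}+\nabla_j(\diver B)_k+\sum_i(\nabla^2_{ij}-\nabla^2_{ji})B_{ik}$, evaluates the commutator term in a frame diagonalizing $B$ to get $\sum_{i,j}K_{ij}\beta_j(\beta_j-\beta_i)$, and integrates by parts exactly as in your Step 3. The sign bookkeeping you defer is settled there by the Ricci identity $\nabla^2_{ij}B_{kl}-\nabla^2_{ji}B_{kl}=-B(R(e_i,e_j)e_k,e_l)-B(e_k,R(e_i,e_j)e_l)$, which gives exactly the Ricci and Riemann contributions you describe.
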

\begin{proof}
We use index notation with respect to a local orthonormal frame and write $\nabla^2_{ij}B_{kl}=(\nabla^2B)(e_i,e_j;e_k,e_l)$, where $\nabla^2_{X,Y}=\nabla_X\nabla_Y-\nabla_{\nabla_XY}$. With our convention, $\nabla^2_{X,Y}-\nabla^2_{Y,X}=R(X,Y)$ acting as a derivation. Hence the Ricci identity reads
\[
\nabla^2_{ij}B_{kl}-\nabla^2_{ji}B_{kl}=-B\big(R(e_i,e_j)e_k,e_l\big)-B\big(e_k,R(e_i,e_j)e_l\big).
\]
By definition of $D$ we have $\nabla_iB_{jk}=\nabla_jB_{ik}+D_{ijk}$. Therefore
\begin{align*}
\Delta B_{jk}&=\sum_i\nabla^2_{ii}B_{jk}=\sum_i\nabla_iD_{ijk}+\sum_i\nabla^2_{ij}B_{ik}\\
&=\sum_i\nabla_iD_{ijk}+\nabla_j(\diver B)_k+\sum_i\big(\nabla^2_{ij}-\nabla^2_{ji}\big)B_{ik}.
\end{align*}
Here we used that contraction commutes with $\nabla$, so $\sum_i\nabla^2_{ji}B_{ik}=\nabla_j(\diver B)_k$.

Now take the frame to diagonalize $B$ at the point, $Be_a=\beta_ae_a$, and set $K_{ii}=0$. Using the Ricci identity and $\ip{R(e_i,e_j)e_i}{e_j}=-K_{ij}$,
\begin{align*}
\sum_{i,j,k}B_{jk}\big(\nabla^2_{ij}-\nabla^2_{ji}\big)B_{ik}
&=\sum_{i,j}\beta_j^2K_{ij}-\sum_{i,j}\beta_i\beta_jK_{ij}\\
&=\sum_{i,j}K_{ij}\beta_j(\beta_j-\beta_i)=\sum_{i<j}K_{ij}(\beta_i-\beta_j)^2 .
\end{align*}
Since $\frac12\Delta|B|^2=|\nabla B|^2+\sum_{j,k}B_{jk}\Delta B_{jk}$, this proves \eqref{pointwise}.

On a closed manifold, apply the divergence theorem to the vector fields $B_{jk}D_{ijk}$ and $B_{jk}(\diver B)_k$ (index $i$, resp.\ $j$, free). This gives
\[
\int B_{jk}\nabla_iD_{ijk}=-\int(\nabla_iB_{jk})D_{ijk}=-\frac12\int\sum_{i,j,k}D_{ijk}^2=-\frac12\int|D|^2,
\]
where we used $D_{ijk}=-D_{jik}$, so that $(\nabla_iB_{jk})D_{ijk}=\frac12(\nabla_iB_{jk}-\nabla_jB_{ik})D_{ijk}$. By the symmetry of $B$, $\sum_j\nabla_jB_{jk}=(\diver B)_k$, and so
\[
\int B_{jk}\nabla_j(\diver B)_k=-\int|\diver B|^2 .
\]
Integrating \eqref{pointwise} gives \eqref{main-int}.
\end{proof}
\begin{proof}[Proof of Theorem \ref{main-identity}]
Apply Lemma \ref{weitzen} to the trace-free tensor of Proposition \ref{modelzero}. Proposition \ref{defect} supplies its explicit defect.
\end{proof}

\section{Explicit estimates and the gap}\label{gap-section}
Throughout this section $n\geq3$, $h$ lies in \eqref{range}, $\mathcal K=\mathcal K_{n,c}$ and $\sigma=\sigma_{n,c}(h)$ are as in \eqref{Kn}--\eqref{sigma}, and $c_-=\max\{0,-c\}$. For the reader's convenience we collect the notation.

\medskip
\begin{center}
\small
\begin{tabular}{ll}
$k=k_{n,c,h}(s)$, $F_c$, $F_0=\frac{2nc}{n+2}$ & model coefficient, \eqref{defk}--\eqref{kzero}\\
$\mathcal K=\mathcal K_{n,c}$, $\theta=\theta_{n,c}(h)$ & \eqref{Kn}; $|k|\leq \mathcal K$ by Lemma \ref{bounds}(d)\\
$\sigma=\sigma_{n,c}(h)$ & \eqref{sigma}; $|sk'|\leq\sigma$\\
$Q=\frac{n(n-2)\mathcal K^2}{2(n+2)}$ & Lemma \ref{bounds}(d)\\
$\delta_D,\ \delta_{\diver},\ \varepsilon,\ a_0,\ a_1$ & \eqref{constants}\\
$x_+$ & \eqref{xplus}\\
$\zeta$, $\Lambda$, $q_{n,c,h}$, $\mathcal P_{n,c,h}$, $R_*$, $X_{\rm conv}$ & Section \ref{sharp-section}\\
$I_{n,c}(h)$, $G$, $\Theta$, $\Pi$, $x_{\rm opt}$ & Section \ref{opt-section}
\end{tabular}
\end{center}
\medskip

Define
\begin{equation}\label{constants}
\begin{aligned}
\delta_D&=\frac{\sqrt2}{4}\left[\mathcal K+\frac{2\sqrt{n-1}}{n}(\mathcal K+\sigma)+2\sigma\right],\\
\delta_{\diver}&=\frac14\left[\frac{(n-2)\mathcal K+2\sigma}{n}+2\sigma\right],\qquad
\varepsilon=\frac{\delta_D^2}{2}+\delta_{\diver}^2,\\
a_0&=nh^2-nc_--\frac1{h^2}\left(\frac{(n-1)\mathcal K^2}{4n}+\varepsilon\right),\\
a_1&=\frac{(n-2)\sqrt n}{2\sqrt{n-1}}+\frac{\mathcal K}{2h^2}\sqrt{\frac{n-1}{n}}.
\end{aligned}
\end{equation}

\begin{lemma}\label{error}
The divergence of $B$ is
\begin{equation}\label{divB}
\diver B=\frac{\nu}{2h}\left[\frac{(2-n)k+2sk'}{n}\,BT-2k'\ip{BT}{T}\,T\right],
\end{equation}
and
\begin{equation}\label{error1}
|D|\leq\frac{\delta_D}{h}|B|,\qquad |\diver B|\leq\frac{\delta_{\diver}}{h}|B|,\qquad
\frac12|D|^2+|\diver B|^2\leq\frac{\varepsilon}{h^2}|B|^2.
\end{equation}
\end{lemma}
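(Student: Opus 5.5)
The plan is to read off $\diver B$ by contracting \eqref{linearD}, and then to bound $D$ and $\diver B$ term by term using the uniform bounds $|k|\le\mathcal K$ and $|sk'|\le\sigma$ of Lemma \ref{bounds}, together with $|\nu|\le1$ and $|T|^2=s\le1$.

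\emph{Divergence.} Since $B$ is symmetric and trace-free, contracting the definition of $D$ gives
\[
\sum_i\ip{D(e_i,Y)}{e_i}=\sum_i\ip{(\nabla_{e_i}B)Y}{e_i}-Y(\tr B)=\ip{\diver B}{Y}.
\]
I would therefore take the trace in $X$ of each term in \eqref{linearD}:
\begin{enumerate}[label=(\roman*)]
\item $\tr\big(X\mapsto \ip YT BX-\ip XT BY\big)=0-\ip{BY}{T}$;
\item $\tr\big(X\mapsto\ip{BX}{T}Y-\ip{BY}{T}X\big)=\ip{BY}{T}-n\ip{BY}{T}=-(n-1)\ip{BY}{T}$;
\item $\tr\big(X\mapsto\ip{BX}{T}\ip YT\,T-\ip{BY}{T}\ip XT\,T\big)=\ip{BT}{T}\ip YT-s\ip{BY}{T}$.
\end{enumerate}
Collecting, the coefficient of $\ip{BY}{T}$ is
\[
k-\tfrac{2(n-1)(k+sk')}{n}+2sk'=\frac{(2-n)k+2sk'}{n},
\]
and the remaining term is $-2k'\ip{BT}{T}\ip YT$. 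This is \eqref{divB}, up to sign conventions that I will check carefully, since the stated form fixes them.

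\emph{Bounding $\diver B$.} Write $k'=(sk')/s$. The term $2k'\ip{BT}{T}T$ has norm at most $2|sk'|\,|B|\,|T|^3/s\le2\sigma|B|$, because $|T|^3/s=\sqrt s\le1$. The first term is bounded by $\frac{(n-2)\mathcal K+2\sigma}{n}|B|$. Together with the prefactor $|\nu|/(2h)\le 1/(2h)$, this gives $\frac1{4h}\cdot 2\big[\tfrac{(n-2)\mathcal K+2\sigma}{n}+2\sigma\big]|B|$. The stated $\delta_{\diver}$ carries $1/4$ rather than $1/2$, so a sharper estimate is needed. I would try to get the extra factor $2$ from $|\nu|\,|T|\le\tfrac12$. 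If that is unavailable, I would instead use $|BT|\le\sqrt{(n-1)/n}\,|B|\,|T|$ for trace-free $B$. I will pin this down when writing the proof; this constant-chasing is the main obstacle.

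\emph{Bounding $D$.} Each of the three antisymmetrized terms in \eqref{linearD} is estimated in the Hilbert--Schmidt norm $|D|^2=\sum D_{ijk}^2$. The natural tool is the elementary bound $|u\wedge v|\le\sqrt2\,|u|\,|v|$ for antisymmetrized products, which explains the $\sqrt2/4$ prefactor in $\delta_D$. For $\ip{BX}{T}Y-\ip{BY}{T}X$, the norm is $\sqrt{2(n-1)}\,|BT|$, which produces the $\sqrt{n-1}/n$ factor. The $T$-terms again carry factors of $|T|$ that absorb $1/s$, so $k'$ enters only through $sk'$, with the same bookkeeping as before.

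Once $|D|\le\frac{\delta_D}{h}|B|$ and $|\diver B|\le\frac{\delta_{\diver}}{h}|B|$ are established, the final inequality in \eqref{error1} follows at once from the definition $\varepsilon=\delta_D^2/2+\delta_{\diver}^2$.
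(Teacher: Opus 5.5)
Your route is the paper's. You contract \eqref{linearD} to get \eqref{divB}, then bound $D$ and $\diver B$ term by term using $|k|\le\mathcal K$ and $|sk'|\le\sigma$. Your trace computation is correct and gives \eqref{divB} exactly, so no sign question remains.

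The gap is the step you postponed, and it is the actual content of the lemma. With $|\nu|\le1$ and $\sqrt s\le1$, your bounds are twice the stated ones. This happens for $D$ as well as for $\diver B$: the $\sqrt2$ from antisymmetrization, times the prefactor $1/(2h)$, gives $\sqrt2/2$, not the $\sqrt2/4$ in $\delta_D$. So the $\sqrt2$ bound alone does not account for $\delta_D$.

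The fix is the first one you suggest, $|\nu|\,|T|\le\frac12(\nu^2+|T|^2)=\frac12$. It only works if every term keeps a factor $|T|$, and your displayed estimate throws that factor away. Keep it as follows.
\begin{enumerate}
\item $|2k'\ip{BT}{T}T|\le2|k'|\,s\,|T|\,|B|=2|sk'|\,|T|\,|B|$, not $2|sk'|\,|B|$.
\item The form $(\ip{BX}{T}\ip YT-\ip{BY}{T}\ip XT)T$ has norm at most $\sqrt2\,|BT|\,|T|^2\le\sqrt2\,|B|\,|T|^3$.
\item The form $\ip{BX}{T}Y-\ip{BY}{T}X$ has norm exactly $\sqrt{2(n-1)}\,|BT|\le\sqrt{2(n-1)}\,|B|\,|T|$.
\item The form $\ip YT BX-\ip XT BY$ has squared norm $2|T|^2|B|^2-2|BT|^2\le2|T|^2|B|^2$.
\end{enumerate}
For the last form, the $\sqrt2$ does not follow from the vector bound $|u\wedge v|\le\sqrt2|u||v|$ alone. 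It needs this exact identity, whose cross term $|BT|^2$ has the favorable sign.

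With these, every term in both $D$ and $\diver B$ carries the factor $|\nu|\,|T|\le\frac12$, which yields exactly $\delta_D$ and $\delta_{\diver}$. The inequality for $\frac12|D|^2+|\diver B|^2$ then follows from the definition of $\varepsilon$, as you say.

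Your fallback would not work. The bound $|BT|\le\sqrt{(n-1)/n}\,|B|\,|T|$ gains at most the factor $\sqrt{(n-1)/n}\ge\sqrt{2/3}>\frac12$. It also gives nothing for the term $\ip YT BX-\ip XT BY$.
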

\begin{proof}
Because $\tr B=0$, $(\diver B)_j=\sum_iD_{iji}$. Contracting \eqref{linearD} term by term gives $k\,BT$, $-\frac{2(n-1)}{n}(k+sk')BT$, and $2sk'BT-2k'\ip{BT}{T}T$. Adding these gives \eqref{divB}. For $n=2$, where $k\equiv c$ and $k'\equiv0$, it vanishes, as it should.

For the norms, let $\{e_i\}$ be orthonormal. Then
\begin{gather*}
\sum_{i,j}\big|\ip{e_j}{T}Be_i-\ip{e_i}{T}Be_j\big|^2=2|T|^2|B|^2-2|BT|^2,\\
\sum_{i,j}\big|(BT)_ie_j-(BT)_je_i\big|^2=2(n-1)|BT|^2,
\end{gather*}
and $\sum_{i,j}\big((BT)_iT_j-(BT)_jT_i\big)^2\leq2|BT|^2|T|^2$. With $|BT|\leq|B||T|$ this gives
\[
|D|\leq\frac{|\nu||T|}{2h}|B|\left[\sqrt2|k|+\frac{2\sqrt{2(n-1)}}{n}|k+sk'|+2\sqrt2|sk'|\right].
\]
Similarly, \eqref{divB} gives $|\diver B|\leq\frac{|\nu||T|}{2h}|B|\big[|(n-2)k-2sk'|/n+2|sk'|\big]$. Now use $|\nu||T|\leq\frac12$, $|k|\leq \mathcal K$ and $|sk'|\leq\sigma$ (Lemma \ref{bounds}).
\end{proof}

\begin{lemma}\label{curvterm}
At each point, in an orthonormal frame diagonalizing $B$, with $T_i=\ip T{e_i}$,
\begin{equation}\label{curvexact}
\begin{aligned}
\sum_{i<j}K_{ij}(\beta_i-\beta_j)^2={}&nh^2|B|^2+\frac n2\sum_i\beta_i^3-\frac{|B|^4}{4h^2}\\
&+\Big(c(n-s)-\frac{ks}2\Big)|B|^2+n\Big(\frac k2-c\Big)|BT|^2\\
&-\frac{k}{2h^2}|B|^2\ip{BT}{T}-\frac{k^2s}{4h^2}|BT|^2.
\end{aligned}
\end{equation}
Therefore
\begin{equation}\label{curvlower}
\sum_{i<j}K_{ij}(\beta_i-\beta_j)^2\geq|B|^2\left[nh^2-nc_--\frac{(n-1)\mathcal K^2}{4nh^2}-a_1|B|-\frac{|B|^2}{4h^2}\right].
\end{equation}
\end{lemma}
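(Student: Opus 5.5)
The proof has two parts: an exact computation from the Gauss equation \eqref{gauss}, and then a term-by-term lower bound.

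\emph{Exact identity.} Substitute $A=A_0+B/(2h)$, i.e.
\[
A=h\Id+\frac{B}{2h}+\frac{k}{2h}\Big(T\otimes T-\frac sn\Id\Big).
\]
For $i\neq j$ in the frame diagonalizing $B$, \eqref{gauss} gives
\[
K_{ij}=c(1-T_i^2-T_j^2)+A_{ii}A_{jj}-A_{ij}^2,
\]
where $A_{ii}=h+\frac{\beta_i}{2h}+\frac{k}{2h}(T_i^2-\frac sn)$ and $A_{ij}=\frac{k}{2h}T_iT_j$. I then expand $\sum_{i<j}K_{ij}(\beta_i-\beta_j)^2=\frac12\sum_{i,j}K_{ij}(\beta_i-\beta_j)^2$ using the symmetric sums
\[
\sum_{i,j}(\beta_i-\beta_j)^2=2n|B|^2,\qquad
\sum_{i,j}\beta_i(\beta_i-\beta_j)^2=n\sum\beta_i^3+|B|^2\cdot0,\qquad
\sum_{i,j}\beta_i\beta_j(\beta_i-\beta_j)^2=-2|B|^4,
\]
all of which use $\sum\beta_i=0$. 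The terms weighted by $T_i^2$ produce $|BT|^2=\sum\beta_i^2T_i^2$, $\ip{BT}{T}=\sum\beta_iT_i^2$, and $\sum_i\beta_i T_i^2\cdot\sum\beta_j^2$.

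The diagonal contributions with $i=j$ must be handled carefully. They vanish in $(\beta_i-\beta_j)^2$, but they do not vanish in sums such as $\sum_{i,j}T_i^2(\ldots)$, and this is exactly where the $-k^2s|BT|^2/(4h^2)$ term and the coefficient $n(k/2-c)$ come from. The cross term $-A_{ij}^2$ contributes $-\frac{k^2}{4h^2}\sum_{i,j}T_i^2T_j^2(\beta_i-\beta_j)^2/2$. This expands to $-\frac{k^2}{4h^2}(s|BT|^2-\ip{BT}{T}^2)$, and the $\ip{BT}{T}^2$ pieces from the product $A_{ii}A_{jj}$ should cancel against it. Collecting everything should give \eqref{curvexact}. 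I expect this bookkeeping to be the main obstacle; the remaining steps are mechanical.

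\emph{Lower bound.} Starting from \eqref{curvexact}, I bound each term separately.
\begin{itemize}
\item For the cubic term, Okumura's inequality for trace-free tensors gives $|\sum\beta_i^3|\leq\frac{n-2}{\sqrt{n(n-1)}}|B|^3$. Hence $\frac n2\sum\beta_i^3\geq-\frac{(n-2)\sqrt n}{2\sqrt{n-1}}|B|^3$, which is the first part of $a_1$.
\item The term $-\frac{k}{2h^2}|B|^2\ip{BT}{T}$ is bounded below using $|\ip{BT}{T}|\leq\sqrt{\frac{n-1}{n}}|B|s\leq\sqrt{\frac{n-1}{n}}|B|$ (the largest eigenvalue of a trace-free tensor) and $|k|\leq\mathcal K$. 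This gives the second part of $a_1$.
\item The remaining zero-order terms are $c(n-s)|B|^2-\frac{ks}{2}|B|^2+n(\frac k2-c)|BT|^2-\frac{k^2s}{4h^2}|BT|^2$. Writing $|BT|^2=\lambda s|B|^2$ with $\lambda\in[0,\frac{n-1}{n}]$, this is $|B|^2$ times an expression affine in $\lambda$. It therefore suffices to check the endpoints $\lambda=0$ and $\lambda=\frac{n-1}{n}$, using the sign information $0<k<\mathcal K$ for $c=1$ and $|k|<\mathcal K$, $F_0<0$ for $c=-1$ from Lemma \ref{bounds}(d).
\end{itemize}
In the last item the $k^2$ term gives at worst $-\frac{(n-1)\mathcal K^2}{4nh^2}$. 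The bracket $c(n-s)-\frac{ks}2+\lambda s n(\frac k2-c)$ equals
\[
c\big(n-s(1+n\lambda)\big)+\frac{ks}{2}(n\lambda-1).
\]
At $\lambda=\frac{n-1}{n}$ this is $c(n-ns)+\frac{ks(n-2)}{2}$, and at $\lambda=0$ it is $c(n-s)-\frac{ks}2$. For $c=1$, I need both to be $\geq0$: the first endpoint is clearly nonnegative, and the second is $\geq n-1-\mathcal K/2>0$. For $c=-1$, I need both to be $\geq-n$: the endpoints are $-n(1-s)+\frac{ks(n-2)}2\geq -n+s\big(n-\frac{(n-2)\mathcal K}{2}\big)\geq-n$ and $-n+s-\frac{ks}{2}\geq-n+s(1-\frac{\mathcal K}2)\geq -n$, using $\mathcal K<2$. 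This yields the term $-nc_-$.

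Combining the three bounds with the term $-|B|^4/(4h^2)$ gives \eqref{curvlower}.
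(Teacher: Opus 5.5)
Your proposal is correct and is essentially the paper's proof. The paper also inserts $A=h\Id+\frac{B}{2h}+\frac{k}{2h}\big(T\otimes T-\frac sn\Id\big)$ into the Gauss equation and organizes the expansion through $\sum_{i,j}u_iv_j(\beta_i-\beta_j)^2=U_2V_0-2U_1V_1+U_0V_2$, with $U_p=\sum_iu_i\beta_i^p$ and $V_p=\sum_jv_j\beta_j^p$; summing over all $(i,j)$ needs no special care, since the summand vanishes for $i=j$. The cancellation you anticipate does occur, because the $k^2$-part of $\sum_{i<j}A_{ii}A_{jj}(\beta_i-\beta_j)^2$ is exactly $-\frac{k^2}{4h^2}\ip{BT}{T}^2$, so together with $-A_{ij}^2$ only $-\frac{k^2s}{4h^2}|BT|^2$ survives. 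For \eqref{curvlower} the paper uses the same estimates (Okumura, $\max_i\beta_i^2\leq\frac{n-1}{n}|B|^2$, $|k|\leq\mathcal K$) and chooses the extremal value of $|BT|^2$ according to the sign of $\frac k2-c$, which is what your endpoint check of the affine function of $\lambda$ does.
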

\begin{proof}
Write $\gamma_i=\beta_i/(2h)$, $\kappa=k/(2h)$, and $A=\alpha+E$, where $\alpha=\operatorname{diag}(h+\gamma_i)$ and $E=\kappa(T\otimes T-\frac sn\Id)$. For $i\neq j$ we have $A_{ij}=E_{ij}=\kappa T_iT_j$. By \eqref{gauss},
\[
K_{ij}=c(1-T_i^2-T_j^2)+\alpha_i\alpha_j+\alpha_iE_{jj}+\alpha_jE_{ii}+E_{ii}E_{jj}-E_{ij}^2.
\]
Multiply by $(\beta_i-\beta_j)^2$ and sum over $i<j$. All sums reduce to the elementary identity
\begin{equation}\label{star}
\sum_{i,j}u_iv_j(\beta_i-\beta_j)^2=U_2V_0-2U_1V_1+U_0V_2,
\end{equation}
where $U_p=\sum_iu_i\beta_i^p$ and $V_p=\sum_jv_j\beta_j^p$. It is applied with $u,v\in\{\mathbf 1,(\gamma_i),(T_i^2),(E_{ii})\}$. Here $\sum_i\beta_i=0$, $\sum\beta_i^2=|B|^2$, $\sum T_i^2=s$, $\sum T_i^2\beta_i=\ip{BT}T$, $\sum T_i^2\beta_i^2=|BT|^2$ and $\sum_iE_{ii}=0$. For instance, $\sum_{i<j}(\beta_i-\beta_j)^2=n|B|^2$, $\sum_{i<j}(T_i^2+T_j^2)(\beta_i-\beta_j)^2=n|BT|^2+s|B|^2$, $\sum_{i<j}(\beta_i+\beta_j)(\beta_i-\beta_j)^2=n\sum\beta_i^3$ and $\sum_{i<j}\beta_i\beta_j(\beta_i-\beta_j)^2=-|B|^4$. One obtains:
\begin{enumerate}[label=(\alph*)]
\item the $c$-terms give $c[(n-s)|B|^2-n|BT|^2]$;
\item $\alpha_i\alpha_j=h^2+h(\gamma_i+\gamma_j)+\gamma_i\gamma_j$ gives $nh^2|B|^2+\frac n2\sum\beta_i^3-|B|^4/(4h^2)$, as in \cite{AdC};
\item the part $h(E_{ii}+E_{jj})$ gives $\frac k2(n|BT|^2-s|B|^2)$;
\item the part $\gamma_iE_{jj}+\gamma_jE_{ii}$ gives $-\frac{k}{2h^2}|B|^2\ip{BT}{T}$, since $\sum_j E_{jj}=0$ and $\sum_i\gamma_i=0$;
\item $E_{ii}E_{jj}-E_{ij}^2=\kappa^2[s^2/n^2-(s/n)(T_i^2+T_j^2)]$ gives $-\kappa^2s|BT|^2$.
\end{enumerate}
This proves \eqref{curvexact}.

For \eqref{curvlower}, use that a trace-free $B$ satisfies $\max_i\beta_i^2\leq\frac{n-1}{n}|B|^2$. Hence $|BT|^2\leq\frac{n-1}{n}s|B|^2$ and $|\ip{BT}T|\leq s\sqrt{\frac{n-1}n}|B|$. Okumura's inequality \cite{Okumura} gives $\sum\beta_i^3\geq-\frac{n-2}{\sqrt{n(n-1)}}|B|^3$.

Now consider the two terms in the second group of \eqref{curvexact}. If $c=1$, then $0\leq k<2$, so $\frac k2-1<0$, and they are bounded below by
\[
\Big(n-s-\frac{ks}{2}+(n-1)\big(\tfrac k2-1\big)s\Big)|B|^2=\Big(n(1-s)+\frac{(n-2)ks}{2}\Big)|B|^2\geq0.
\]
If $c=-1$, then $\frac k2+1>0$ because $|k|<2$, and they are bounded below by $-(n-s)|B|^2\geq-n|B|^2$. The remaining terms are estimated with $|k|\leq \mathcal K$ and $s\leq1$.
\end{proof}

\begin{proof}[Proof of Theorem \ref{main-gap}]
By \eqref{main-int}, \eqref{error1} and \eqref{curvlower},
\[
0\geq\int_\Sigma\Big\{|\nabla B|^2+|B|^2\,P(|B|)\Big\},\qquad
P(x)=a_0-a_1x-\frac{x^2}{4h^2}.
\]
Since $a_0>0$, the polynomial $P$ is nonnegative on $[0,x_+]$, where $x_+$ is given by \eqref{xplus}. To allow equality in \eqref{pinch}, we use the slack in \eqref{curvlower}. In its proof the term $-\frac{k^2s}{4h^2}|BT|^2$ was bounded below by $-\frac{(n-1)k^2s^2}{4nh^2}|B|^2$, and then $k^2s^2$ was replaced by $\mathcal K^2$. Keeping the difference, we get
\[
0\geq\int_\Sigma\Big\{|\nabla B|^2+|B|^2\big(P(|B|)+\mathfrak e\big)\Big\},\qquad
\mathfrak e=\frac{(n-1)(\mathcal K^2-k^2s^2)}{4nh^2}.
\]
The function $\mathfrak e$ is continuous and strictly positive on $\Sigma$. For $c=-1$, $|k|<\mathcal K$ everywhere by Lemma \ref{bounds}(d). For $c=1$, $0<k<F_0=\mathcal K$ when $s>0$ by Lemma \ref{bounds}(d), and $k^2s^2=0<\mathcal K^2$ when $s=0$. Under \eqref{pinch}, $P(|B|)\geq0$, hence $\int_\Sigma\mathfrak e|B|^2\leq0$. Thus $B\equiv0$, and Theorem \ref{main-equality} concludes.

For the limit: $\sigma=O(h^{-2})$ and $\varepsilon=O(1)$, so $a_0/h^2\to n$ and $a_1\to a_1^\infty=\frac{(n-2)\sqrt n}{2\sqrt{n-1}}$. Since $(a_1^\infty)^2+n=\frac{n^3}{4(n-1)}$, we get $x_+/h^2\to2\big(\frac{n\sqrt n}{2\sqrt{n-1}}-a_1^\infty\big)=2\sqrt{n/(n-1)}$.

The claim $a_0>0$ for $h^2\geq8$ is Lemma \ref{a0-positive}, and the assertion concerning $x_{\rm opt}$ is Theorem \ref{opt-gap}.
\end{proof}

\begin{lemma}\label{a0-positive}
Let $n\geq3$, $c=\pm1$ and $h^2\geq8$. Then $\sigma<1/7$, $\varepsilon<3/2$, $a_1<\frac{n-1}2+\frac18$, and
\[
a_0>nh^2-n-\frac5{16}>0 .
\]
\end{lemma}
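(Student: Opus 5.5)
The plan is to bound the four quantities directly from \eqref{Kn}, \eqref{sigma} and \eqref{constants}, using only $n\geq3$, $h^2\geq8$, $\mathcal K_{n,c}<2$, and the elementary facts $(n-2)/n<1$ and $((n-1)/n)^2<1$. No geometry is involved; the bounds on $\sigma$, $a_1$ and $a_0$ are one-line estimates, and the bound on $\varepsilon$ is the only place where care is needed.

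\emph{Step 1: $\sigma<1/7$.} For $c=1$ we have $\theta=1$, so $\sigma=(n-2)\mathcal K^2/(4nh^2)<4/(4\cdot8)=1/8$. For $c=-1$ we have
\[
\theta=1-\Big(\frac{n-1}{nh}\Big)^2>1-\frac1{h^2}\geq\frac78 .
\]
Hence $\sigma<(n-2)\mathcal K^2/(28n)<4/28=1/7$.

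\emph{Step 2: $\varepsilon<3/2$.} This is the main obstacle, because crude bounds overshoot. Replacing $\mathcal K$ by $2$, $\sigma$ by $1/7$, and $2\sqrt{n-1}/n$ by its maximum $2\sqrt2/3$ (attained at $n=3$) gives roughly $\varepsilon\approx1.57$, which is too large. The crude bound is lossy because $\mathcal K$ is close to $2$ only for large $n$, whereas $2\sqrt{n-1}/n$ is largest only for small $n$. So I would keep $n$ explicit.
\begin{itemize}
\item Use the sharper bound $\sigma\leq(n-2)\mathcal K_{n,c}^2/(28n)$, valid for both signs of $c$.
\item Insert the exact values $\mathcal K_{n,1}=2n/(n+2)$ and $\mathcal K_{n,-1}=2(n-1)/n$.
\item Then $\delta_D$ and $\delta_{\diver}$ become explicit functions of $n$ alone.
\end{itemize}
I would check $\delta_D^2/2+\delta_{\diver}^2<3/2$ by evaluating a few small cases, say $n=3,4,5$. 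For larger $n$ I would use a monotone bound. For example, $2\sqrt{n-1}/n\leq 2/\sqrt n$ makes the middle term of $\delta_D$ decay. Also $\delta_{\diver}<\tfrac14(2+4/7)$ stays fixed, so the sum tends to roughly $\tfrac12(\sqrt2/4)^2(2+2/7)^2+0.41<3/2$. If the margin is tight at some intermediate $n$, I would treat $c=1$ and $c=-1$ separately there, since $\mathcal K_{n,1}<\mathcal K_{n,-1}$ only for $n\le3$.

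\emph{Step 3: $a_1<\frac{n-1}2+\frac18$.} The first term of $a_1$ satisfies $\frac{(n-2)\sqrt n}{2\sqrt{n-1}}<\frac{n-1}2$. Squaring, this is equivalent to $(n-2)^2n<(n-1)^3$, which reduces to $n^2-n-1>0$. For the second term, $\frac{\mathcal K}{2h^2}\sqrt{(n-1)/n}<\frac{2}{16}=\frac18$.

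\emph{Step 4: the bound on $a_0$.} We have $nc_-\leq n$ and $(n-1)\mathcal K^2/(4n)<1$. Combined with $\varepsilon<3/2$ and $1/h^2\leq1/8$, this gives
\[
a_0>nh^2-n-\frac{1+3/2}{8}=nh^2-n-\frac5{16}.
\]
Since $nh^2\geq8n$, the right-hand side is at least $7n-\frac5{16}>0$.
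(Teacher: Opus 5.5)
Steps 1, 3 and 4 are correct and are essentially the paper's argument. The gap is Step 2. The bound $\varepsilon<3/2$, which is the only nontrivial content of the lemma, is not actually proved. You describe a programme: evaluate $n=3,4,5$, use an unspecified ``monotone bound'' beyond an unspecified $n$, and split $c=\pm1$ ``if the margin is tight''. Your large-$n$ argument only computes the limit as $n\to\infty$, which does not bound $\varepsilon$ for any particular $n$. The programme could be completed, but as written it proves nothing. (In passing: $\mathcal K_{n,1}<\mathcal K_{n,-1}$ holds for every $n\geq3$, since it is equivalent to $n>2$.)

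The detour is also unnecessary, because it rests on an arithmetic slip. Your value $\varepsilon\approx1.57$ comes from bounding $\delta_{\diver}$ by $\frac14\big(2+\frac27+\frac27\big)=\frac9{14}$, that is, from bounding $\frac{(n-2)\mathcal K+2\sigma}{n}$ by $\mathcal K+2\sigma$. With $\mathcal K<2$ and $\sigma<\frac17$ one has instead
\[
\frac{(n-2)\mathcal K+2\sigma}{n}<\frac{2(n-2)}{n}+\frac{2}{7n}<2,
\]
so $\delta_{\diver}<\frac14\big(2+\frac27\big)=\frac47$. The same crude substitutions in $\delta_D$, together with $2\sqrt{n-1}/n\leq2\sqrt2/3$, give $\delta_D<\frac{\sqrt2}{4}\big[2+\frac{2\sqrt2}{3}\cdot\frac{15}{7}+\frac27\big]=\frac{4\sqrt2+5}{7}$. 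Hence
\[
\varepsilon<\frac{(4\sqrt2+5)^2}{98}+\frac{16}{49}=\frac{89+40\sqrt2}{98}<\frac32,
\]
where the last inequality is $40\sqrt2<58$, i.e.\ $3200<3364$. This is the paper's proof. It is uniform in $n\geq3$ and $c=\pm1$ and needs no case analysis. With it in place of your Step 2, your Step 4 goes through as written.
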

\begin{proof}
Since $\mathcal K<2$, we have $\sigma<(n-2)/(nh^2\theta)<1/(8\theta)$; moreover $\theta=1$ for $c=1$ and $\theta\geq1-1/h^2\geq7/8$ for $c=-1$. Hence $\sigma<1/7$. Since $2\sqrt{n-1}/n\leq2\sqrt2/3$ for $n\geq3$, this gives
\[
\delta_D\leq\frac{\sqrt2}{4}\Big[2+\frac{2\sqrt2}{3}\cdot\frac{15}{7}+\frac27\Big]=\frac{4\sqrt2+5}{7},\qquad
\delta_{\diver}\leq\frac14\Big[2+\frac27\Big]=\frac47 .
\]
Here we used $\frac{(n-2)\mathcal K+2\sigma}{n}<\frac{2(n-2)}{n}+\frac{2}{7n}<2$. Hence
\[
\varepsilon\leq\frac{(4\sqrt2+5)^2}{98}+\frac{16}{49}=\frac{89+40\sqrt2}{98}<\frac32,
\]
where the last inequality is equivalent to $40\sqrt2<58$, that is, $3200<3364$. Since $\frac{(n-1)\mathcal K^2}{4n}<1$ and $h^2\geq8$,
\[
a_0>nh^2-n-\frac{1+3/2}{h^2}\geq nh^2-n-\frac{5}{16}\geq7n-\frac5{16}>0.
\]
Also $(n-2)\sqrt n\leq(n-1)^{3/2}$, which is equivalent to $n^2-n-1\geq0$. So $a_1<\frac{n-1}2+\frac18$.
\end{proof}

\begin{proof}[Proof of Corollary \ref{complete}]
Let $h^2\geq8$. First, $|T\otimes T-\frac sn\Id|=s\sqrt{(n-1)/n}\leq1$ and $|k|<2$. Under $|B|\leq h^2/4$, \eqref{Bdef} gives
\[
|A-h\Id|\leq\frac{|B|}{2h}+\frac{|k|}{2h}\leq\frac h8+\frac1h\leq\frac h4.
\]
So all principal curvatures are at least $3h/4$. By \eqref{gauss}, every sectional curvature satisfies $K_\Sigma\geq\frac{9h^2}{16}-1\geq\frac{h^2}{4}$. By the Bonnet--Myers theorem, $\Sigma$ is compact.

It remains to check that $P(h^2/4)>0$, so that $|B|\leq h^2/4<x_+$ and Theorem \ref{main-gap} applies. By Lemma \ref{a0-positive},
\[
P(h^2/4)=a_0-\frac{a_1h^2}{4}-\frac{h^2}{64}
>h^2\Big(\frac{7n+1}{8}-\frac3{64}\Big)-n-\frac5{16}\geq6n+\frac5{16}>0.
\]
\end{proof}

The pinching of Theorem \ref{main-gap} forces strict convexity. Put
\[
\eta_0=\sqrt{\tfrac{n-1}{n}},\qquad \varkappa_1=\frac{\mathcal K}{2hn},\qquad \varkappa_{-1}=\frac{(n-1)\mathcal K}{2hn}.
\]

\begin{proposition}\label{convex-forced}
Let $n\geq3$ and let $h$ be in \eqref{range} with $a_0>0$. At a point where $|B|\leq x_+$, every principal curvature satisfies
\[
\lambda_{\min}(A)\geq h-\frac{\eta_0|B|}{2h}-\varkappa_c>0 .
\]
In particular, under the hypotheses of Theorem \ref{main-gap}, $\Sigma$ is strictly convex.
\end{proposition}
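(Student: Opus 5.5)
We solve \eqref{Bdef} for the shape operator,
\[
A=h\Id+\frac{B}{2h}+\frac{k(s)}{2h}\Big(T\otimes T-\frac sn\Id\Big),
\]
and bound the smallest eigenvalue of each perturbation separately. By Weyl's inequality,
\[
\lambda_{\min}(A)\geq h+\frac{\lambda_{\min}(B)}{2h}+\frac{1}{2h}\lambda_{\min}\Big(k(T\otimes T-\tfrac sn\Id)\Big).
\]

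\emph{The $B$ term.} Since $B$ is trace-free, $\max_i\beta_i^2\leq\frac{n-1}{n}|B|^2$, as already used in the proof of Lemma \ref{curvterm}. Hence $\lambda_{\min}(B)\geq-\eta_0|B|$.

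\emph{The $T$ term.} The operator $T\otimes T-\frac sn\Id$ has eigenvalues $\frac{(n-1)s}{n}$ (once) and $-\frac sn$ (with multiplicity $n-1$). We split by the sign of $c$.
\begin{itemize}
\item[$\bullet$] For $c=1$, Lemma \ref{bounds}(d) gives $0\leq k\leq\mathcal K$. The smallest eigenvalue is then $-ks/n\geq-\mathcal K/n$, contributing $\geq-\varkappa_1$.
\item[$\bullet$] For $c=-1$, $k$ may be negative with $|k|\leq\mathcal K$. The worst case is $k\cdot\frac{(n-1)s}{n}\geq-\frac{(n-1)\mathcal K}{n}$, contributing $\geq-\varkappa_{-1}$.
\end{itemize}
In both cases $s\leq1$ was used. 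This proves the displayed inequality.

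\emph{Positivity.} It remains to show $h-\frac{\eta_0x_+}{2h}-\varkappa_c>0$, i.e. $\eta_0x_+<2h^2-2h\varkappa_c$. The key step is to use only that $x_+$ is the root of $P(x)=a_0-a_1x-x^2/(4h^2)$. Suppose, for contradiction, that $x_*:=(2h^2-2h\varkappa_c)/\eta_0$ satisfies $x_*\leq x_+$. Then $P(x_*)\geq0$ would follow. We therefore aim to show $P(x_*)<0$, which gives $x_+<x_*$ because $P$ is decreasing on $[0,\infty)$ (note $a_1>0$).

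\emph{The main obstacle.} The remaining work is the verification that $P(x_*)<0$, i.e.
\[
a_0<a_1x_*+x_*^2/(4h^2).
\]
For the right-hand side:
\begin{itemize}
\item[$\bullet$] Note $h\varkappa_c\leq\frac{(n-1)\mathcal K}{2n}<1$.
\item[$\bullet$] Hence $x_*\geq\frac{2h^2-2}{\eta_0}$, and in the regime where $a_0>0$ we have $h^2$ large enough that $x_*>0$.
\item[$\bullet$] Then $x_*^2/(4h^2)\approx\frac{h^2}{\eta_0^2}=\frac{n}{n-1}h^2$.
\item[$\bullet$] Together with $a_1\geq\frac{(n-2)\sqrt n}{2\sqrt{n-1}}$, this gives a right-hand side of size roughly $\big(\frac{n}{n-1}+\frac{n(n-2)}{n-1}\big)h^2=nh^2$, up to lower-order terms.
\end{itemize}
For the left-hand side, $a_0\leq nh^2-nc_- -\frac{(n-1)\mathcal K^2}{4nh^2}$. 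The leading terms therefore agree, and the argument reduces to comparing $O(1)$ corrections. This is where I expect the real work. We will keep exact expressions: expand $x_*=\frac{2h^2}{\eta_0}(1-\varkappa_c/h)$ and compare the constant terms, including the $a_1$ term $\frac{\mathcal K}{2h^2}\eta_0x_*\approx\mathcal K$ and the $c_-$ term, using $\mathcal K<2$ and the explicit values of $\mathcal K_{n,\pm1}$. If the $O(1)$ comparison is tight, a fallback is to treat $x_*$ as a quadratic condition in $h^2$ and check positivity of its discriminant region directly, using $a_0>0$ to restrict $h$.

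Finally, under the hypotheses of Theorem \ref{main-gap} the pinching gives $|B|\leq x_+$ everywhere, so $\lambda_{\min}(A)>0$ at every point. Hence $\Sigma$ is strictly convex.
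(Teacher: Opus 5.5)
\textbf{There is a genuine gap: the inequality $P(x_*)<0$, which carries the whole second half, is never proved, and $x_*>0$ is asserted without a valid argument.}

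Your first half (Weyl's inequality, $\lambda_{\min}(B)\geq-\eta_0|B|$, and the sign information on $k$ from Lemma \ref{bounds}(d)) is exactly the paper's argument. So is the reduction of positivity to $P(x_*)<0$ with $x_*=2h(h-\varkappa_c)/\eta_0$. But you stop there.

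The heuristic you offer cannot close this step. You match the $h^2$ terms and then compare ``$O(1)$ corrections'' via approximations such as $x_*^2/(4h^2)\approx\frac{n}{n-1}h^2$. The proposition, however, is asserted for every $h$ in \eqref{range} with $a_0>0$, including moderate $h$. There the terms of order $1/h$ and $1/h^2$, among them $\varepsilon/h^2$, are not negligible. What is needed is an exact identity. With $u=h-\varkappa_c$ one has
\[
a_1x_*=u\Big(\frac{n(n-2)}{n-1}h+\frac{\mathcal K}{h}\Big),\qquad \frac{x_*^2}{4h^2}=\frac{n}{n-1}u^2 .
\]
After writing $nh^2=n(u+\varkappa_c)^2$, the $u^2$ terms cancel identically, since $n-\frac{n(n-2)}{n-1}-\frac{n}{n-1}=0$. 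What remains is
\[
P(x_*)=\frac{n^2\varkappa_c}{n-1}\,u+n\varkappa_c^2-\frac{\mathcal K}{h}\,u-nc_--\frac{1}{h^2}\Big(\frac{(n-1)\mathcal K^2}{4n}+\varepsilon\Big).
\]
\begin{itemize}
\item For $c=1$, $\varkappa_1=\frac{\mathcal K}{2hn}$ turns this into $-\frac{(n-2)\mathcal K}{2(n-1)h}u-\frac{(n-2)\mathcal K^2}{4nh^2}-\frac{\varepsilon}{h^2}<0$.
\item For $c=-1$, the coefficient of $u$ is $+\frac{(n-2)\mathcal K}{2h}$, so nothing can simply be dropped. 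Using $u/h=1-\varkappa_{-1}/h$ and $\mathcal K_{n,-1}=\frac{2(n-1)}{n}$, everything collapses to $-\frac{3n-2}{n}-\frac{\varepsilon}{h^2}<0$.
\end{itemize}
Only this exact cancellation gives the sign for all admissible $h$, not just asymptotically.

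The second gap is the positivity of $x_*$, i.e.\ $h>\varkappa_c$. Your contradiction argument needs it, because $P\geq0$ is only known on $[0,x_+]$. The sign of $P(x_*)$ for $c=1$ also uses it, through $u>0$. You assert it ``in the regime where $a_0>0$'', but your bound $x_*\geq(2h^2-2)/\eta_0$ gives nothing for $h\le1$. For $c=1$ the hypothesis $a_0>0$ does not force $h>1$: for $n=3$ one already has $a_0>0$ at $h=0.8$. The correct argument is as follows.
\begin{itemize}
\item For $c=-1$, $h>\varkappa_{-1}$ reads $h^2>(n-1)^2/n^2$, which is exactly \eqref{range}.
\item For $c=1$, $h\varkappa_1=\frac1{n+2}$, and $a_0>0$ gives $nh^4>\frac{(n-1)\mathcal K^2}{4n}=\frac{n(n-1)}{(n+2)^2}$. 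Hence $h^2>\frac{\sqrt{n-1}}{n+2}\geq\frac1{n+2}$.
\end{itemize}
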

\begin{proof}
By \eqref{Bdef}, $A=h\Id+\frac1{2h}B+E$ with $E=\frac{k}{2h}(T\otimes T-\frac sn\Id)$. Since $B$ is trace-free, $\lambda_{\min}(B)\geq-\eta_0|B|$. The eigenvalues of $E$ are $\frac{k}{2h}\frac{(n-1)s}{n}$ (on $T$) and $-\frac{k}{2h}\frac sn$ (on $T^\perp$). By Lemma \ref{bounds}(d), $0\leq k\leq \mathcal K$ for $c=1$ and $-\mathcal K\leq k<0$ for $c=-1$, and $s\leq1$. Hence $\lambda_{\min}(E)\geq-\varkappa_c$, which gives the first inequality.

It remains to show $x_+<X_c:=2h(h-\varkappa_c)/\eta_0$. First, $h>\varkappa_c$. For $c=-1$ this is $h^2>(n-1)^2/n^2$, which is \eqref{range}. For $c=1$ it is $h^2>1/(n+2)$; this follows from $a_0>0$, which gives $nh^4>\frac{(n-1)\mathcal K^2}{4n}=\frac{n(n-1)}{(n+2)^2}$, hence $h^2>\frac{\sqrt{n-1}}{n+2}\geq\frac1{n+2}$.

Next, $P(x)=a_0-a_1x-x^2/(4h^2)$ is strictly decreasing on $[0,\infty)$ with $P(0)=a_0>0$, and $x_+$ is its positive zero. So it suffices to prove $P(X_c)<0$. Write $u=h-\varkappa_c>0$. Using $\frac{(n-2)\sqrt n}{2\sqrt{n-1}}\cdot\frac1{\eta_0}=\frac{n(n-2)}{2(n-1)}$, we get
\[
a_1X_c=u\Big(\frac{n(n-2)}{n-1}h+\frac{\mathcal K}h\Big),\qquad \frac{X_c^2}{4h^2}=\frac{n}{n-1}u^2 .
\]
Substituting $h=u+\varkappa_c$ in $nh^2$, the terms in $u^2$ cancel, because $n-\frac{n(n-2)}{n-1}-\frac{n}{n-1}=0$. What remains is
\[
P(X_c)=\frac{n^2\varkappa_c}{n-1}\,u+n\varkappa_c^2-\frac{\mathcal K}{h}\,u-nc_--\frac{1}{h^2}\Big(\frac{(n-1)\mathcal K^2}{4n}+\varepsilon\Big).
\]
For $c=1$, $\varkappa_1=\frac{\mathcal K}{2hn}$, and this equals
\[
P(X_1)=-\frac{(n-2)\mathcal K}{2(n-1)h}\,u-\frac{(n-2)\mathcal K^2}{4nh^2}-\frac{\varepsilon}{h^2}<0 .
\]
For $c=-1$, $\varkappa_{-1}=\frac{(n-1)\mathcal K}{2hn}$ and $c_-=1$. Using $\frac{u}{h}=1-\frac{\varkappa_{-1}}{h}$ and $\mathcal K=\frac{2(n-1)}{n}$, we obtain
\[
P(X_{-1})=\frac{(n-2)\mathcal K}{2}-n-\frac{\varepsilon}{h^2}=-\frac{3n-2}{n}-\frac{\varepsilon}{h^2}<0 .
\]
\end{proof}

\begin{proof}[Proof of Corollary \ref{complete-sphere}]
Put $\delta=h-\frac{\eta_0x_+}{2h}-\varkappa_1$. The proof of Proposition \ref{convex-forced} uses only the pointwise inequality $|B|\leq x_+$, not compactness. It gives $\delta>0$ and $\ip{AX}{X}\geq\delta|X|^2$ for every tangent vector $X$. Let $X,Y$ be orthonormal tangent vectors. Since $\ip XT^2+\ip YT^2\leq|T|^2\leq1$ by Bessel's inequality, the ambient term in \eqref{gauss} with $c=1$ is nonnegative. The restriction of $\ip{A\cdot}{\cdot}$ to the plane spanned by $X,Y$ has both eigenvalues at least $\delta$, so its determinant satisfies $\ip{AX}{X}\ip{AY}{Y}-\ip{AX}{Y}^2\geq\delta^2$. Hence $K_\Sigma\geq\delta^2$ and $\operatorname{Ric}_\Sigma\geq(n-1)\delta^2$. By the Bonnet--Myers theorem, the complete manifold $\Sigma$ is compact, and Theorem \ref{main-gap} applies.
\end{proof}

\begin{remark}\label{convexity}
As noted in the Introduction, Proposition \ref{convex-forced} places $\Sigma$ in the class covered by \cite[Theorem 17]{LMS}. In $\HHH^n\times\RR$ there is also a synthetic route: $\HHH^n\times\RR$ is a Hadamard manifold, so embeddedness follows from \cite{Alexander}, and rotational symmetry from Alexandrov reflection \cite{HH}. Theorem \ref{main-gap} gives a different proof, through the adapted tensor and the integral identity \eqref{main-int}, with an explicit admissible pinching.

For large $h$ the threshold $x_+\approx2\sqrt{n/(n-1)}\,h^2$ corresponds to $|A-h\Id|\lesssim\sqrt{n/(n-1)}\,h$. This is the level at which the extremal configuration of Okumura's inequality acquires a zero principal curvature, and also the Euclidean ($c=0$) case of the bound of \cite{AdC}. The improved estimate of Section \ref{opt-section} shifts this threshold by the bounded amount $\mu_c$ (Theorem \ref{main-asymp}). For $c=1$ the shift is positive, and the improved pinching is no longer controlled by the pointwise convexity estimate of Proposition \ref{convex-forced} (Proposition \ref{nonconvex}).

For $c=1$, completeness suffices under \eqref{pinch}, since compactness follows from the convexity estimate (Corollary \ref{complete-sphere}). For $c=-1$ we obtain compactness only under the stronger hypothesis of Corollary \ref{complete}. For general complete immersions in $\HHH^n\times\RR$ under \eqref{pinch}, the boundary terms in the integration by parts require separate control; the pointwise formula \eqref{pointwise} retains derivatives of $D$, so a maximum-principle gap theorem does not follow from this proof. We do not claim such an extension. For $h$ near the lower end of \eqref{range}, the constant $a_0$ may be negative; Theorem \ref{main-sharp} covers the whole range \eqref{gap-range}.
\end{remark}

\section{A model-adapted estimate on the positivity range}\label{sharp-section}
Throughout this section $n\geq3$ and $h$ lies in \eqref{range}. In Section \ref{gap-section} the coefficient $k=k_{n,c,h}$ entered only through the uniform bounds $|k|\leq \mathcal K$ and $|sk'|\leq\sigma$. Here we use its sign structure and the geometry of the reference sphere. All expressions in the model parameter $r$ use $0\leq r\leq r_0(h)$, with $s=(nhw(r))^2$; endpoint values are taken by smooth continuation. As before, $a(s)=nhw'(r(s))>0$ and $b(s)=nhw(r(s))m(r(s))$ are given by \eqref{ab}.

\begin{lemma}\label{model-signs}
The function $k=k_{n,c,h}$ satisfies $k'(s)<0$ on $[0,1]$. Moreover, put
\begin{equation}\label{zetadef}
\zeta(s)=\frac{k(s)b(s)}{2h}-c
=\frac{m(r)}{w(r)}-nm(r)^2-c,\qquad r=r(s).
\end{equation}
For $0<r<\pi/2$ one has $\zeta<0$ if $c=1$; for every $r>0$ one has $\zeta>0$ if $c=-1$.
\end{lemma}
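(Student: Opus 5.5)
The plan is to read both assertions off the Riccati structure of Lemma \ref{bounds} and the scalar identity \eqref{kODE}. No new analysis of the profile is needed.

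\emph{Sign of $k'$.} On $(0,1]$ the inequality $k'<0$ is Lemma \ref{bounds}(f), so only the endpoint $s=0$ remains. From the proof of Lemma \ref{bounds}(d) we have $F_c-F_0=\mathfrak q\,w^2$, with $\mathfrak q$ analytic in $r^2$ and $\mathfrak q(0)<0$. Since $s=n^2h^2w^2$, this gives
\[
k(s)-F_0=\mathfrak q(r(s))\,\frac{s}{n^2h^2}.
\]
Hence $k'(0)=\mathfrak q(0)/(n^2h^2)<0$, and by analyticity through $s=0$ (Lemma \ref{regularity}) this value is the one-sided limit of $k'$.

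\emph{The formula for $\zeta$.} Substitute $k=2(1-nmw)/(nw^2)$ from \eqref{defk} and $b=nhwm$ from \eqref{model}. This gives
\[
\frac{kb}{2h}=\frac{m(1-nmw)}{w}=\frac mw-nm^2,
\]
which is \eqref{zetadef}.

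\emph{Sign of $\zeta$.} The key observation is that \eqref{kODE} rewrites $\zeta$ as
\[
\zeta=-\frac{a}{hn}\,(k+sk').
\]
Because $a=nhw'>0$, the sign of $\zeta$ is opposite to that of $k+sk'$.

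For $c=-1$, Lemma \ref{bounds}(d) gives $k<0$ and part (f) gives $k'<0$. So $k+sk'<0$, and therefore $\zeta>0$.

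For $c=1$, use the expression for $sk'$ in the proof of Lemma \ref{bounds}(e):
\[
k+sk'=\frac1{w'}\Big[Fw'+1-\frac{n+2}{2n}F-\frac{n-2}{4}w^2F^2\Big].
\]
Eliminate $w'$ using $w'=1-(n-1)mw$, $mw=(1-u)/n$ and $u=nw^2F/2$, which give $w'=\frac1n+\frac{n-1}{2}w^2F$. The bracket then collapses to
\[
1-\frac F2+\frac n4w^2F^2 .
\]
This is positive because $F<F_0=2n/(n+2)<2$ by Lemma \ref{bounds}(d). Since $w'>0$ by Lemma \ref{bounds}(c), we get $k+sk'>0$, hence $\zeta<0$. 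In particular this holds for $0<r<\pi/2$.

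The main obstacle is the collapse of the bracket. I expect the only delicate point to be bookkeeping the substitution for $w'$ correctly, after which positivity follows at once from the bound $F<2$.
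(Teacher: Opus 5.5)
Your proof is correct, but for the signs of $\zeta$ it takes a different route from the paper. The paper argues directly on the profile. It introduces the comparison function $v=m/(nm^2+c)$, checks $v'+(n-1)mv-1=-2(cm^2+1)/(nm^2+c)^2$, and integrates $\bigl(f_c^{n-1}(w-v)\bigr)'$ from $r=0$; this gives $w>v$ when $c=1$ and $w<v$ when $c=-1$. That argument does not use Lemma~\ref{bounds} and yields \eqref{dprime} at once. You instead use \eqref{kODE} to write $\zeta=-\frac{a}{hn}(ks)'$ and take the sign of $(ks)'$ from Lemma~\ref{bounds}. This is shorter, and it shows that the sign of $\zeta$ carries the same information as the monotonicity of $ks$, hence of $a$ and $b$, which the paper uses later in Lemma~\ref{equator}. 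The price is dependence on the first-contact arguments of Lemma~\ref{bounds}(d),(f).

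Your collapsed bracket is in fact exactly $-\zeta$. Substituting \eqref{Fdef} gives the identity $\zeta=-c+\frac F2-\frac n4w^2F^2$ for both values of $c$, with no differential equation involved. So for $c=1$ only $F<2$ is needed. For $c=-1$, the bounds $-\mathcal K_{n,-1}<F<0$ and $w<1/(n-1)$ give $1+\frac F2>\frac1n>\frac n4w^2F^2$, hence $\zeta>0$, without part (f).

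Your value $k'(0)=\mathfrak q(0)/(n^2h^2)$ agrees with \eqref{kprimezero}, which the paper obtains by comparing coefficients in \eqref{kriccati}.

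One small point: written in the variable $s$, your argument covers only $0<r\le r_0(h)$ for the fixed $h$. The lemma, and its later use in Lemma~\ref{sharp-defect}, concerns all $r$ in the stated ranges. To cover them, write the key identity radially as $\zeta=-\bigl(w'F_c+\tfrac12wF_c'\bigr)$. This form does not involve $h$, and Lemma~\ref{bounds}(c),(d),(f) hold on the whole domain.
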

\begin{proof}
The second expression for $\zeta$ follows from \eqref{defk} and $b=nhwm$. Multiplying \eqref{kODE} by $hn$ and substituting \eqref{ab}, we obtain
\begin{equation}\label{kriccati}
a(s)\,s\,k'(s)=hnc-\frac{h(n+2)}2\, k(s)-\frac{n-2}{4hn}\,s\, k(s)^2 .
\end{equation}
(Equivalently, \eqref{kriccati} is the formula for $sk'$ in the proof of Lemma \ref{bounds}(e), multiplied by $a=nhw'$.) The value $k(0)=F_0=2nc/(n+2)$ cancels the constant terms. Since $a(s)=h+O(s)$ by \eqref{ab}, comparing the coefficients of $s$ in \eqref{kriccati} gives $hk'(0)=-\frac{h(n+2)}{2}k'(0)-\frac{(n-2)F_0^2}{4hn}$, that is,
\begin{equation}\label{kprimezero}
k'(0)=-\frac{2n(n-2)}{h^2(n+2)^2(n+4)}<0.
\end{equation}
Together with Lemma \ref{bounds}(f), which gives $k'<0$ on $(0,1]$, this proves $k'<0$ on $[0,1]$.

For the sign of $\zeta$, set $v=m/(nm^2+c)$. The denominator is positive on the ranges in the statement: trivially for $c=1$, and because $m=\coth r>1$ for $c=-1$. Using $m'=-c-m^2$ one checks that
\begin{equation}\label{comparison-v}
v'+(n-1)mv-1=-\frac{2(cm^2+1)}{(nm^2+c)^2}.
\end{equation}
Put $\phi=w-v$. By \eqref{odew}, $\phi'+(n-1)m\phi=2(cm^2+1)/(nm^2+c)^2$, that is, $(f_c^{n-1}\phi)'=2f_c^{n-1}(cm^2+1)/(nm^2+c)^2$. Both $w$ and $v$ are $O(r)$ as $r\downarrow0$, so $f_c^{n-1}\phi\to0$. For $c=1$ and $0<r<\pi/2$ the right-hand side is positive; hence $\phi>0$, that is, $w>v$, and since $m>0$ this gives $m/w<nm^2+1$, i.e., $\zeta<0$. For $c=-1$ we have $cm^2+1=1-m^2<0$; hence $w<v$, which gives $m/w>nm^2-1$, i.e., $\zeta>0$.

In particular, put $d_+=1-nmw$ when $c=1$ and $d_-=nmw-1$ when $c=-1$. Using \eqref{odew},
\begin{equation}\label{dprime}
\begin{gathered}
d_+'=-nw\,\zeta>0\quad(c=1,\ 0<r<\pi/2),\\
d_-'=nw\,\zeta>0\quad(c=-1).
\end{gathered}
\end{equation}
\end{proof}

\begin{lemma}\label{sharp-defect}
Set $s=|T|^2$, $k=k(s)$ and
\begin{equation}\label{alphaeta}
\hat\alpha=\frac{(n-2)k+2(n-1)sk'}n,\qquad
\hat\eta=\frac{-(n-2)k+2sk'}n.
\end{equation}
Define the nonnegative continuous function
\begin{equation}\label{Lambda}
\Lambda(s)=\max\left\{k^2,\ \hat\alpha^2,\
\frac12\left(\hat\alpha^2+\hat\eta^2+
\frac{4(n-2)}{n^2}(k+sk')^2\right)\right\}.
\end{equation}
Then, pointwise on every CMC hypersurface with the tensor $B$ of \eqref{Bdef},
\begin{equation}\label{defect-bound}
\frac12|D|^2+|\diver B|^2
\leq\frac{s(1-s)}{4h^2}\,\Lambda(s)\,|B|^2.
\end{equation}
Moreover, $\Lambda$ has a radial description independent of $h$. For $r>0$ put $\Gamma(r)=wF_c'/(2w')$, and let $\hat\Lambda(r)$ be defined by \eqref{alphaeta}--\eqref{Lambda} with $(k,sk')$ replaced by $(F_c(r),\Gamma(r))$. Then $\Lambda(s)=\hat\Lambda(r(s))$ for $s\in(0,1]$. Let $r_{\HHH}$ be defined by $nw_{-1}(r_{\HHH})=1$. Then
\begin{equation}\label{lambda-equals-k}
\hat\Lambda(r)=F_c(r)^2\quad\text{for } 0<r\leq\pi/2\ (c=1),\ \ 0<r\leq r_{\HHH}\ (c=-1).
\end{equation}
Hence $\Lambda=k^2$ on $[0,1]$ whenever $r_0(h)\leq\pi/2$ $(c=1)$, respectively $r_0(h)\leq r_{\HHH}$ $(c=-1)$. These assertions concern only the model coefficient; they do not require $B$ to be the model tensor.
\end{lemma}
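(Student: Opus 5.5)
The plan is to prove \eqref{defect-bound} by an exact block computation in an adapted frame, and then to handle the radial statements with the Riccati equation \eqref{riccati} and the sign information of Lemma \ref{model-signs}.

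\textbf{Block computation.} Where $T\neq0$, put $t=\sqrt s$, $e_1=T/t$, and complete to an orthonormal frame $e_2,\dots,e_n$ of $T^\perp$. Write $B$ in blocks: $\beta=B_{11}$, $v_i=B_{1i}$, and $C=(B_{ij})_{i,j\geq2}$ with $\tr C=-\beta$. Split $C=C_0-\frac{\beta}{n-1}\Id$ with $C_0$ trace-free. Then
\[
|B|^2=\tfrac{n}{n-1}\beta^2+2|v|^2+|C_0|^2 .
\]
Substituting into \eqref{linearD} and \eqref{divB}, every term carries the factor $\nu t/(2h)$, whose square is $s(1-s)/(4h^2)$. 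Put $\gamma=2(k+sk')/n$. A direct substitution should give
\begin{gather*}
D(e_1,e_i)=\tfrac{\nu t}{2h}\big[\hat\alpha v_ie_1+kCe_i+\gamma\beta e_i\big],\qquad
D(e_i,e_j)=\tfrac{\nu t}{2h}\,\gamma\,(v_ie_j-v_je_i),\\
\diver B=\tfrac{\nu t}{2h}\big[-\hat\alpha\beta e_1+\hat\eta\, v\big],
\end{gather*}
for $i,j\geq2$. The coefficient of $v_ie_1$ collects $k-\gamma+2sk'$, and this is exactly $\hat\alpha$.

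\textbf{Grouping the quadratic form.} In $\frac12|D|^2+|\diver B|^2$ the terms in $v$ and in $(\beta,C)$ decouple. The $v$-part is
\[
\big(\hat\alpha^2+\hat\eta^2+(n-2)\gamma^2\big)|v|^2 ,
\]
which equals $2|v|^2$ times the third entry of \eqref{Lambda}. For the $(\beta,C)$-part, $kC+\gamma\beta\Id=kC_0+\big(\gamma-\frac{k}{n-1}\big)\beta\Id$. The identity $(n-1)\gamma-k=\hat\alpha$ then turns it into
\[
k^2|C_0|^2+\frac{\hat\alpha^2}{n-1}\beta^2+\hat\alpha^2\beta^2=k^2|C_0|^2+\frac{n}{n-1}\hat\alpha^2\beta^2 .
\]
Comparing with $|B|^2$ term by term gives \eqref{defect-bound} with $\Lambda=\max\{k^2,\hat\alpha^2,\cdot\}$. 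Points with $T=0$ follow by continuity, since both sides vanish there.

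\textbf{Radial description.} Since $k=F_c\circ r$ and $s=(nhw)^2$, we have $ds/dr=2sw'/w$, hence $sk'(s)=wF_c'/(2w')=\Gamma(r)$. Therefore $\Lambda(s)=\hat\Lambda(r(s))$, and $h$ enters only through $r(s)$.

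\textbf{Proving \eqref{lambda-equals-k}, the main obstacle.} I must show that $\hat\alpha^2\leq F^2$ and that the third entry of \eqref{Lambda} is at most $F^2$ on the stated $r$-ranges. Both reduce to inequalities between $F$ and $\Gamma$. By Lemma \ref{bounds}(f), $\Gamma<0$. For $c=1$, Lemma \ref{bounds}(b) gives $F>0$, so $\hat\alpha<(n-2)F/n<F$; the remaining inequality $\hat\alpha\geq-F$ is $|\Gamma|\leq F$. For $c=-1$, $F<0$ and $\Gamma<0$ already give $\hat\alpha<0$, so again only a comparison $|\Gamma|\lesssim|F|$ is needed.

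For the third entry, the expanded condition is a quadratic inequality in $\Gamma/F$. I would express $\Gamma$ through \eqref{riccati}:
\[
\Gamma=\frac{1}{w'}\Big[c-\frac{n+2}{2n}F-\frac{n-2}{4}w^2F^2\Big],
\]
and rewrite $F=2d/(nw^2)$ with $d=d_+$ or $-d_-$. The inequality then becomes an inequality among $m$, $w$ and $w'=1-(n-1)mw$.

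The constraint $r\leq\pi/2$ (respectively $nw\leq1$) should enter through $m\geq0$ and through the sign of $\zeta$ in Lemma \ref{model-signs}, via the monotonicity \eqref{dprime}. I would first try to verify the two inequalities at $r=0$ using the expansions of Lemma \ref{regularity}. After that I would propagate them with a first-contact argument like the proof of Lemma \ref{bounds}(d), and I expect this propagation to be the hardest part. Finally, $\Lambda=k^2$ on $[0,1]$ follows because $r(s)\leq r_0(h)$.
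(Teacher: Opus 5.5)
Your derivation of \eqref{defect-bound} is correct and is the paper's block computation: the formulas for $D(e_1,e_i)$, $D(e_i,e_j)$ and $\diver B$, the identity $(n-1)\gamma-k=\hat\alpha$, the orthogonal splitting of $|B|^2$, and $sk'(s)=\Gamma(r(s))$ all match. The gap is \eqref{lambda-equals-k}, which you leave as a plan.

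The missing reduction goes as follows. Put $\chi=\Gamma/F_c$. The entries of $\hat\Lambda/F_c^2$ are $1$, $\hat A(\chi)^2$ and $\tfrac12\bigl(\hat A(\chi)^2+\hat E(\chi)^2+4(n-2)(1+\chi)^2/n^2\bigr)$, where $\hat A=(n-2+2(n-1)\chi)/n$ and $\hat E=(2\chi-(n-2))/n$. Both nontrivial entries are convex quadratics in $\chi$. Evaluating at the endpoints shows they are at most $1$ on $[-1,0]$ and on $[0,1/n]$; in fact on $[0,1/(n-1)]$, at whose right end both equal $1$. So it only remains to locate $\chi$. Differentiating $F_c=\pm2d_\pm/(nw^2)$ gives $\chi=\frac{w\,d_\pm'}{2w'd_\pm}-1$. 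For $c=1$ this ends the proof at once: $d_+'>0$ on $(0,\pi/2)$ by \eqref{dprime} gives $\chi>-1$, and $\Gamma<0<F_1$ gives $\chi<0$. No evaluation at $r=0$ and no first-contact propagation is needed.

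For $c=-1$ the lower bound $\chi>0$ is free, since $\Gamma$ and $F_{-1}$ are both negative. The whole content is an upper bound on $\chi$, and the mechanism you propose cannot supply it. The facts $m=\coth r>0$, $\zeta>0$ and $d_-'>0$ hold for every $r>0$, whether or not $nw\le1$, and they only bound $\chi$ from below. The upper bound is also delicate. Using $d_-=(1-nw')/(n-1)$, one checks that $\chi\to1/(n-1)$ as $r\to\infty$, exactly where the entries reach $1$. So the restriction $r\le r_{\HHH}$ must be used quantitatively.

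The paper does this with two first-contact arguments on quantities you have not identified. First, $\mathcal U=w^2-d_-/n$ is positive near $0$, and $\mathcal U'=w[2/n-(n-2)w^2]>0$ at any zero because $w\le1/n$; this yields $0<d_-<nw^2\le1/n$. Second, $\mathcal J=2(n+1)w'd_--nwd_-'=2nw'd_-(1/n-\chi)$ is positive near $0$, and an explicit computation shows $\mathcal J'>0$ at any zero, where the sign uses $d_-<1/n$. Without this argument or an equivalent one, the hyperbolic half of \eqref{lambda-equals-k} is unproved, and so is the conclusion $\Lambda=k^2$ for $c=-1$.
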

\begin{proof}
At a point with $s>0$ choose $e_1=T/\sqrt s$ and complete it to an orthonormal frame $e_1,e_2,\dots,e_n$; indices $a,b$ range over $2,\dots,n$. Write
\[
p=B_{11},\qquad v_a=B_{1a},\qquad
B_{ab}=R^0_{ab}-\frac{p}{n-1}\delta_{ab},\qquad
\sum_{a}R^0_{aa}=0.
\]
Substitution in \eqref{linearD}, without taking norms first, gives
\begin{align*}
D(e_1,e_a)&=\frac{\nu\sqrt s}{2h}\left\{
\hat\alpha\, v_a e_1+k\sum_{b}B_{ab}e_b
+\frac{2(k+sk')}{n}\,p\, e_a\right\},\\
D(e_a,e_b)&=\frac{\nu\sqrt s}{h}\,\frac{k+sk'}n\,
(v_a e_b-v_b e_a).
\end{align*}
Taking the contraction $(\diver B)_j=\sum_i\langle D(e_i,e_j),e_i\rangle$, or directly from \eqref{divB}, yields
\[
(\diver B)_1=-\frac{\nu\sqrt s}{2h}\,\hat\alpha\, p,
\qquad (\diver B)_a=\frac{\nu\sqrt s}{2h}\,\hat\eta\, v_a.
\]
In the first line of the formula for $D$, the transverse block $kB_{ab}+\frac{2(k+sk')}{n}p\,\delta_{ab}$ has trace-free part $kR^0$ and scalar part $\hat\alpha p/(n-1)$. Summing over ordered pairs $(i,j)$, including both orders, and using $\nu^2=1-s$, we get
\begin{align*}
|D|^2&=\frac{s(1-s)}{2h^2}\Big\{
k^2|R^0|^2+\frac{\hat\alpha^2}{n-1}p^2+\Big[\hat\alpha^2+\frac{4(n-2)}{n^2}(k+sk')^2\Big]|v|^2
\Big\},\\
|\diver B|^2&=\frac{s(1-s)}{4h^2}
\left\{\hat\alpha^2p^2+\hat\eta^2|v|^2\right\}.
\end{align*}
Since $|B|^2=|R^0|^2+\frac{n}{n-1}p^2+2|v|^2$, comparison of these three orthogonal blocks proves \eqref{defect-bound}. Smoothness covers $s=0$, and the factor $1-s=\nu^2$ covers $s=1$.

It remains to prove the radial assertions. Since $k=F_c\circ r$, $s=n^2h^2w^2$ and $ds/dr=2n^2h^2ww'>0$, we have $sk'(s)=\Gamma(r(s))$, hence $\Lambda(s)=\hat\Lambda(r(s))$; the value at $s=0$ follows by continuity. {By Lemma \ref{bounds}(f), $F_c'<0$, that is, $\Gamma<0$, on the whole domain.} After division by $F_c^2$, the three entries in the definition of $\hat\Lambda$ are
\begin{equation}\label{three-entries}
1,\qquad \hat A(\chi)^2,\qquad
\frac{\hat A(\chi)^2+\hat E(\chi)^2+4(n-2)(1+\chi)^2/n^2}{2},
\end{equation}
where $\chi=\Gamma/F_c$, $\hat A(\chi)=[n-2+2(n-1)\chi]/n$ and $\hat E(\chi)=[-(n-2)+2\chi]/n$. The last two expressions are convex quadratic functions of $\chi$. On $[-1,0]$ and on $[0,1/n]$ their endpoint values do not exceed $1$: at $\chi=-1$ both equal $1$; at $\chi=0$ they equal $((n-2)/n)^2$ and $(n-2)/n$; at $\chi=1/n$ they equal $(1-2/n^2)^2<1$ and $1-2(n+1)/n^3<1$. Hence it is enough to show that $\chi$ lies in one of these intervals.

\emph{Spherical case.} By Lemma \ref{bounds}(b), $d_+=1-nmw=\frac n2w^2F_1>0$ on $(0,\pi)$, so $F_1=2d_+/(nw^2)>0$. By \eqref{dprime}, $d_+'>0$ on $(0,\pi/2)$. Differentiating $F_1=2d_+/(nw^2)$ gives
\[
\chi=\frac{\Gamma}{F_1}=\frac{w\,d_+'}{2w'd_+}-1 .
\]
The first term is positive, and $\chi<0$ because $\Gamma<0$ and $F_1>0$. Thus $-1<\chi<0$ on $(0,\pi/2]$, the endpoint by continuity of $d_+'$ and $\Gamma$ and by $d_+'(\pi/2)=-nw\zeta>0$, which holds since $\zeta=-nm^2-1+m/w=-1<0$ at $m=\cot(\pi/2)=0$.

\emph{Hyperbolic case.} By Lemma \ref{bounds}(d), $F_{-1}<F_0<0$, so $d_-=nmw-1=-\frac n2w^2F_{-1}>0$ and $F_{-1}=-2d_-/(nw^2)<0$; by \eqref{dprime}, $d_-'>0$. Thus $\chi=\Gamma/F_{-1}>0$, and as above $\chi=wd_-'/(2w'd_-)-1$. We show that $\chi<1/n$ on $0<r\leq r_{\HHH}$, where $w\leq1/n$. In this case \eqref{odew} gives
\begin{equation}\label{wd-ode}
w'=\frac{1-(n-1)d_-}{n},\qquad d_-'=n(w-md_-).
\end{equation}
First set $\mathcal U=w^2-d_-/n$. The expansions in the proof of Lemma \ref{regularity} give
\begin{equation}\label{Uexp}
\mathcal U(r)=\frac{2r^2}{n^2(n+2)}+O(r^4)>0
\end{equation}
near zero. At a positive zero of $\mathcal U$, \eqref{wd-ode} gives $\mathcal U'=w[2/n-(n-2)w^2]>0$, because $w\leq1/n$. A first zero approached from positive values could not have positive derivative. Consequently
\begin{equation}\label{dsmall}
0<d_-<nw^2\leq1/n .
\end{equation}
Now define $\mathcal J=2(n+1)w'd_--nwd_-'$. With
$\mathfrak a_n=(n^2+2n+2)/n$ and $\mathfrak b_n=(n^2-2)/n$, direct substitution of \eqref{wd-ode} gives
\begin{equation}\label{Ipoly}
\mathcal J=\mathfrak a_nd_--\mathfrak b_nd_-^2-n^2w^2
=\frac{2r^2}{n(n+2)}+O(r^4)>0
\end{equation}
near zero. For clarity, write $d=d_-$, $\mathfrak a=\mathfrak a_n$, and $\mathfrak b=\mathfrak b_n$. Since $d=nmw-1$, equation \eqref{wd-ode} gives
\[
w'=\frac{1-(n-1)d}{n},\qquad
d'=n(w-md)=nw-\frac{d(1+d)}w.
\]
Differentiating \eqref{Ipoly}, we obtain
\[
\mathcal J'=(\mathfrak a-2\mathfrak bd)\left(nw-\frac{d(1+d)}w\right)
-2nw[1-(n-1)d].
\]
At a positive zero of $\mathcal J$ we have $n^2w^2=\mathfrak a d-\mathfrak b d^2$. Multiplying by $w$ and substituting give
\[
\begin{aligned}
w\mathcal J'
&=\frac d n\Big\{(\mathfrak a-2\mathfrak bd)[\mathfrak a-\mathfrak bd-n(1+d)]-2(\mathfrak a-\mathfrak bd)[1-(n-1)d]\Big\}\\
&=\frac{2d}{n^2}[1-(n-1)d][\mathfrak a-(n+2)\mathfrak bd].
\end{aligned}
\]
The last equality follows by substituting the values of $\mathfrak a_n$ and $\mathfrak b_n$ and collecting powers of $d$. Dividing by $w>0$ gives
\begin{equation}\label{Iat-zero}
\mathcal J'=\frac{2d_-[1-(n-1)d_-]}{n^2w}
\bigl[\mathfrak a_n-(n+2)\mathfrak b_nd_-\bigr]>0.
\end{equation}
Indeed, $1-(n-1)d_-=nw'>0$, while \eqref{dsmall} implies
$\mathfrak a_n-(n+2)\mathfrak b_nd_->\mathfrak a_n-(n+2)\mathfrak b_n/n=4(n+1)/n^2>0$.
As for $U$, $\mathcal J$ cannot have a first positive zero. Thus $\mathcal J>0$, which is equivalent to
\[
0<\chi=\frac{w\,d_-'}{2w'd_-}-1<\frac1n .
\]
By \eqref{three-entries}, this finishes the hyperbolic case.
\end{proof}

\begin{lemma}\label{gauss-polynomial}
Put $u=|B|$, $s=|T|^2$, $b=b(s)$ and $\zeta=\zeta(s)$. At a point where $B$ has eigenvalues $\beta_i$,
\begin{equation}\label{exact-gauss}
\begin{aligned}
\sum_{i<j}K_{ij}(\beta_i-\beta_j)^2={}&
n(c+b^2)u^2+\zeta\bigl(su^2+n|BT|^2\bigr)\\
&+\frac n2\tr(B^3)-\frac{k}{2h^2}u^2\ip{BT}{T}-\frac{u^4}{4h^2}.
\end{aligned}
\end{equation}
Let
\begin{equation}\label{qPdef}
\begin{aligned}
q_{n,1,h}(s)&=n\bigl[1-s+a(s)b(s)\bigr]
-\frac{s(1-s)}{4h^2}\Lambda(s),\\
q_{n,-1,h}(s)&=\bigl[-(1-s)+a(s)b(s)\bigr]
+(n-1)\bigl[-1+b(s)^2\bigr]-\frac{s(1-s)}{4h^2}\Lambda(s),\\
\mathcal P_{n,c,h}(s)&=\frac{n(n-2)}{2\sqrt{n(n-1)}}
+\frac{|k(s)|\,s}{2h^2}\sqrt{\frac{n-1}{n}}.
\end{aligned}
\end{equation}
On the ranges \eqref{gap-range}, the integrand of \eqref{main-int} is bounded below by
\begin{equation}\label{polynomial-lower}
|\nabla B|^2+u^2\left[q_{n,c,h}(s)-\mathcal P_{n,c,h}(s)\,u
-\frac{u^2}{4h^2}\right].
\end{equation}
\end{lemma}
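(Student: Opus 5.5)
The plan has three steps: first convert the exact curvature identity \eqref{curvexact} of Lemma \ref{curvterm} into \eqref{exact-gauss} by an algebraic regrouping; then bound the $|BT|^2$ term using the sign of $\zeta$ from Lemma \ref{model-signs}; and finally subtract the Codazzi-defect terms of \eqref{main-int} using Lemma \ref{sharp-defect}.

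\emph{Step 1: the identity \eqref{exact-gauss}.} The terms $\frac n2\sum\beta_i^3$, $-\frac{k}{2h^2}u^2\ip{BT}{T}$ and $-\frac{u^4}{4h^2}$ already appear in \eqref{curvexact}. It therefore suffices to match the coefficients of $u^2$ and of $|BT|^2$. With $b=h-\frac{ks}{2hn}$ from \eqref{ab} and $\zeta=\frac{kb}{2h}-c$ from \eqref{zetadef}, expand as follows.
\begin{itemize}
\item $nb^2=nh^2-ks+\frac{k^2s^2}{4h^2n}$.
\item $\zeta s=\frac{ks}{2}-\frac{k^2s^2}{4h^2n}-cs$.
\end{itemize}
Adding $nc$, the coefficient of $u^2$ becomes $nh^2+c(n-s)-\frac{ks}2$, as in \eqref{curvexact}. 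Similarly $n\zeta=n(\frac k2-c)-\frac{k^2s}{4h^2}$, which reproduces both $|BT|^2$ terms of \eqref{curvexact}, including $-\frac{k^2s}{4h^2}|BT|^2$. This step is purely mechanical.

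\emph{Step 2: pointwise lower bound for \eqref{exact-gauss}.} Apply Okumura's inequality, $\frac n2\tr B^3\geq-\frac{n(n-2)}{2\sqrt{n(n-1)}}u^3$. Use $|\ip{BT}{T}|\leq s\sqrt{(n-1)/n}\,u$, which gives $-\frac{k}{2h^2}u^2\ip{BT}{T}\geq-\frac{|k|s}{2h^2}\sqrt{\frac{n-1}n}\,u^3$. These two cubic bounds together make up exactly $-\mathcal P_{n,c,h}(s)u^3$.

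The term $\zeta(su^2+n|BT|^2)$ is linear in $|BT|^2\in[0,\frac{n-1}{n}su^2]$. Its minimum is $\min\{\zeta s,n\zeta s\}u^2$, and which endpoint is attained depends only on the sign of $\zeta$. This is where the range \eqref{gap-range} enters.
\begin{itemize}
\item \emph{Case $c=1$.} Since $h>h_{\SSS}(n)$, we have $nhw(\pi/2)>1$, so $r_0(h)<\pi/2$. Lemma \ref{model-signs} then gives $\zeta<0$ on $(0,1]$, so the minimum is $n\zeta s$. Using $ks=2h(a-b)$, i.e.\ $\frac{kbs}{2h}=b(a-b)$, the $u^2$ coefficient becomes
\[
n(1+b^2)+n\zeta s=n\bigl[1-s+ab\bigr].
\]
\item \emph{Case $c=-1$.} Here $\zeta>0$, so the minimum is at $|BT|=0$. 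The $u^2$ coefficient becomes
\[
n(-1+b^2)+\zeta s=-n+nb^2+b(a-b)+s=\bigl[-(1-s)+ab\bigr]+(n-1)\bigl[-1+b^2\bigr].
\]
\end{itemize}
At $s=0$ the $\zeta$-terms vanish, so continuity covers the poles. The main (though modest) obstacle is this sign bookkeeping: the reduction to the endpoint $nh\,w(\pi/2)>1$, and checking that the factor $b(a-b)$ indeed produces the $ab$ terms of \eqref{qPdef}.

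\emph{Step 3: the defect terms.} The integrand of \eqref{main-int} contains $-\frac12|D|^2-|\diver B|^2$. By \eqref{defect-bound} this is at least $-\frac{s(1-s)}{4h^2}\Lambda(s)u^2$. Adding this to the bound of Step 2, and keeping $|\nabla B|^2$, yields \eqref{polynomial-lower} with $q_{n,c,h}$ exactly as defined in \eqref{qPdef}.
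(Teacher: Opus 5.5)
Your proposal is correct and follows the paper's proof essentially step by step. You regroup \eqref{curvexact} into \eqref{exact-gauss}, choose the endpoint for $|BT|^2$ by the sign of $\zeta$ from Lemma \ref{model-signs} (via $r_0<\pi/2$ in the spherical case), bound the cubic terms by Okumura and the $\ip{BT}{T}$ estimate to get $-\mathcal P_{n,c,h}u^3$, and subtract the defect using \eqref{defect-bound}. The only difference is cosmetic: the paper packages the $u^2$ coefficients as the model sectional curvatures $K_{\rm mer}=c(1-s)+ab$ and $K_{\rm orb}=c+b^2$, with $K_{\rm mer}-K_{\rm orb}=s\zeta$, whereas you verify them by substituting $ks=2h(a-b)$ directly.
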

\begin{proof}
Since $b=h-\frac{ks}{2hn}$ and $\zeta=\frac{kb}{2h}-c$, a direct computation gives
\[
n(c+b^2)+s\zeta=nh^2+c(n-s)-\frac{ks}{2},\qquad
n\zeta=n\Big(\frac k2-c\Big)-\frac{k^2s}{4h^2}.
\]
Hence \eqref{exact-gauss} is \eqref{curvexact} with its terms regrouped.

We use the bounds recalled in the proof of Lemma \ref{curvterm}:
\begin{equation}\label{okumura}
\begin{gathered}
\tr(B^3)\geq-\frac{n-2}{\sqrt{n(n-1)}}u^3,\\
|BT|^2\leq\frac{n-1}{n}su^2,\qquad
|\ip{BT}{T}|\leq\sqrt{\frac{n-1}{n}}\,su.
\end{gathered}
\end{equation}
By Lemma \ref{model-signs}, $\zeta<0$ in the spherical range \eqref{gap-range}, because $h>h_{\SSS}(n)$ implies $r_0<\pi/2$ (see the proof of Lemma \ref{positive-q}), and $\zeta>0$ in the hyperbolic range. If $\zeta<0$, \eqref{okumura} bounds $\zeta(su^2+n|BT|^2)$ below by $n\zeta su^2$; if $\zeta>0$, it is bounded below by $\zeta su^2$. The model sectional curvatures of the meridian planes and of the orbit planes are
\[
K_{\rm mer}=c(1-s)+ab,\qquad K_{\rm orb}=c+b^2,\qquad
K_{\rm mer}-K_{\rm orb}=s\zeta,
\]
where the last identity follows from \eqref{ab} and \eqref{zetadef}. Thus the quadratic terms of \eqref{exact-gauss} are bounded below by $nK_{\rm mer}u^2$ if $c=1$ and by $[K_{\rm mer}+(n-1)K_{\rm orb}]u^2$ if $c=-1$. The remaining terms are estimated by \eqref{okumura}, and the defect terms of \eqref{main-int} by \eqref{defect-bound}. This gives \eqref{polynomial-lower} with precisely \eqref{qPdef}.
\end{proof}

\begin{lemma}\label{positive-q}
Under \eqref{gap-range}, $q_{n,c,h}(s)>0$ for $0\leq s\leq1$. In the hyperbolic case, in fact,
\begin{equation}\label{hyper-qmin}
\min_{[0,1]}q_{n,-1,h}=n(h^2-1).
\end{equation}
At $h=1$ the hyperbolic coefficient vanishes at the poles; at $h=h_{\SSS}(n)$ the spherical coefficient vanishes at the equator. Below either threshold it is negative at the corresponding point.
\end{lemma}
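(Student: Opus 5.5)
The plan is to pass to the radial variable $r\in[0,r_0(h)]$, where $s=(nhw(r))^2$, $a=nhw'$ and $b=nhwm$. Then $a-b=ks/(2h)$, and the spherical and hyperbolic cases separate cleanly. The first step is to locate $r_0$. Since $w$ is strictly increasing (Lemma \ref{bounds}(c)), $nhw(r_0)=1$ gives $r_0<\pi/2$ exactly when $h>1/(nw_1(\pi/2))=h_{\SSS}(n)$. In the hyperbolic case, $h>1$ gives $w(r_0)=1/(nh)<1/n$, that is, $r_0<r_{\HHH}$. In both cases Lemma \ref{sharp-defect} then gives $\Lambda=k^2$ on $[0,1]$. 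Using $k^2s=4h^2(a-b)^2/s$, the defect term becomes
\[
\frac{s(1-s)}{4h^2}\Lambda=\frac{(1-s)(a-b)^2}{s}.
\]

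\emph{Hyperbolic case.} From $a+(n-1)b=nh$ we get $ab+(n-1)b^2=nhb$, and \eqref{ab} gives $nhb=nh^2-ks/2$. Hence
\[
q_{n,-1,h}(s)=n(h^2-1)+s\Big(1-\frac k2-\frac{(1-s)k^2}{4h^2}\Big).
\]
By Lemma \ref{bounds}(d), $-2<k<0$, so $1-k/2=1+|k|/2$. Since $h>1$, we also have $(1-s)k^2/(4h^2)<k^2/4<|k|/2<1+|k|/2$. The bracket is therefore positive, and the minimum over $[0,1]$ is attained at $s=0$, which gives \eqref{hyper-qmin}. The same formula at $s=0$, a pole, shows that $q$ vanishes there when $h=1$ and is negative there when $h<1$.

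\emph{Spherical case.} Here $q=n(1-s)+nab-(1-s)(a-b)^2/s$. In the $r$-variable, put $H=n^2h^2$. Then $(a-b)^2/s=(w'/w-m)^2$, which does not depend on $h$, and
\[
q=\Big[n-\Big(\frac{w'}{w}-m\Big)^2\Big]+H\Big[nww'm-nw^2+w^2\Big(\frac{w'}{w}-m\Big)^2\Big].
\]
For fixed $r$, this expression is affine in $H$. The admissible $H$ form the interval $\big(1/w(\pi/2)^2,\,1/w(r)^2\big]$, where the left end corresponds to $h=h_{\SSS}$ and the right end to $r=r_0$. So positivity reduces to checking the two endpoints. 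At the right end $s=1$ and $q=na(1)b(1)$. This is positive, since $a=nhw'>0$ and $b=nhwm>0$ for $r<\pi/2$.

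The remaining claim is that at $h=h_{\SSS}$ one has $q\geq0$ for $0\le r\le\pi/2$, with equality only at $r=\pi/2$. There $m=0$, $s=1$, and $q=nab=0$. This endpoint case is also what shows sharpness: for $h<h_{\SSS}$ the reference sphere passes $r=\pi/2$ with $s<1$, so $b=0$ and $q=n(1-s)-(1-s)a^2/s$ at the equator; comparing with the $h=h_{\SSS}$ value (where $a^2/s$ is continuous in $h$) gives a negative value.

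\emph{Main obstacle.} The hard step is the inequality at $h=h_{\SSS}$ for $r<\pi/2$. I would first rewrite the expression using $\zeta<0$ from Lemma \ref{model-signs} and $K_{\rm mer}-K_{\rm orb}=s\zeta$. Then I would try a first-zero ODE argument in $r$, in the style of $\mathcal U$ and $\mathcal J$ in Lemma \ref{sharp-defect}, using \eqref{odew} and \eqref{dprime}. If the affine structure in $H$ does not settle the left endpoint directly, a fallback is to show that the coefficient of $H$ is nonnegative, which reduces the check to the constant term at the smallest admissible $H$.
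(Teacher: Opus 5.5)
\textbf{Hyperbolic case.} This part is correct, and shorter than the paper's argument. The identity $ab+(n-1)b^2=nhb=nh^2-ks/2$ gives $q_{n,-1,h}=n(h^2-1)+s\bigl(1-\frac k2-\frac{(1-s)k^2}{4h^2}\bigr)$, and $-2<k<0$ together with $h>1$ makes the bracket positive. The paper instead compares $q_{n,-1,h}$ with $q_{n,-1,1}$ at fixed $r$, using \eqref{dsmall}.

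\textbf{Spherical positivity: a genuine gap.} The step you call the main obstacle is the entire content of the claim, and it is not proved. The affine reduction in $H$ does not remove the difficulty. At $h=h_{\SSS}(n)$ your expression is still $nab+(1-s)\bigl(n-(w'/w-m)^2\bigr)$, exactly as for general $h$, so what is needed is a bound on $|w'/w-m|$. Your fallback also fails: at $r=\pi/2$ the coefficient of $H$ equals $1-nw(\pi/2)^2<0$ (for $n=3$ it is $1-3\pi^2/16$).

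The missing idea is the following. By \eqref{odew}, $w'/w-m=d_+/w$ with $d_+=1-nmw$, so
$q_{n,1,h}=nab+(1-s)(n-d_+^2/w^2)$, and one needs $d_+<\sqrt n\,w$ on $(0,\pi/2)$. The paper obtains this from the comparison function $v=(nm+\sqrt n)^{-1}$:
\begin{itemize}
\item it satisfies $1-[v'+(n-1)mv]=(n+1)\sqrt n\,m/(nm+\sqrt n)^2>0$;
\item hence $(\sin^{n-1}r\,(w-v))'>0$, with limit $0$ at $r=0$;
\item so $w>v$, which is equivalent to $d_+<\sqrt n\,w$.
\end{itemize}
Then $q_{n,1,h}>0$ for every $h>h_{\SSS}(n)$ directly, with no endpoint argument in $h$.

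\textbf{Spherical sharpness: the argument is wrong.} For $h<h_{\SSS}(n)$ you evaluate at $r=\pi/2$, where $s<1$. There your formula gives $q=(1-s)\bigl(n-1/w(\pi/2)^2\bigr)$, and this is \emph{positive}, not negative. Indeed, Wallis' relation $\int_0^{\pi/2}\sin^{n-1}r\,dr\cdot\int_0^{\pi/2}\sin^{n}r\,dr=\pi/(2n)$, together with $\int_0^{\pi/2}\sin^n r\,dr<w(\pi/2)$, gives $nw(\pi/2)^2>\pi/2>1$. The non-strict inequality also follows from $d_+<\sqrt n\,w$ by letting $r\to\pi/2$, where $d_+=1$.

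The equator in the statement is the equator of the rotational sphere: $s=1$, i.e.\ $r=r_0(h)$, which lies beyond $\pi/2$ when $h<h_{\SSS}(n)$. There the defect term vanishes regardless of $\Lambda$, and $q_{n,1,h}(1)=na(1)b(1)$ with $b(1)=nhw(r_0)\cot r_0<0$. At $h=h_{\SSS}(n)$ the same formula gives $0$.
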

\begin{proof}
First suppose $c=1$. Since $w(\pi/2)=\int_0^{\pi/2}\sin^{n-1}r\,dr$ and $w$ is strictly increasing, $h>h_{\SSS}(n)$ is equivalent to $nhw(\pi/2)>1$, that is, to $r_0(h)<\pi/2$. Hence $a=nhw'>0$ and $b=nhwm>0$ throughout the profile. Let $d_+=1-nmw>0$ as in the proof of Lemma \ref{sharp-defect}. The function $v=(nm+\sqrt n)^{-1}$ satisfies
\[
1-[v'+(n-1)mv]
=\frac{(n+1)\sqrt n\,m}{(nm+\sqrt n)^2}>0
\qquad(0<r<\pi/2).
\]
As in the proof of Lemma \ref{model-signs}, $(f_1^{n-1}(w-v))'>0$ and $f_1^{n-1}(w-v)\to0$ as $r\downarrow0$, so $w>v$, which is equivalent to $d_+<\sqrt n\,w$. By \eqref{lambda-equals-k}, $k=2d_+/(nw^2)$ and $s=(nhw)^2$, we obtain the explicit expression
\begin{equation}\label{sphere-qpositive}
q_{n,1,h}(s)=nab+(1-s)\left(n-\frac{d_+^2}{w^2}\right)>0
\qquad(0<r\leq r_0).
\end{equation}
At the pole its continuous value is $n(1+h^2)>0$. At $h=h_{\SSS}(n)$ we have $r_0=\pi/2$ and $q_{n,1,h}(1)=na(r_0)b(r_0)=0$, since $b(r_0)=m(r_0)=\cot r_0$. For $h<h_{\SSS}(n)$ we have $r_0>\pi/2$, $b(r_0)=\cot r_0<0$ and $q_{n,1,h}(1)=na(r_0)b(r_0)<0$.

Now let $c=-1$. Define $r_{\HHH}$ by $nw(r_{\HHH})=1$, and on $0<r\leq r_{\HHH}$ put $d_-=nmw-1>0$ and $Y=nw$, so that $0<Y\leq1$. By \eqref{dsmall}, $nd_-<Y^2$. At $h=1$ we have $s=Y^2$, $b=nwm=1+d_-$ and $a=nw'=1-(n-1)d_-$, so the model curvatures satisfy
\[
K_{\rm mer}+(n-1)K_{\rm orb}=Y^2+nd_- .
\]
Because $k=-2nd_-/Y^2$ and $\Lambda=k^2$ by \eqref{lambda-equals-k}, the defect subtraction is
\[
\frac{Y^2(1-Y^2)}4\,k^2
=\frac{n^2d_-^2}{Y^2}(1-Y^2)<nd_-(1-Y^2).
\]
It follows that
\begin{equation}\label{hyper-critical}
q_{n,-1,1}(r)>Y^2(1+nd_-)>0\quad(0<r\leq r_{\HHH}),
\qquad q_{n,-1,1}(0)=0.
\end{equation}
For $h>1$ we have $r_0(h)<r_{\HHH}$. At each fixed $r\leq r_0(h)$, direct substitution of $s=h^2Y^2$, $a=h[1-(n-1)d_-]$, $b=h(1+d_-)$ and $k=-2nd_-/Y^2$ into \eqref{qPdef} gives
\begin{equation}\label{hyper-h-growth}
q_{n,-1,h}(r)-q_{n,-1,1}(r)
=(h^2-1)\bigl[Y^2+n(1+d_-)+n^2d_-^2\bigr]
\geq n(h^2-1).
\end{equation}
The inequality is strict for $r>0$, and the value at the pole equals $n(h^2-1)$. This proves \eqref{hyper-qmin}. For every admissible $h<1$, the pole value $n(h^2-1)$ is negative.
\end{proof}

\begin{remark}\label{sharp-convexity}
For $c=1$ the principal curvatures of the lower hemisphere of the reference sphere are $a=nhw'>0$ (Lemma \ref{bounds}(c)) and $b=nhwm$, whose sign is that of $\cot r$ on $(0,r_0]$; the upper hemisphere is its reflection. Hence, by the first paragraph of the proof of Lemma \ref{positive-q}, the reference sphere in $\SSS^n\times\RR$ is strictly convex if and only if $h>h_{\SSS}(n)$, and for $h=h_{\SSS}(n)$ its principal curvature $b$ vanishes along the equator. For $c=-1$ the reference sphere is strictly convex in the whole range \eqref{range}, since $m=\coth r>0$, while the method requires $h>1$.
\end{remark}

\begin{proof}[Proof of Theorem~\ref{main-sharp}]
Fix $n,c,h$ in the range \eqref{gap-range}, and write $q(s)=q_{n,c,h}(s)$ and $\mathcal P(s)=\mathcal P_{n,c,h}(s)$. Lemma~\ref{positive-q} gives $\min_{[0,1]}q>0$. For each $s\in[0,1]$, the function
\[
\Phi_s(u)=q(s)-\mathcal P(s)u-\frac{u^2}{4h^2},\qquad u\geq0,
\]
is strictly decreasing from $\Phi_s(0)>0$ and has a unique positive zero. Solving $\Phi_s(u)=0$ gives the expression inside the minimum in \eqref{radius}. Its continuous dependence on $s$ shows that $R_*>0$.

Set $M_B=\sup_\Sigma|B|<R_*$. Since $\Phi_s$ is decreasing,
\[
\Phi_s(u)\geq\Phi_s(M_B)>0
\qquad(0\leq u\leq M_B,\ 0\leq s\leq1).
\]
Continuity and compactness of $[0,1]$ give $\varepsilon_*:=\min_{s\in[0,1]}\Phi_s(M_B)>0$. Applying \eqref{polynomial-lower} to the integrated identity \eqref{main-int}, we obtain
\[
0\geq\int_\Sigma\bigl(|\nabla B|^2+\varepsilon_*|B|^2\bigr).
\]
Hence $B\equiv0$, and Theorem~\ref{main-equality} identifies $\Sigma$ with the rotational CMC sphere.
\end{proof}

We now compare the pinching \eqref{pinch-sharp} with convexity. Put $\eta_0=\sqrt{(n-1)/n}$, $\eta_1=1/\sqrt{n(n-1)}$,
\[
\mathfrak m(s)=\min\{a(s),b(s)\},\qquad
X_{\rm conv}(n,c,h)=\frac{2h}{\eta_0}\min_{[0,1]}\mathfrak m .
\]

\begin{proposition}\label{convex-criterion}
Let $n\geq3$ and let $h$ be in \eqref{gap-range}.
\begin{enumerate}
\item[(i)] $\mathfrak m>0$ on $[0,1]$. At a point with $|T|^2=s$ and $|B|<2h\mathfrak m(s)/\eta_0$, every principal curvature is positive. In particular, if $R_*(n,c,h)\leq X_{\rm conv}(n,c,h)$, every hypersurface satisfying the hypotheses of Theorem \ref{main-sharp} is strictly convex.
\item[(ii)] Conversely, for $s\in[0,1]$ and $x>2h\mathfrak m(s)/\eta_0$ there are a trace-free symmetric $B$ with $|B|=x$ and a vector $T$ with $|T|^2=s$ such that the operator
\[
A=h\Id+\frac1{2h}B+\frac{k(s)}{2h}\Big(T\otimes T-\frac sn\Id\Big)
\]
determined by \eqref{Bdef} has a negative eigenvalue. In particular, if $R_*(n,c,h)>X_{\rm conv}(n,c,h)$, the pinching region of Theorem \ref{main-sharp} contains such algebraic data.
\end{enumerate}
\end{proposition}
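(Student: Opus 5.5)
The whole argument is a pointwise eigenvalue computation for $A=h\Id+\frac1{2h}B+E$, where
\[
E=\frac{k}{2h}\Big(T\otimes T-\frac sn\Id\Big).
\]
The key observation is that $h\Id+E$ is exactly the model operator $A_0$ from \eqref{a0}. Its eigenvalues are $a(s)$ on $T$ and $b(s)$ on $T^\perp$, by \eqref{ab}. So $\lambda_{\min}(h\Id+E)=\mathfrak m(s)$, and we compare against this.

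For (i), positivity of $\mathfrak m$ comes from the model. We have $a=nhw'>0$ by Lemma \ref{bounds}(c), and $b=nhwm$. For $c=1$, $h>h_{\SSS}(n)$ gives $r_0<\pi/2$ (first paragraph of the proof of Lemma \ref{positive-q}), hence $m=\cot r>0$ on $(0,r_0]$. For $c=-1$, $m=\coth r>0$. At $s=0$ both $a$ and $b$ equal $h>0$. Continuity on the compact interval $[0,1]$ then gives $\min\mathfrak m>0$.

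The convexity claim in (i) follows by Weyl's inequality. A trace-free $B$ satisfies $\lambda_{\min}(B)\geq-\eta_0|B|$, so
\[
\lambda_{\min}(A)\geq\mathfrak m(s)-\frac{\eta_0|B|}{2h}>0
\]
whenever $|B|<2h\mathfrak m(s)/\eta_0$. Now suppose $\sup|B|<R_*\leq X_{\rm conv}$. Then $|B|<2h\min\mathfrak m/\eta_0\leq2h\mathfrak m(s)/\eta_0$ at every point, so every principal curvature is positive.

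For (ii), we choose $B$ to attain both the Weyl bound and the Okumura extremal direction. Let $e$ be a unit eigenvector of $A_0$ for its smallest eigenvalue $\mathfrak m(s)$. If $\mathfrak m=a$, take $e=T/\sqrt s$, or any unit vector when $s=0$. If $\mathfrak m=b$, take $e\perp T$. Set
\[
B=x\eta_0\Big(-e\otimes e+\frac1{n-1}(\Id-e\otimes e)\Big),
\]
which is trace-free with $|B|=x$. Because $e$ is an eigenvector of $A_0$, the operators $B$ and $A_0$ commute. The eigenvalue of $A$ on $e$ is therefore exactly $\mathfrak m(s)-\frac{x\eta_0}{2h}$, which is negative once $x>2h\mathfrak m(s)/\eta_0$.

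The "in particular" clause follows by choosing the data appropriately. Pick $s_0$ that minimizes $\mathfrak m$, so that $X_{\rm conv}=2h\mathfrak m(s_0)/\eta_0$. Then pick $x\in(X_{\rm conv},R_*)$ and take $T$ with $|T|^2=s_0$. These data satisfy the pinching \eqref{pinch-sharp} pointwise, and $A$ has a negative eigenvalue.

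The only delicate point is the identification of the minimal eigenvalue of $h\Id+E$ with $\min\{a,b\}$. This rests on the definition \eqref{ab} of $a(s)$ and $b(s)$, which holds for arbitrary $T$ and not only on the model. The rest is routine.
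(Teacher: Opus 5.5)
Your proposal is correct and follows essentially the same route as the paper. You split $A=A_0+\frac1{2h}B$ with $A_0$ having eigenvalues $a,b$, prove $\mathfrak m>0$ from $a=nhw'>0$ and $m>0$ on $(0,r_0]$, use $\lambda_{\min}(B)\geq-\eta_0|B|$, and take the same extremal $B$ (note $\eta_0/(n-1)=\eta_1$). The only minor difference is that you observe $B$ commutes with $A_0$, so $e$ is an exact eigenvector of $A$, whereas the paper just uses $\ip{Ae}{e}<0$.
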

\begin{proof}
By \eqref{Bdef}, $A=A_0+\frac1{2h}B$, where $A_0=h\Id+\frac{k}{2h}(T\otimes T-\frac sn\Id)$. By \eqref{ab}, $A_0T=a(s)T$ and $A_0X=b(s)X$ for $X\perp T$; when $s=0$, $A_0=h\Id$ and $a(0)=b(0)=h$. We have $a=nhw'>0$ by Lemma \ref{bounds}(c), and $b=nhwm>0$ because $m>0$ on $(0,r_0]$: for $c=-1$ always, and for $c=1$ because $r_0<\pi/2$ (proof of Lemma \ref{positive-q}). So $\mathfrak m>0$.

(i) Since $B$ is trace-free, $\lambda_{\min}(B)\geq-\eta_0|B|$ (proof of Lemma \ref{curvterm}). Hence $\lambda_{\min}(A)\geq\mathfrak m(s)-\eta_0|B|/(2h)>0$. If $|B|\leq\sup_\Sigma|B|<R_*\leq X_{\rm conv}\leq2h\mathfrak m(s)/\eta_0$ everywhere, $\Sigma$ is strictly convex.

(ii) Let $e$ be a unit vector with $A_0e=\mathfrak m(s)e$: $e=T/\sqrt s$ if $s>0$ and $\mathfrak m(s)=a(s)$, and $e\perp T$ otherwise (for $s=0$, $T=0$ and $e$ is arbitrary). Put $B=x\big(-\eta_0\,e\otimes e+\eta_1(\Id-e\otimes e)\big)$. Then $\tr B=x(-\eta_0+(n-1)\eta_1)=0$ and $|B|^2=x^2(\eta_0^2+(n-1)\eta_1^2)=x^2$. Moreover
\[
\ip{Ae}{e}=\mathfrak m(s)-\frac{\eta_0x}{2h}<0 .
\]
If $R_*>X_{\rm conv}$, choose $s$ minimizing $\mathfrak m$ and $x\in(X_{\rm conv},R_*)$.
\end{proof}

\begin{remark}\label{sharp-remark}
Proposition \ref{convex-criterion} gives the precise pointwise convexity test for the threshold $R_*$. The comparison is carried out below: $R_*<X_{\rm conv}$ for every $h>1$ in $\HHH^n\times\RR$, and $R_*>X_{\rm conv}$ in $\SSS^n\times\RR$ on the range of Theorem \ref{main-compare}(ii); see Remark \ref{algebraic-remark} for the meaning of the latter inequality.
\end{remark}

\subsection*{Comparison with convexity for all admissible $h$}
For $u\geq0$ and $s\in[0,1]$ put, as in the proof of Theorem \ref{main-sharp},
\[
\Phi_s(u)=q_{n,c,h}(s)-\mathcal P_{n,c,h}(s)\,u-\frac{u^2}{4h^2},\qquad p_0=\frac{n(n-2)}{2\sqrt{n(n-1)}} .
\]
Each $\Phi_s$ is strictly decreasing, and its unique positive zero $R_h(s)$ is the expression inside the minimum in \eqref{radius}. Hence, for $X>0$, $R_h(s)>X$ if and only if $\Phi_s(X)>0$. We write $\bar a=a(1)$, $\bar b=b(1)$ and $\bar k=k(1)$.

\begin{lemma}\label{equator}
Let $n\geq3$ and let $h$ satisfy \eqref{gap-range}.
\begin{enumerate}
\item[(i)] If $c=1$, then $\mathfrak m=b$, $b$ is decreasing in $s$, and $X_{\rm conv}=2hb_1/\eta_0$. Moreover
\begin{equation}\label{equator-identity}
\Phi_1(X_{\rm conv})=\frac{(n-2)^2}{2(n-1)h}\,\bar k\bar b>0,
\end{equation}
and, with $\beta(s)=b(s)-\bar b\geq0$,
\begin{equation}\label{equator-difference}
\Phi_s(X_{\rm conv})-\Phi_1(X_{\rm conv})=n\,\beta(s)\big(a(s)-(n-3)\bar b\big)+(1-s)\Big(n-\frac{k(s)^2s}{4h^2}\Big).
\end{equation}
\item[(ii)] If $c=-1$, then $\mathfrak m=a$, $a$ is decreasing in $s$, and $X_{\rm conv}=2ha_1/\eta_0$. Moreover, with $\bar\delta=h-\bar a=\frac{(n-1)|\bar k|}{2hn}$,
\begin{equation}\label{pole-identity}
\Phi_0(X_{\rm conv})=n\Big(\frac{|\bar k|}{2}-1\Big)-\frac{n\,\bar\delta^2}{n-1}.
\end{equation}
\end{enumerate}
\end{lemma}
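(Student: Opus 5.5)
The proof is a direct computation in the model quantities. Three inputs are used throughout: the relations \eqref{ab}, namely $a=h+\frac{(n-1)ks}{2hn}$ and $b=h-\frac{ks}{2hn}$, hence $a=nh-(n-1)b$ and $ks=2hn(h-b)$; the sign of $k$ from Lemma \ref{bounds}(d); and the sign of $\zeta$ from Lemma \ref{model-signs}. I read $b_1,a_1$ in the statement as $\bar b=b(1)$ and $\bar a=a(1)$.

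\emph{Identifying $\mathfrak m$ and its monotonicity.} Since $a-b=ks/(2h)$, we get $\mathfrak m=b$ for $c=1$ (where $k>0$) and $\mathfrak m=a$ for $c=-1$ (where $k<0$). For monotonicity I would work in the radial variable. Since $b=nhwm$, \eqref{odew} gives
\[
(wm)'=w'm+wm'=m-nm^2w-cw=w\zeta .
\]
By Lemma \ref{model-signs}, $b$ is therefore decreasing in $r$ for $c=1$; here I use $r_0<\pi/2$, which holds on \eqref{gap-range} by the proof of Lemma \ref{positive-q}. For $c=-1$, $b$ is increasing, so $a=nh-(n-1)b$ is decreasing. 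Since $r(s)$ is increasing in $s$, $\min\mathfrak m$ is attained at $s=1$ in both cases. This gives $X_{\rm conv}=2h\bar b/\eta_0$ for $c=1$ and $X_{\rm conv}=2h\bar a/\eta_0$ for $c=-1$.

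\emph{The identity \eqref{equator-identity}.} At $s=1$ the factor $1-s$ kills everything in $q$ except $n\bar a\bar b$, and $\mathcal P(1)=p_0+\bar k\eta_0/(2h^2)$. With $X=2h\bar b/\eta_0$, and using $p_0/\eta_0=\frac{n(n-2)}{2(n-1)}$ and $\eta_0^{-2}=\frac n{n-1}$, we obtain
\[
\Phi_1(X)=\bar b\Big[n\bar a-\tfrac{n(n-2)}{n-1}h-\tfrac{\bar k}{h}-\tfrac{n}{n-1}\bar b\Big].
\]
I then substitute $\bar a,\bar b$ from \eqref{ab}. The $h$-terms cancel, and the $\bar k/h$ coefficient is $\frac{n-1}2-1+\frac1{2(n-1)}=\frac{(n-2)^2}{2(n-1)}$. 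Positivity follows from $\bar k>0$ and $\bar b>0$.

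\emph{The difference \eqref{equator-difference}.} By \eqref{lambda-equals-k}, $\Lambda=k^2$ (again since $r_0<\pi/2$), so
\[
q(s)=n(1-s)+nab-\frac{s(1-s)k^2}{4h^2}.
\]
Using $ks=2hn(h-b)$, the $\mathcal P$-difference is
\[
\big(\mathcal P(s)-\mathcal P(1)\big)X=\frac{\eta_0(ks-\bar k)}{2h^2}\cdot\frac{2h\bar b}{\eta_0}=-2n\beta\bar b .
\]
For the $q$-difference, the relation $a=nh-(n-1)b$ gives $ab-\bar a\bar b=\beta\big(a-(n-1)\bar b\big)$. Combining the two, and keeping the $(1-s)$ terms, yields exactly $n\beta(a-(n-3)\bar b)+(1-s)(n-k^2s/(4h^2))$.

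\emph{The hyperbolic identity \eqref{pole-identity}.} By \eqref{hyper-qmin} the pole value is $q(0)=n(h^2-1)$, and $\mathcal P(0)=p_0$. With $X=2h\bar a/\eta_0$ and $\bar a=h-\bar\delta$, expanding gives
\[
\Phi_0(X)=n(h^2-1)-\frac{n(n-2)h(h-\bar\delta)+n(h-\bar\delta)^2}{n-1}.
\]
The $h^2$ terms cancel, leaving $-n+\frac{n^2h\bar\delta}{n-1}-\frac{n\bar\delta^2}{n-1}$. Finally $\frac{n^2h\bar\delta}{n-1}=\frac{n|\bar k|}2$.

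The main obstacle is bookkeeping rather than conceptual. One must correctly invoke $\Lambda=k^2$ and the endpoint values of $q$ (in particular that $r_0<\pi/2$ on the spherical range), and must keep track of the signs of $k$ in $|k|$ inside $\mathcal P$. The monotonicity step via $(wm)'=w\zeta$ is the only nonalgebraic ingredient.
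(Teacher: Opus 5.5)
Your proposal is correct and is essentially the paper's proof. You use the same identification of $\mathfrak m$ from $a-b=ks/(2h)$ and the sign of $k$, and the same inputs $\Lambda=k^2$, $q(1)=n\bar a\bar b$ and $q(0)=n(h^2-1)$. The algebraic reductions for \eqref{equator-identity}, \eqref{equator-difference} and \eqref{pole-identity} are also the same; the paper substitutes $h=(\bar a+(n-1)\bar b)/n$ where you substitute $\bar a,\bar b$ in terms of $h,\bar k$, a trivial difference.

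The only genuine variation is the monotonicity step. You use $(wm)'=w\zeta$ together with Lemma \ref{model-signs}. The paper instead uses the sign of $(ks)'$: via $\chi\in(-1,0)$ from Lemma \ref{sharp-defect} for $c=1$, and via $k<0$, $k'<0$ for $c=-1$. Since $ks=2hn(h-b)$, the two arguments are equivalent, and yours is slightly more direct.
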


\begin{proof}
By \eqref{ab}, $a-b=ks/(2h)$, $b=h-ks/(2hn)$ and $nh=a+(n-1)b$. Recall also $p_0/\eta_0=n(n-2)/(2(n-1))$ and $1/\eta_0^2=n/(n-1)$.

(i) Let $c=1$. Then $h>h_{\SSS}(n)$ gives $r_0<\pi/2$ (proof of Lemma \ref{positive-q}), so $\Lambda=k^2$ by \eqref{lambda-equals-k}, and $k>0$ by Lemma \ref{bounds}(d). Thus
\[
q_{n,1,h}(s)=nab+(1-s)\Big(n-\frac{k^2s}{4h^2}\Big),\qquad \mathcal P_{n,1,h}(s)=p_0+\frac{\eta_0ks}{2h^2}.
\]
By the proof of Lemma \ref{sharp-defect} (spherical case), $\chi=sk'/k\in(-1,0)$, so $0<(ks)'=k(1+\chi)<k$. Hence $ks$ is increasing, $b$ is decreasing, and $a\geq b$; therefore $\mathfrak m=b$ and $X:=X_{\rm conv}=2hb_1/\eta_0$. Using $ks=2h(a-b)$,
\[
\mathcal P(s)X=\frac{n(n-2)}{n-1}\,hb_1+2(a-b)\bar b,\qquad \frac{X^2}{4h^2}=\frac{n}{n-1}\,\bar b^2 .
\]
At $s=1$ the defect term disappears, and substituting $h=(\bar a+(n-1)\bar b)/n$,
\[
\Phi_1(X)=\bar b\Big[na_1-\frac{n-2}{n-1}\big(\bar a+(n-1)\bar b\big)-2(\bar a-\bar b)-\frac{n}{n-1}\bar b\Big]=\frac{(n-2)^2}{n-1}\,\bar b(\bar a-\bar b),
\]
which is \eqref{equator-identity} since $\bar a-\bar b=\bar k/(2h)$. For \eqref{equator-difference}, note $a-\bar a=-(n-1)\beta$ and $(a-b)-(\bar a-\bar b)=-n\beta$. Then $ab-\bar ab_1=a\beta-(n-1)\bar b\beta$, and
\[
\Phi_s(X)-\Phi_1(X)=n(ab-\bar ab_1)-2\bar b\big[(a-b)-(\bar a-\bar b)\big]+(1-s)\Big(n-\frac{k^2s}{4h^2}\Big),
\]
which equals the right side of \eqref{equator-difference}. Finally $\bar b>0$ (proof of Proposition \ref{convex-criterion}) and $\bar k>0$.

(ii) Let $c=-1$. Then $k<0$ and $k'<0$ (Lemma \ref{bounds}(d),(f)), so $(ks)'=k+sk'<0$; hence $a=h-(n-1)|k|s/(2hn)$ is decreasing and $a\leq b$. Therefore $\mathfrak m=a$ and $X=2ha_1/\eta_0$. At the pole $q_{n,-1,h}(0)=n(h^2-1)$ by \eqref{hyper-qmin}, and $\mathcal P(0)=p_0$. Thus
\[
\Phi_0(X)=nh^2-n-\frac{n(n-2)}{n-1}\,ha_1-\frac{n}{n-1}\,\bar a^2 .
\]
Substituting $\bar a=h-\bar\delta$, the terms in $h^2$ cancel and $\Phi_0(X)=-n+\frac{n^2}{n-1}h\bar\delta-\frac{n}{n-1}\bar\delta^2$. Since $\frac{n^2}{n-1}h\bar\delta=\frac n2|\bar k|$, this is \eqref{pole-identity}.
\end{proof}

\begin{proof}[Proof of Theorem \ref{main-compare}]
(i) By Lemma \ref{bounds}(d), $|\bar k|<\mathcal K_{n,-1}<2$, so \eqref{pole-identity} gives $\Phi_0(X_{\rm conv})<0$. Hence $R_*\leq R_h(0)<X_{\rm conv}$, and Proposition \ref{convex-criterion}(i) gives strict convexity.

(ii) Let $c=1$ and $X=X_{\rm conv}$. By \eqref{equator-identity} it suffices to show that the right side of \eqref{equator-difference} is nonnegative for $s\in[0,1)$; then $\Phi_s(X)>0$, that is, $R_h(s)>X$, for every $s\in[0,1]$, and $R_*=\min_sR_h(s)>X$ by compactness. With $d_+=1-nmw$ as in Lemma \ref{sharp-defect}, $k=2d_+/(nw^2)$ and $s=n^2h^2w^2$ give $k^2s/(4h^2)=d_+^2/w^2<n$ (proof of Lemma \ref{positive-q}); so the second term of \eqref{equator-difference} is positive for $s<1$. Moreover $\beta\geq0$ and $a\geq h\geq \bar b$, since $k\geq0$.

If $n=3$, then $a-(n-3)\bar b=a>0$; if $n=4$, then $a-\bar b\geq0$. In both cases the first term of \eqref{equator-difference} is nonnegative.

Let $n\geq5$. Then $a-(n-3)\bar b\geq-(n-4)h$. Since $0<(ks)'<k<F_0$ (Lemma \ref{bounds}(d)),
\[
\beta(s)=\frac{\bar k-k(s)s}{2hn}=\frac1{2hn}\int_s^1(k+tk')\,dt\leq\frac{F_0(1-s)}{2hn},
\]
so the first term is at least $-(n-4)F_0(1-s)/2=-(1-s)\,n(n-4)/(n+2)$. Using $k<F_0$ in the second term,
\[
\Phi_s(X)-\Phi_1(X)\geq(1-s)\,\frac{n}{n+2}\Big(6-\frac{n}{(n+2)h^2}\Big)\geq0
\]
whenever $h^2\geq n/(6(n+2))$, which is the hypothesis of (ii). The last assertion of (ii) follows from Proposition \ref{convex-criterion}(ii).
\end{proof}

\begin{remark}\label{window}
Since $\Phi_s$ is strictly decreasing, \eqref{equator-identity} also gives the quantitative bound $R_h(1)-X_{\rm conv}=\Phi_1(X_{\rm conv})/\big(\mathcal P(1)+(R_h(1)+X_{\rm conv})/(4h^2)\big)$; as $h\to\infty$ this tends to $\frac{2(n-2)^2}{(n+2)\sqrt{n(n-1)}}$, in agreement with Theorem \ref{main-asymp}(c). For $n\geq5$ the window $h_{\SSS}(n)<h<\sqrt{n/(6(n+2))}$ is not covered by Theorem \ref{main-compare}(ii); it is narrow, for instance $0.3395<h<0.3450$ when $n=5$. There, numerical integration of the profile gives $R_*-X_{\rm conv}>0$; for instance, at the midpoint of the window one finds $0.0030$, $0.0271$, $0.0791$ and $0.1302$ for $n=5,6,8,10$, and at every sampled $h$ the minimum of $s\mapsto\Phi_s(X_{\rm conv})$ is attained at $s=1$. We state this only as evidence: a proof in this window would require a sharper bound than $k\leq F_0$ in \eqref{equator-difference}.
\end{remark}

\section{Joint optimization of the algebraic estimate}\label{opt-section}
In the proof of \eqref{curvlower}, several terms of \eqref{curvexact} were estimated separately: Okumura's inequality for $\sum\beta_i^3$, and the bounds for $|BT|^2$ and $\ip{BT}{T}$. These estimates are not attained simultaneously. The cubic term is extremal when $B$ has a simple eigenvalue of sign opposite to its $(n-1)$-fold eigenvalue. The worst sign of $\ip{BT}{T}$, however, requires $T$ along a direction where the other terms are favorable. We now minimize the algebraic part of the estimate jointly. Two relaxations remain: the coefficient $k=k(s)$ is replaced by an interval of values independent of $s$, and the defect terms are still controlled by the uniform bound \eqref{error1}. The equality case of Okumura's inequality \cite[Lemma (2.6)]{AdC}, applied to the eigenvalues other than a prescribed one, makes the reduction exact.

\begin{lemma}\label{okumura-y}
Let $n\geq3$, {$\bar n=n-1$, $c_{\bar n}=(\bar n-2)/\sqrt{\bar n(\bar n-1)}$}, $x>0$ and $y^2\leq\frac{n-1}{n}x^2$. Among $\beta\in\RR^n$ with $\sum_i\beta_i=0$, $\sum_i\beta_i^2=x^2$ and $\beta_n=y$,
\begin{equation}\label{Cn}
\min\sum_i\beta_i^3=C_n(x,y):=\Big(1-\frac1{\bar n^2}\Big)y^3-\frac{3y}{\bar n}\varrho^2-c_{\bar n}\varrho^3,
\end{equation}
where {$\varrho^2=x^2-\frac{n}{n-1}y^2$}. The minimum is attained exactly when $\beta_i=-\frac y{\bar n}+z_i$ ($i<n$), where {$z\in\RR^{\bar n}$} has $\sum z_i=0$, $|z|=\varrho$, and $\bar n-1$ equal nonnegative entries (for $\bar n=2$, any such $z$).
\end{lemma}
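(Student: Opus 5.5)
The proof reduces to Okumura's inequality, with equality case, for $\bar n=n-1$ numbers. Fix $\beta_n=y$ and write the remaining eigenvalues as $\beta_i=-\frac{y}{\bar n}+z_i$ for $i<n$. The constraint $\sum_i\beta_i=0$ becomes $\sum_{i<n}z_i=0$. Expanding $\sum_i\beta_i^2=x^2$ gives
\[
y^2+\frac{y^2}{\bar n}+|z|^2=x^2 ,
\]
so $|z|^2=x^2-\frac{n}{n-1}y^2=\varrho^2$. The hypothesis $y^2\leq\frac{n-1}{n}x^2$ is exactly $\varrho^2\geq0$. The admissible set is therefore parametrized bijectively by trace-free $z\in\RR^{\bar n}$ with $|z|=\varrho$.

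Next I expand the cubic sum:
\[
\sum_i\beta_i^3=y^3+\sum_{i<n}\Big(-\frac y{\bar n}+z_i\Big)^3
=y^3-\frac{y^3}{\bar n^2}+\frac{3y^2}{\bar n^2}\sum z_i-\frac{3y}{\bar n}|z|^2+\sum z_i^3 .
\]
Since $\sum z_i=0$ and $|z|=\varrho$, this equals $\big(1-\frac1{\bar n^2}\big)y^3-\frac{3y}{\bar n}\varrho^2+\sum z_i^3$. The first two terms depend only on $(x,y)$. Minimizing $\sum\beta_i^3$ is therefore equivalent to minimizing $\sum z_i^3$ over trace-free $z\in\RR^{\bar n}$ with $|z|=\varrho$.

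For $\bar n\geq3$, Okumura's inequality in dimension $\bar n$ gives $\sum z_i^3\geq-c_{\bar n}\varrho^3$. By its equality case \cite[Lemma (2.6)]{AdC}, equality holds exactly when $\bar n-1$ entries of $z$ are equal and nonnegative (for $\varrho>0$; when $\varrho=0$, $z=0$ trivially fits the description). When $\bar n=2$, i.e. $n=3$, we have $c_{\bar n}=0$. A trace-free $z$ then has the form $(t,-t)$, so $\sum z_i^3=0$ identically; the minimum $0$ is attained by every such $z$, matching the parenthetical clause. Substituting back yields $C_n(x,y)$ and the stated characterization of minimizers.

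The only step needing care is recalling the precise equality case in \cite{AdC}. In particular, one must check the sign convention: the $\bar n-1$ equal entries are nonnegative, so the single distinct entry is negative. One verifies this directly: the candidate $z=\varrho\,(\eta_1,\dots,\eta_1,-\eta_0)$, with the constants for dimension $\bar n$, gives $\sum z_i^3=-c_{\bar n}\varrho^3$, while the opposite sign gives $+c_{\bar n}\varrho^3$. Everything else is the routine expansion above.
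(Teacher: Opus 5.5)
Your proposal is correct and follows the paper's proof essentially verbatim: the same shift $\beta_i=-y/\bar n+z_i$, the same expansion of the quadratic and cubic sums, and the same appeal to Okumura's inequality in dimension $\bar n$ together with the equality case of \cite[Lemma (2.6)]{AdC}. Your separate treatment of $\bar n=2$ and of $\varrho=0$, and your check of the sign of the extremal configuration, are harmless additions that the paper leaves implicit.
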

\begin{proof}
Write $\beta_i=\bar y+z_i$ for $i<n$, with {$\bar y=-y/\bar n$} and $\sum_{i<n}z_i=0$. Then $\sum_{i<n}z_i^2=x^2-y^2-\bar n\bar y^2=\varrho^2$, and
\[
\sum_i\beta_i^3=y^3+\bar n\bar y^3+3\bar y\varrho^2+\sum_{i<n}z_i^3 .
\]
Okumura's inequality \cite{Okumura}, in the form proved by Alencar and do Carmo \cite[Lemma (2.6)]{AdC}, gives $\sum z_i^3\geq-c_{\bar n}\varrho^3$ in $\RR^{\bar n}$. By the equality statement of \cite[Lemma (2.6)]{AdC}, equality in the lower bound holds if and only if $\bar n-1$ of the $z_i$ are nonnegative and equal. Substituting $\bar y$ gives \eqref{Cn}.
\end{proof}

Let $F_0$, $\mathcal K$, $Q$ be as in Lemma \ref{bounds}. By Lemma \ref{bounds}(d) and $w\leq1/(nh)$ on $(0,r_0]$, the coefficient takes values in
\begin{equation}\label{kint}
I_{n,c}(h)=[k_{\rm lo},F_0],\qquad
k_{\rm lo}=\begin{cases}\max\{0,\,F_0-Q/(n^2h^2)\},&c=1,\\ \max\{-\mathcal K,\,F_0-Q/(n^2h^2)\},&c=-1.\end{cases}
\end{equation}
For $x>0$, $s\in[0,1]$, $k\in\RR$ and $|y|\leq\sqrt{(n-1)/n}\,x$, put
\begin{equation}\label{Gdef}
\begin{aligned}
G(y,s,k;x)={}&\frac n2C_n(x,y)+\Big(c(n-s)-\frac{ks}{2}\Big)x^2\\
&+s\Big(n\big(\tfrac k2-c\big)-\frac{k^2s}{4h^2}\Big)y^2-\frac{ksx^2}{2h^2}\,y ,
\end{aligned}
\end{equation}
and
\begin{equation}\label{Thetadef}
\Theta(x)=\min\Big\{G(y,s,k;x):\ s\in[0,1],\ k\in I_{n,c}(h),\ |y|\leq\sqrt{\tfrac{n-1}{n}}\,x\Big\}.
\end{equation}
The function $G$ is concave in $k$ (the coefficient of $k^2$ is $-s^2y^2/(4h^2)\leq0$) and concave in $s$, since $\partial_s^2G=-k^2y^2/(2h^2)\leq0$. For fixed $y$, first minimize in $s$ for fixed $k$, then in $k$ for fixed $s\in\{0,1\}$. Each minimum of a concave function of one variable over an interval is attained at an endpoint. At $s=0$, $G$ does not depend on $k$. Hence
\begin{equation}\label{Theta-vertices}
\Theta(x)=\min_{|y|\leq\eta_0x}\min\big\{G(y,0,F_0;x),\ G(y,1,k_{\rm lo};x),\ G(y,1,F_0;x)\big\},
\end{equation}
where $\eta_0=\sqrt{(n-1)/n}$. We also set $\Theta(0)=0$, which is the value of the same formula at $x=0$.

\emph{Continuity.} Write $y=x\eta$ with $|\eta|\leq\eta_0$. Since $C_n(x,x\eta)=x^3C_n(1,\eta)$, \eqref{Gdef} gives $G(x\eta,s,k;x)=x^2\,\tilde G(\eta,s,k;x)$, where
\[
\tilde G(\eta,s,k;x)=\tfrac n2\,x\,C_n(1,\eta)+c(n-s)-\frac{ks}2+s\Big(n\big(\tfrac k2-c\big)-\frac{k^2s}{4h^2}\Big)\eta^2-\frac{ksx}{2h^2}\,\eta
\]
is continuous on $[-\eta_0,\eta_0]\times[0,1]\times I_{n,c}(h)\times[0,\infty)$; here $C_n(1,\eta)$ is continuous in $\eta$ because $\varrho=(1-\frac{n}{n-1}\eta^2)^{1/2}$ is. The domain of $(\eta,s,k)$ is a fixed compact set, so $\tilde\Theta(x)=\min_{\eta,s,k}\tilde G(\eta,s,k;x)$ is continuous on $[0,\infty)$, and $\Theta(x)=x^2\tilde\Theta(x)$ for all $x\geq0$. Finally let
\[
\begin{gathered}
\Pi(x)=x^2\Big(nh^2-\frac{\varepsilon}{h^2}-\frac{x^2}{4h^2}\Big)+\Theta(x),\\
x_{\rm opt}=\sup\{X>0:\ \Pi>0\text{ on }(0,X]\},
\end{gathered}
\]
with $x_{\rm opt}=0$ if this set is empty. Note $\Pi(0)=0$, so the integrand below vanishes where $B=0$. Moreover $\Pi(x)=x^2\tilde\Pi(x)$ with $\tilde\Pi(x)=nh^2-\varepsilon/h^2-x^2/(4h^2)+\tilde\Theta(x)$ continuous on $[0,\infty)$. Hence $x_{\rm opt}>0$ whenever $\tilde\Pi(0)>0$, and if $0<x_{\rm opt}<\infty$, then $\Pi(x_{\rm opt})=0$ and $x_{\rm opt}$ is the first positive zero of $\Pi$. This is the form in which $x_{\rm opt}$ is used below.

\begin{lemma}\label{joint}
At each point of a CMC hypersurface in the range \eqref{range},
\begin{equation}\label{jointbound}
\sum_{i<j}K_{ij}(\beta_i-\beta_j)^2\geq|B|^2\Big(nh^2-\frac{|B|^2}{4h^2}\Big)+\Theta(|B|).
\end{equation}
For fixed $x=|B|$, $s$ and $k$, the reduction is exact in the algebraic problem: the right side of \eqref{curvexact}, minimized over all trace-free $B$ with $|B|=x$ and all $T$ with $|T|^2=s$, equals $x^2(nh^2-x^2/(4h^2))+\min_yG(y,s,k;x)$. This exactness does not extend to hypersurfaces, since $k$ is relaxed to $I_{n,c}(h)$ and the defect terms are estimated separately.
\end{lemma}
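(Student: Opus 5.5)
The starting point is the exact formula \eqref{curvexact} of Lemma \ref{curvterm}. I would regroup its right side as
\[
|B|^2\Big(nh^2-\frac{|B|^2}{4h^2}\Big)+\mathcal R,
\]
where
\[
\mathcal R=\frac n2\sum_i\beta_i^3+\Big(c(n-s)-\frac{ks}{2}\Big)|B|^2+s\Big(n\big(\tfrac k2-c\big)-\frac{k^2s}{4h^2}\Big)\frac{|BT|^2}{s}-\frac{k}{2h^2}|B|^2\ip{BT}{T}.
\]
Comparing with \eqref{Gdef}, the goal is $\mathcal R\geq G(y,s,k;x)$ for suitable $x,y,s,k$. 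Then \eqref{jointbound} follows because $\Theta$ is the minimum of $G$ over a set containing the actual triple $(y,s,k)$.

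\textbf{Parametrization.} Set $x=|B|$ and $s=|T|^2$. When $s>0$, write $e=T/\sqrt s$ and $y=\ip{Be}{e}$, so that $\ip{BT}{T}=sy$ and $|y|\leq\eta_0x$. The term $|BT|^2=s|Be|^2$ is the obstacle, since $G$ contains $sy^2$ rather than $s|Be|^2$.

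\textbf{Handling $|BT|^2$.} Decompose $Be=ye+v$ with $v\perp e$, so $|Be|^2=y^2+|v|^2$. I would choose the frame adapted to $e$: put $e_n=e$, so that $B_{nn}=y$. The cubic term $\sum\beta_i^3=\tr B^3$ is frame-independent. I would then show that the coefficient of $|v|^2$ can be handled by the sign of $n(\tfrac k2-c)-\tfrac{k^2s}{4h^2}$.

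\textbf{Case $n(\tfrac k2-c)-\tfrac{k^2s}{4h^2}\geq0$.} Dropping the $|v|^2$ contribution only lowers the expression. It remains to bound $\tr B^3$ below by $C_n(x,y)$. I would write $B$ in block form with respect to $e$, $e^\perp$. The claim $\tr B^3\geq C_n(x,y)$ for a nondiagonal $B$ needs a further argument. Plan: if $v\neq0$, rotate within the span of $e$ and $v$ to diagonalize, and compare.

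A cleaner route is to avoid this entirely by noting that $G$ is only needed at the actual eigen-configuration. Instead, choose $y$ to be an eigenvalue of $B$ and compare the terms via the actual $T$ components. This does not match directly, though.

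\textbf{Fallback, which I would actually pursue.} Since $\Theta$ minimizes over the full interval $y\in[-\eta_0x,\eta_0x]$, it suffices to exhibit, for the actual $(B,T)$, some $y$ in that interval with $\mathcal R\geq G(y,s,k;x)$.

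In the diagonalizing frame write $T_i^2=s\tau_i$ with $\sum\tau_i=1$. The two $T$-terms are then linear in the probability vector $\tau$:
\[
\tau\mapsto s\Big(n\big(\tfrac k2-c\big)-\tfrac{k^2s}{4h^2}\Big)\sum\tau_i\beta_i^2-\frac{ksx^2}{2h^2}\sum\tau_i\beta_i .
\]
A linear function on the simplex attains its minimum at a vertex $\tau=e_j$. So $\mathcal R$ is at least its value with $T$ along the eigenvector $e_j$ for some $j$.

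With $y=\beta_j$ this value is exactly
\[
\frac n2\tr B^3+\Big(c(n-s)-\frac{ks}{2}\Big)x^2+s\Big(n\big(\tfrac k2-c\big)-\frac{k^2s}{4h^2}\Big)y^2-\frac{ksx^2}{2h^2}y .
\]
Lemma \ref{okumura-y}, applied with the prescribed eigenvalue $\beta_j=y$, gives $\tr B^3\geq C_n(x,y)$, and $|y|\leq\eta_0x$ holds for any eigenvalue of a trace-free $B$. Hence $\mathcal R\geq G(\beta_j,s,k;x)\geq\Theta(x)$, using $k=k(s)\in I_{n,c}(h)$ by Lemma \ref{bounds}(d) and $w\leq1/(nh)$. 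The case $s=0$ is immediate.

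\textbf{Exactness.} For the exactness statement, the same vertex reduction, combined with the equality case of Lemma \ref{okumura-y}, realizes the value $\min_yG$. Take $T=\sqrt s\,e_j$ and $B$ diagonal with the extremal spectrum for the minimizing $y$. The main subtlety is the vertex (linearity) argument; once that is in place the rest is routine.
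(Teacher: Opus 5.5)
Your fallback argument is correct and is essentially the paper's proof: in the eigenframe of $B$ the right side of \eqref{curvexact} is affine in $(T_i^2)$, so it is minimized at a vertex where $T$ is an eigenvector of $B$ with eigenvalue $y=\beta_j$. Lemma \ref{okumura-y} (with its equality case for the exactness claim), together with $k(s)\in I_{n,c}(h)$, then finishes the argument, and the earlier exploratory paragraphs (the frame adapted to $T/\sqrt s$ and the sign case split) are unnecessary and can be dropped.
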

\begin{proof}
In the eigenframe of $B$ put $t_i=T_i^2$, so that $t_i\geq0$ and $\sum t_i=s$. For fixed $\beta$, $s$ and $k=k(s)$, the right side of \eqref{curvexact} is affine in $t$. Indeed, $|BT|^2=\sum\beta_i^2t_i$ and $\ip{BT}T=\sum\beta_it_i$. Its minimum over the simplex is therefore attained at a vertex $t=s\,\delta_{j}$, that is, with $T$ an eigenvector of $B$ with eigenvalue $y=\beta_j$. At such a vertex the right side of \eqref{curvexact} equals
\begin{multline*}
x^2\Big(nh^2-\frac{x^2}{4h^2}\Big)+\frac n2\sum_i\beta_i^3+\Big(c(n-s)-\frac{ks}2\Big)x^2
+s\Big(n\big(\tfrac k2-c\big)-\frac{k^2s}{4h^2}\Big)y^2-\frac{ksx^2}{2h^2}y .
\end{multline*}
By Lemma \ref{okumura-y}, the minimum of $\sum\beta_i^3$ with $\beta_j=y$ prescribed is $C_n(x,y)$, and it is attained. This gives \eqref{jointbound}, because $k(s)\in I_{n,c}(h)$. Exactness of the algebraic reduction follows since both minimizations are attained.
\end{proof}

\begin{theorem}\label{opt-gap}
Let $n\geq3$, let $h$ be in \eqref{range}, and assume $x_{\rm opt}>0$. If a connected, closed, two-sided CMC immersion $\Sigma^n\to\QQ_c^n\times\RR$ with normalized mean curvature $h$ satisfies $\sup_\Sigma|\mathscr S_{n,c,h}|<x_{\rm opt}$, then it is congruent to the rotational CMC sphere of mean curvature $h$. Moreover $x_{\rm opt}\geq x_+$ whenever $a_0>0$.
\end{theorem}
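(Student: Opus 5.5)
The plan has two parts: the gap theorem itself, and the comparison $x_{\rm opt}\geq x_+$.

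\emph{The gap statement.} I will mirror the proof of Theorem \ref{main-gap}, using Lemma \ref{joint} in place of \eqref{curvlower}. Combining the integrated identity \eqref{main-int} with the uniform defect bound \eqref{error1} and with \eqref{jointbound} gives
\[
0\geq\int_\Sigma\Big\{|\nabla B|^2+|B|^2\Big(nh^2-\frac{\varepsilon}{h^2}-\frac{|B|^2}{4h^2}\Big)+\Theta(|B|)\Big\}=\int_\Sigma\big\{|\nabla B|^2+\Pi(|B|)\big\}.
\]
Put $M=\sup_\Sigma|B|<x_{\rm opt}$. By the definition of $x_{\rm opt}$, $\Pi>0$ on $(0,M]$. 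To get a uniform coercive bound I use the factorization $\Pi(x)=x^2\tilde\Pi(x)$ with $\tilde\Pi$ continuous. Since $\tilde\Pi>0$ on $(0,M]$, one still needs $\tilde\Pi(0)>0$ to apply compactness on $[0,M]$. If $\tilde\Pi(0)=0$, I argue as follows: $\tilde\Pi(x)>0$ for small $x>0$ forces the limit to be $\geq0$. This is delicate, so instead I will verify $\tilde\Pi(0)>0$ directly. We have $\tilde\Theta(0)=\min\big[c(n-s)-\tfrac{ks}{2}+s(n(\tfrac k2-c)-\tfrac{k^2s}{4h^2})\eta^2\big]$, and this matches the $x$-independent part of the lower bound in Lemma \ref{curvterm}, namely $-nc_--\frac{(n-1)\mathcal K^2}{4nh^2}$. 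Hence $\tilde\Pi(0)\geq a_0$ when $a_0>0$.

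In general, $x_{\rm opt}>0$ makes $\tilde\Pi$ positive on $(0,M]$. If $\tilde\Pi(0)$ happens to be $0$, I will use the slack term $\mathfrak e$ of Theorem \ref{main-gap}, which comes from $|k(s)|s<\mathcal K$ strictly versus the endpoint $F_0$ at $s=1$. Granting $\varepsilon_*:=\min_{[0,M]}\tilde\Pi>0$, we obtain $\int\varepsilon_*|B|^2\leq0$, so $B\equiv0$, and Theorem \ref{main-equality} finishes the argument.

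\emph{The comparison $x_{\rm opt}\geq x_+$.} The first step is to show that $\Theta(x)\geq x^2\big(-nc_--\frac{(n-1)\mathcal K^2}{4nh^2}\big)-a_1x^3+\frac{\varepsilon}{h^2}\cdot0$. This holds because for each admissible $(y,s,k)$ the quantity $G$ dominates the separate estimates used in Lemma \ref{curvterm}. The relevant inputs are $C_n(x,y)\geq-\frac{n-2}{\sqrt{n(n-1)}}x^3$ (Okumura), the bounds $|y|\leq\eta_0x$ and $|k|\leq\mathcal K$, and the sign analysis of the second group. These go through verbatim with $|BT|^2$ replaced by $sy^2$ and $\ip{BT}{T}$ replaced by $sy$, and the interval $I_{n,c}(h)$ lies within the allowed $k$-ranges. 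Consequently $\Pi(x)\geq x^2P(x)$, with $P$ as in the proof of Theorem \ref{main-gap}. Since $P>0$ on $[0,x_+)$, we get $\Pi>0$ on $(0,x_+)$, hence $x_{\rm opt}\geq x_+$. This also supplies $\tilde\Pi(0)\geq a_0>0$ in that case.

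\emph{Main obstacle.} The delicate point is making sure $\tilde\Pi$ is bounded away from zero on $[0,M]$ near $x=0$ when $a_0\leq0$ but $x_{\rm opt}>0$. I would handle this by noting that $\tilde\Pi(0)$ is independent of the cubic terms, and then checking that $\tilde\Pi(0)=0$ combined with $\tilde\Pi>0$ nearby is incompatible with the vertex formula \eqref{Theta-vertices}. Failing that, I would fall back on the strict slack $\mathfrak e$.
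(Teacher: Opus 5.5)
Your comparison argument is correct and is the paper's: every inequality behind \eqref{curvlower} holds for all $k\in I_{n,c}(h)$ (these satisfy $|k|\leq\mathcal K$, and $k\geq0$ if $c=1$). Hence $\Pi(x)\geq x^2P(x)$ and $x_{\rm opt}\geq x_+$.

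The rigidity part, however, is not closed as written. You reduce it to a uniform constant $\varepsilon_*=\min_{[0,M]}\tilde\Pi>0$, but you prove $\tilde\Pi(0)>0$ only when $a_0>0$. Under the general hypothesis $x_{\rm opt}>0$ you simply grant it, and neither of your fallbacks is carried out. The slack $\mathfrak e$ of Theorem \ref{main-gap} has no counterpart here. It arose from replacing $k^2s^2$ by $\mathcal K^2$ in \eqref{curvlower}, a step $\Theta$ never makes: $\Theta$ keeps $-\frac{k^2s^2}{4h^2}y^2$ exactly and minimizes over $s\in[0,1]$ and $k\in I_{n,c}(h)$.

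The missing observation is that no uniform coercivity is needed. The function $|B|$ is continuous with values in $[0,M]\subset[0,x_{\rm opt})$, and $\Pi(0)=0$ while $\Pi>0$ on $(0,x_{\rm opt})$. So $|\nabla B|^2+\Pi(|B|)$ is a continuous nonnegative function with nonpositive integral, hence it vanishes identically. Thus $\Pi(|B|)\equiv0$, and therefore $B\equiv0$. That is the paper's entire argument.

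Your own route can also be completed. The function $\tilde G(\eta,s,k;0)$ is even in $\eta$, and $x\mapsto\tilde G(\eta,s,k;x)$ is affine with slope $\frac n2C_n(1,\eta)-\frac{ks\eta}{2h^2}$; the slopes at $\pm\eta$ sum to $-n\,c_{\bar n}\varrho^3\leq0$. Take a minimizer $(\eta^*,s^*,k^*)$ at $x=0$ and its reflection $(-\eta^*,s^*,k^*)$, which is also a minimizer. Averaging the two affine functions gives $\tilde\Theta(x)\leq\tilde\Theta(0)$, hence $\tilde\Pi(x)\leq\tilde\Pi(0)-x^2/(4h^2)$. So $x_{\rm opt}>0$ already forces $\tilde\Pi(0)>0$, and compactness of $[0,M]$ then gives $\varepsilon_*>0$.
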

This is the part of Theorem \ref{main-gap} concerning $x_{\rm opt}$. The positivity assumption is automatic when $a_0>0$; if $x_{\rm opt}=0$, the pinching condition is empty. The inequality $|B|<x_{\rm opt}$ is strict. The slack argument used for $x_+$ in Theorem \ref{main-gap} is not transferred to $x_{\rm opt}$.
\begin{proof}
By \eqref{main-int}, \eqref{error1} and \eqref{jointbound}, $0\geq\int_\Sigma\{|\nabla B|^2+\Pi(|B|)\}$, and $\Pi(|B|)>0$ wherever $B\neq0$. Hence $B\equiv0$, and Theorem \ref{main-equality} concludes.

For the comparison, recall that every inequality used in the proof of \eqref{curvlower} holds for all admissible $B$, $T$, and all $k$ with $|k|\leq \mathcal K$ (and $k\geq0$ if $c=1$). This includes all $k\in I_{n,c}(h)$. Hence $\Pi(x)\geq x^2P(x)$, and $P>0$ on $(0,x_+)$.
\end{proof}

\subsection*{Asymptotics of \texorpdfstring{$x_{\rm opt}$}{x opt}}
Put
\[
\begin{gathered}
\eta_0=\sqrt{\tfrac{n-1}{n}},\quad \eta_1=\frac1{\sqrt{n(n-1)}},\quad o_n=\frac{n-2}{\sqrt{n(n-1)}},\\
\lambda=2\sqrt{\tfrac{n}{n-1}},\quad \varpi=\frac{n\sqrt n}{2\sqrt{n-1}},
\end{gathered}
\]
and $\hat C(\eta)=C_n(1,\eta)$ for $|\eta|\leq\eta_0$. Then $C_n(x,x\eta)=x^3\hat C(\eta)$.

\begin{lemma}\label{Chat}
$\min\hat C=-o_n$, and the minimum is attained exactly at $\eta=-\eta_0$ and $\eta=\eta_1$.
\end{lemma}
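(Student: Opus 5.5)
The plan is to avoid the explicit formula for $C_n$ and work with the variational characterization in Lemma \ref{okumura-y}. Fix $\eta$ with $|\eta|\leq\eta_0$ and consider unit vectors $\beta\in\RR^n$ with $\sum_i\beta_i=0$, $\sum_i\beta_i^2=1$ and $\beta_n=\eta$. Then $\hat C(\eta)=C_n(1,\eta)$ is the minimum of $\sum_i\beta_i^3$ over this set. Taking the minimum over $\eta$ as well, $\min_\eta\hat C$ is the unconstrained minimum of $\sum_i\beta_i^3$ over the unit trace-free sphere in $\RR^n$. The constraint $|\eta|\leq\eta_0$ is automatic for such $\beta$, since $\max_i\beta_i^2\leq\frac{n-1}{n}$. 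By Okumura's inequality in the form \cite[Lemma (2.6)]{AdC}, this minimum equals $-o_n$. It is attained exactly at vectors with $n-1$ equal nonnegative entries, i.e. at permutations of
\[
\beta^*=(\eta_1,\dots,\eta_1,-\eta_0).
\]
Indeed, $(n-1)\eta_1=\eta_0$, $(n-1)\eta_1^2+\eta_0^2=1$, and a direct check gives $\sum(\beta_i^*)^3=(n-1)\eta_1^3-\eta_0^3=-o_n$.

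For the attainment statement, $\hat C(\eta)=-o_n$ holds if and only if some admissible $\beta$ with $\beta_n=\eta$ realizes the global minimum. By the equality case above, this happens if and only if $\eta$ is an entry of a permutation of $\beta^*$, i.e. $\eta\in\{-\eta_0,\eta_1\}$. For the converse direction one only needs that the minimum defining $C_n(1,\eta)$ is attained (Lemma \ref{okumura-y}). For each of these two values of $\eta$ one exhibits the corresponding $\beta^*$. For $\eta=-\eta_0$ one can also check directly: $\varrho=0$, and $C_n(1,-\eta_0)=-(1-\bar n^{-2})\eta_0^3$. This should reduce to $-o_n$ via $\eta_0^3(1-1/(n-1)^2)=\frac{(n-1)^{3/2}}{n^{3/2}}\cdot\frac{n(n-2)}{(n-1)^2}=o_n$.

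The main point needing care is the equality case of Okumura's inequality on the full sphere in $\RR^n$. One must make sure that ``$n-1$ equal nonnegative entries'' pins down exactly $\beta^*$ up to permutation. Once the $n-1$ equal entries $e\geq0$ are fixed, the trace and norm constraints force $e=\eta_1$ and the remaining entry $-\eta_0$. The sign condition rules out the reflected configuration $(-\eta_1,\dots,-\eta_1,\eta_0)$, which has cube sum $+o_n$. With that in hand, no further computation beyond the identity $\sum(\beta_i^*)^3=-o_n$ is required.
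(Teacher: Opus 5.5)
Your proposal is correct and takes the same approach as the paper: it identifies $\min_\eta\hat C$ with the minimum of $\sum_i\beta_i^3$ on the unit trace-free sphere via Lemma \ref{okumura-y}, applies Okumura's inequality with the equality case of \cite[Lemma (2.6)]{AdC}, and reads off $\eta\in\{-\eta_0,\eta_1\}$ as the entries of the extremal vector. The only difference is cosmetic: for the converse you exhibit $\beta^*$ directly and check $C_n(1,-\eta_0)=-o_n$ by hand, whereas the paper appeals to the equality statement of Lemma \ref{okumura-y}.
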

\begin{proof}
By Lemma \ref{okumura-y}, $\min_\eta\hat C(\eta)$ is the minimum of $\sum\beta_i^3$ over trace-free $\beta$ with $|\beta|=1$. By Okumura's inequality and the equality case in \cite[Lemma (2.6)]{AdC}, this is $-o_n$. Equality holds exactly when $n-1$ of the $\beta_i$ equal $\eta_1$ and the remaining one equals $-\eta_0$. The prescribed eigenvalue $\eta$ is then either $-\eta_0$ or $\eta_1$. Conversely, both values give $\varrho$ and $z$ realizing the extremal configuration, by the equality statement of Lemma \ref{okumura-y}.
\end{proof}

\begin{proof}[Proof of Theorem \ref{main-asymp}(a)]
\emph{Rescaling.} Write $x=\mathfrak x h^2$ and $y=x\eta$. Directly from \eqref{Gdef},
\[
G(y,s,k;x)=x^2\Big[\tfrac n2\,\mathfrak x h^2\,\hat C(\eta)+g(\eta,s,k,\mathfrak x)-\frac{k^2s^2\eta^2}{4h^2}\Big],
\]
where
\[
g=c(n-s)-\frac{ks}2+sn\Big(\frac k2-c\Big)\eta^2-\frac{ks\mathfrak x\eta}{2}.
\]
Hence $\Pi(\mathfrak x h^2)=\mathfrak x^2h^6\,\mathcal F_h(\mathfrak x)$, with
\[
\mathcal F_h(\mathfrak x)=L(\mathfrak x)+h^{-2}\mathcal G_h(\mathfrak x),\qquad L(\mathfrak x)=n-\frac{\mathfrak x^2}4-\frac{no_n}{2}\mathfrak x,
\]
\[
\mathcal G_h(\mathfrak x)=\min_{\eta,s,\,k\in I_{n,c}(h)}\Big[\tfrac n2\mathfrak x h^2\big(\hat C(\eta)+o_n\big)+g(\eta,s,k,\mathfrak x)-\frac{k^2s^2\eta^2}{4h^2}\Big]-\frac{\varepsilon}{h^2}.
\]
A direct computation gives $L(\lambda)=0$ and $L'(\lambda)=-\varpi<0$. Moreover $L$ is decreasing on $(0,\infty)$.

\emph{Step 1: $\mathcal G_h\to M$ uniformly on compact subsets $J\subset(0,\infty)$,} where
\[
M(\mathfrak x)=\min\{g(\eta,s,F_0,\mathfrak x):\ \eta\in\{-\eta_0,\eta_1\},\ s\in[0,1]\}.
\]
As $h\to\infty$, $\sigma_{n,c}(h)\to0$, so $\varepsilon$ stays bounded, and $I_{n,c}(h)\subset[F_0-Q/(n^2h^2),F_0]$.

For the upper bound, take $\eta\in\{-\eta_0,\eta_1\}$, $k=F_0$ and $s$ minimizing. This gives $\mathcal G_h\leq M+O(h^{-2})$.

For the lower bound, let $\ell$ be a Lipschitz constant of $g$ in $(\eta,k)$ on the relevant compact set, and let $\mathfrak x_J=\min J>0$. Fix $\delta>0$ and put $\omega(\delta)=\min\{\hat C(\eta)+o_n:\ \operatorname{dist}(\eta,\{-\eta_0,\eta_1\})\geq\delta\}$. By Lemma \ref{Chat}, $\omega(\delta)>0$. There are two cases.
\begin{itemize}
\item If $\operatorname{dist}(\eta,\{-\eta_0,\eta_1\})<\delta$, then the first term is nonnegative, and $g(\eta,s,k,\mathfrak x)\geq M(\mathfrak x)-\ell(\delta+Q/(n^2h^2))$.
\item Otherwise the bracket is at least $\frac n2\mathfrak x_Jh^2\omega(\delta)-\sup|g|-O(h^{-2})$, which exceeds $M(\mathfrak x)$ for $h$ large.
\end{itemize}
Hence $\liminf_h\inf_J(\mathcal G_h-M)\geq-\ell\delta$ for every $\delta>0$.

\emph{Step 2: the value $M(\lambda)$.} The function $g(\eta,s,F_0,\lambda)=cn+s\,\kappa(\eta)$ is affine in $s$, with
\[
\kappa(\eta)=-c-\frac{F_0}2+n\Big(\frac{F_0}2-c\Big)\eta^2-\frac{F_0\lambda\eta}2 .
\]
Using $\lambda\eta_0=2$, $\lambda\eta_1=2/(n-1)$ and $F_0=2nc/(n+2)$,
\[
\kappa(-\eta_0)=-\frac{2nc}{n+2},\qquad \kappa(\eta_1)=-\frac{2n(n+1)c}{(n-1)(n+2)} .
\]
Therefore
\[
M(\lambda)=cn+\min\{0,\kappa(-\eta_0),\kappa(\eta_1)\}=
\begin{cases}\dfrac{n(n^2-n-4)}{(n-1)(n+2)},&c=1,\\[6pt] -n,&c=-1,\end{cases}
\]
and $M(\lambda)/\varpi=\mu_c$.

\emph{Step 3: the first zero.} Since $L$ is quadratic, $L(\lambda+t)=-dt-t^2/4$. The constants are chosen in the following order.
\begin{enumerate}
\item Fix $\iota\in(0,1)$ and $\delta\in(0,\min\{\lambda/2,\varpi\})$.
\item By Step 1 with $J=[\lambda/2,\lambda+\delta]$, there are $h_1$ and $C_1$ with $|\mathcal G_h|\leq C_1$ on $J$ for $h\geq h_1$.
\item Choose $\Upsilon>\max\{C_1/(\varpi-\delta/4),\ |\mu_c|+1\}$.
\item Choose $h_2\geq h_1$ such that, for $h\geq h_2$, all of the following hold: $\Upsilon/h^2<\delta$; $\sup_J|\mathcal G_h-M|<\varpi\iota/3$; $|M(\mathfrak x)-M(\lambda)|<\varpi\iota/3$ for $|\mathfrak x-\lambda|\leq \Upsilon/h^2$; $\Upsilon^2/(4h^2)<\varpi\iota/3$; and $L(\lambda-\delta)>C_2/h^2$, with $C_2$ as in the next paragraph.
\end{enumerate}
The last requirement uses the following bound. For $\mathfrak x\in(0,\lambda-\delta]$, the term $\frac n2\mathfrak x h^2(\hat C+o_n)$ is nonnegative, $g$ is bounded on $[-\eta_0,\eta_0]\times[0,1]\times I_{n,c}(h)\times(0,\lambda]$, and $\varepsilon$ is bounded. So $\mathcal G_h\geq-C_2$ there, and $\mathcal F_h\geq L(\lambda-\delta)-C_2/h^2>0$ on $(0,\lambda-\delta]$.

Let $h\geq h_2$ and write $\mathfrak x=\lambda+\tau/h^2$. Then $h^2\mathcal F_h(\mathfrak x)=-\varpi\tau-\tau^2/(4h^2)+\mathcal G_h(\mathfrak x)$.
\begin{itemize}
\item For $-\delta h^2\leq\tau\leq-\Upsilon$ we have $\tau^2/(4h^2)\leq\delta|\tau|/4$. Hence $h^2\mathcal F_h\geq(\varpi-\delta/4)|\tau|-C_1>0$.
\item For $|\tau|\leq \Upsilon$ we have $h^2\mathcal F_h(\mathfrak x)=-\varpi\tau+M(\lambda)+\vartheta$ with $|\vartheta|<\varpi\iota$. Since $M(\lambda)=\varpi\mu_c$, this is positive for $\tau\leq\mu_c-\iota$ and negative for $\tau=\mu_c+\iota$.
\end{itemize}
Thus $\mathcal F_h>0$ on $(0,\lambda+(\mu_c-\iota)/h^2]$ and $\mathcal F_h(\lambda+(\mu_c+\iota)/h^2)<0$. Since $\Pi(\mathfrak x h^2)=\mathfrak x^2h^6\mathcal F_h(\mathfrak x)$ and $x_{\rm opt}$ is the first positive zero of $\Pi$, we get $|x_{\rm opt}-\lambda h^2-\mu_c|\leq\iota$ for all $h\geq h_2$. Since $\iota$ is arbitrary, the theorem follows.
\end{proof}

\begin{proposition}\label{nonconvex}
Let $c=1$ and $n\geq3$. For all sufficiently large $h$ there are a trace-free symmetric $B$ with $|B|<x_{\rm opt}$ and a unit vector $e$ with the following property. For every $T\perp e$ with $|T|^2=s\in[0,1]$ and every $k\geq0$, the operator $A=h\Id+\frac{1}{2h}B+\frac{k}{2h}(T\otimes T-\frac sn\Id)$ satisfies $\ip{Ae}{e}<0$.
\end{proposition}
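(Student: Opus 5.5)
We construct $B$ explicitly, mirroring the proof of Proposition \ref{convex-criterion}(ii). Fix a unit vector $e$ and, for $x>0$, set
\[
B_x=x\big(-\eta_0\,e\otimes e+\eta_1(\Id-e\otimes e)\big),
\]
which is trace-free with $|B_x|=x$ and $\ip{B_xe}{e}=-\eta_0x$.

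For $T\perp e$ with $|T|^2=s$ we have $\ip{Te}{e}=0$, so $\ip{(T\otimes T-\frac sn\Id)e}{e}=-s/n\le 0$. Since $k\ge0$, it follows that
\[
\ip{Ae}{e}=h-\frac{\eta_0x}{2h}-\frac{ks}{2hn}\le h-\frac{\eta_0x}{2h}.
\]
This bound is uniform in $T\perp e$, $s\in[0,1]$ and $k\ge0$. Hence $\ip{Ae}{e}<0$ as soon as $x>2h^2/\eta_0$. Note that $2h^2/\eta_0=\lambda h^2$, because $\lambda=2\sqrt{n/(n-1)}=2/\eta_0$.

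It remains to choose $x$ with $\lambda h^2<x<x_{\rm opt}$. By Theorem \ref{main-asymp}(a), $x_{\rm opt}=\lambda h^2+\mu_1+o(1)$ as $h\to\infty$. Moreover $\mu_1=2(n^2-n-4)/((n+2)\sqrt{n(n-1)})>0$ for $n\ge3$, since $n^2-n-4\ge2$. So for all sufficiently large $h$ we have $x_{\rm opt}>\lambda h^2+\mu_1/2$, and the choice $x=\lambda h^2+\mu_1/4$ satisfies both $x<x_{\rm opt}$ and $x>\lambda h^2$. With $B=B_x$ the statement follows.

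The only nontrivial input is the positivity of the second-order shift $\mu_1$, which is already provided by Theorem \ref{main-asymp}(a). Everything else is the elementary eigenvalue computation above. The point to check carefully is that the sign of the $k$-term is favorable, that is $-ks/(2hn)\le0$. This uses both $k\ge0$ and the restriction $T\perp e$. For $T$ parallel to $e$ the sign would reverse, which explains why the statement restricts to $T\perp e$.
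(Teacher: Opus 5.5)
Your proposal is correct and follows essentially the paper's own proof. You use the same extremal $B$ (eigenvalue $-\eta_0x$ on $e$, $\eta_1x$ on $e^\perp$), the same computation $\ip{Ae}{e}=h-\frac{\eta_0x}{2h}-\frac{ks}{2hn}$ using $\eta_0\lambda=2$, and the same appeal to $\mu_1>0$ in Theorem \ref{main-asymp}(a); the only difference is that the paper takes $x=\lambda h^2+\mu_1/2$ where you take $\mu_1/4$, which is immaterial.
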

\begin{proof}
Let $x=\lambda h^2+\mu_1/2$. By Theorem \ref{main-asymp}(a) and $\mu_1>0$, $x<x_{\rm opt}$ for large $h$. Take $B$ with eigenvalue $-\eta_0x$ on $e$ and $\eta_1x$ on $e^\perp$. Since $\eta_0\lambda=2$,
\[
\ip{Ae}{e}=h-\frac{\eta_0x}{2h}-\frac{ks}{2nh}=-\frac{\eta_0\mu_1}{4h}-\frac{ks}{2nh}<0 .
\]
\end{proof}

\begin{remark}\label{opt-remark}
Proposition \ref{nonconvex} is a statement about algebraic data at a point; see Remark \ref{algebraic-remark}. For $c=-1$, $\mu_{-1}<0$; this reflects the ambient term $c(n-s)|B|^2$, which is unfavorable in that case.
\end{remark}

\subsection*{Asymptotics of \texorpdfstring{$R_*$}{R*} and comparison with convexity}
We keep the notation $\eta_0$, $o_n$, $\lambda$ and $\varpi$ introduced above, and put $p_0=\frac n2o_n=\frac{n(n-2)}{2\sqrt{n(n-1)}}$. Then
\begin{equation}\label{dp0}
\varpi^2=p_0^2+n,\qquad \varpi-p_0=\frac1{\eta_0}=\frac\lambda2 .
\end{equation}
Indeed, $\varpi^2-p_0^2=\frac{n^3-n(n-2)^2}{4(n-1)}=n$ and $\varpi-p_0=\frac{\sqrt n\,(n-(n-2))}{2\sqrt{n-1}}=\sqrt{\frac n{n-1}}$.

\begin{proof}[Proof of Theorem \ref{main-asymp}{(b)--(c)}]
Let $h\to\infty$; then $h$ lies in \eqref{gap-range}. In this proof $C$ denotes constants depending only on $n$, and every $O(\cdot)$ is uniform in $s\in[0,1]$.

\emph{Step 1: the coefficient.} By Lemma \ref{bounds}(d), $F_0-Qw^2\leq F_c<F_0$ for $r>0$. Since $w\leq1/(nh)$ on $(0,r_0]$,
\begin{equation}\label{k-uniform}
0\leq F_0-k(s)\leq\frac{Q}{n^2h^2}\qquad(0\leq s\leq1).
\end{equation}
By Lemma \ref{bounds}(e), $|sk'|\leq\sigma_{n,c}(h)=O(h^{-2})$. By \eqref{alphaeta}--\eqref{Lambda} and $|k|\leq \mathcal K$, $0\leq\Lambda\leq C$.

\emph{Step 2: expansion of $q_{n,c,h}$ and $\mathcal P_{n,c,h}$.} By \eqref{ab},
\[
ab=h^2+\frac{(n-2)ks}{2n}-\frac{(n-1)k^2s^2}{4n^2h^2},\qquad
b^2=h^2-\frac{ks}{n}+\frac{k^2s^2}{4n^2h^2}.
\]
Hence \eqref{qPdef}, Step 1 and \eqref{k-uniform} give
\[
q_{n,c,h}(s)=nh^2+\mathfrak q_c(s)+O(h^{-2}),
\]
where
\[
\mathfrak q_1(s)=n(1-s)+\frac{(n-2)F_0}{2}s,\qquad
\mathfrak q_{-1}(s)=-n+s-\frac{F_0}{2}s .
\]
For $c=-1$ we used $ab+(n-1)b^2=nh^2-\frac{ks}2+O(h^{-2})$. Similarly
\[
\mathcal P_{n,c,h}(s)=p_0+\frac{\gamma(s)}{h^2}+O(h^{-4}),\qquad \gamma(s)=\frac{|F_0|\,\eta_0\,s}{2}.
\]

\emph{Step 3: the threshold $R_*$.} Let $R_h(s)$ be the expression inside the minimum in \eqref{radius}. Write $\mathcal P_{n,c,h}=p_0+\epsilon_1$ and $q_{n,c,h}/h^2=n+\epsilon_2$, where $\epsilon_1=\gamma/h^2+O(h^{-4})$ and $\epsilon_2=\mathfrak q_c/h^2+O(h^{-4})$. Since $p_0^2+n=\varpi^2>0$ by \eqref{dp0}, Taylor's formula for the square root gives
\[
\sqrt{(p_0+\epsilon_1)^2+n+\epsilon_2}=\varpi+\frac{2p_0\epsilon_1+\epsilon_2}{2\varpi}+O(h^{-4}).
\]
Therefore, using \eqref{dp0},
\begin{align*}
R_h(s)&=2h^2\Big(\varpi-p_0+\frac{2p_0\epsilon_1+\epsilon_2}{2\varpi}-\epsilon_1\Big)+O(h^{-2})\\
&=\lambda h^2+\frac{\mathfrak q_c(s)-2\gamma(s)(\varpi-p_0)}{\varpi}+O(h^{-2}),
\end{align*}
that is,
\[
R_h(s)=\lambda h^2+\frac{\psi_c(s)}{\varpi}+O(h^{-2}),\qquad \psi_c(s)=\mathfrak q_c(s)-|F_0|\,s .
\]
Since the error is uniform in $s$, $R_*=\min_sR_h(s)=\lambda h^2+\min_{[0,1]}\psi_c/\varpi+O(h^{-2})$.

For $c=1$, $F_0=2n/(n+2)$ and $\psi_1(s)=n-\frac{6n}{n+2}s$, which is decreasing. Its minimum is $\psi_1(1)=\frac{n(n-4)}{n+2}$, and
\[
\frac{\psi_1(1)}\varpi=\frac{n(n-4)}{n+2}\cdot\frac{2\sqrt{n-1}}{n\sqrt n}=\frac{2(n-4)(n-1)}{(n+2)\sqrt{n(n-1)}}=\rho_1 .
\]
For $c=-1$, $F_0=-2n/(n+2)$ and $\psi_{-1}(s)=-n+\frac{2}{n+2}s$, which is increasing. Its minimum is $\psi_{-1}(0)=-n$, and $-n/\varpi=-2\sqrt{(n-1)/n}=\mu_{-1}$.

\emph{Step 4: the convexity level.} For $c=1$, $k\geq0$ by Lemma \ref{bounds}(d), so $a-b=\frac{ks}{2h}\geq0$ and $\mathfrak m=b=h-\frac{ks}{2hn}$. By \eqref{k-uniform}, $0\leq ks\leq F_0$ and $k(1)\geq F_0-Q/(n^2h^2)$. Hence $\min_{[0,1]}\mathfrak m=h-\frac{F_0}{2hn}+O(h^{-3})$ and
\begin{align*}
X_{\rm conv}&=\frac{2h}{\eta_0}\min_{[0,1]}\mathfrak m=\lambda h^2-\frac{F_0}{n\eta_0}+O(h^{-2})\\
&=\lambda h^2-\frac{2\sqrt n}{(n+2)\sqrt{n-1}}+O(h^{-2}).
\end{align*}
For $c=-1$, $k<0$, so $a\leq b$ and $\mathfrak m=a=h-\frac{(n-1)|k|s}{2hn}$. By \eqref{k-uniform}, $|F_0|\leq|k|\leq|F_0|+Q/(n^2h^2)$, so $\max_s|k|s=|F_0|+O(h^{-2})$. Hence
\[
X_{\rm conv}=\lambda h^2-\frac{(n-1)|F_0|}{n\eta_0}+O(h^{-2})=\lambda h^2-\frac{2\sqrt{n(n-1)}}{n+2}+O(h^{-2}).
\]

\emph{Step 5: consequences.} For $c=1$,
\[
\rho_1+\frac{2\sqrt n}{(n+2)\sqrt{n-1}}=\frac{2[(n-4)(n-1)+n]}{(n+2)\sqrt{n(n-1)}}=\frac{2(n-2)^2}{(n+2)\sqrt{n(n-1)}},
\]
and, by part (a),
\[
\mu_1-\rho_1=\frac{2[(n^2-n-4)-(n-4)(n-1)]}{(n+2)\sqrt{n(n-1)}}=\frac{8(n-2)}{(n+2)\sqrt{n(n-1)}} .
\]
Thus $R_*>X_{\rm conv}$ for all large $h$, consistently with Theorem \ref{main-compare}(ii). For $c=-1$,
\[
-\frac{2\sqrt{n(n-1)}}{n+2}-\mu_{-1}=2\sqrt{n-1}\Big(\frac1{\sqrt n}-\frac{\sqrt n}{n+2}\Big)=\frac{4\sqrt{n-1}}{(n+2)\sqrt n}>0,
\]
and $x_{\rm opt}-R_*=\mu_{-1}-\mu_{-1}+o(1)=o(1)$ by part (a). The sign of $X_{\rm conv}-R_*$ for $c=-1$ agrees with Theorem \ref{main-compare}(i), which holds for every $h>1$.
\end{proof}

\section*{Funding}
H. Alencar and M. Batista were partially supported by the Brazilian National Council for Scientific and Technological Development -- CNPq, Brazil (grants 303118/2022-9 and 304381/2026-8, respectively), and by the Coordination for the Improvement of Higher Education Personnel -- CAPES, Brazil (Finance Code 001).

\section*{Data availability statement}
This manuscript has no associated data.
\section*{Conflict of interest statement}
The authors declare that they have no conflict of interest.

\section*{AI disclosure}
A generative AI tool was used to assist with language editing, organization of the exposition, and discussion and checking of selected algebraic calculations. The authors take full responsibility for the mathematical statements, proofs, and references.

\end{document}